\numberwithin{equation}{section}
\newtheorem{theorem}{Theorem}[section]
\newtheorem{lemma}[theorem]{Lemma}
\newtheorem{corollary}[theorem]{Corollary}
\newtheorem{proposition}[theorem]{Proposition}
\theoremstyle{definition}
\newtheorem{remark}[theorem]{Remark}
\newtheorem{example}[theorem]{Example}
\newcommand{\cI}{\mathcal{I}}
\newcommand{\cO}{\mathcal{O}}
\renewcommand{\P}{\mathbb{P}}
\DeclareMathOperator{\Hom}{Hom}
\DeclareMathOperator{\rk}{rk}
\DeclareMathOperator{\coker}{coker}
\DeclareMathOperator{\Ext}{Ext}
\DeclareMathOperator{\sHom}{\mathcal{H}\kern -.5pt\mathit{om}}
\DeclareMathOperator{\sTor}{\mathcal{T}\kern -1.5pt\mathit{or}}
\DeclareMathOperator{\OG}{OG}
\DeclareMathOperator{\sEnd}{\mathcal{E}\kern -.5pt\mathit{nd}}
\DeclareMathOperator{\sExt}{\mathcal{E}\kern -.5pt\mathit{xt}}
\begin{document}

\title{Restricted tangent bundle of rational curves on projective hypersurfaces}
\author{Lucas Mioranci}
\address{Instituto de Matemática Pura e Aplicada, Estrada Dona Castorina 110, Jardim Botânico, 22460-320 Rio de Janeiro, RJ, Brazil}
\email{lucas.mioranci@impa.br}
\date{}
\subjclass[2020]{Primary: 14H60, 14J45, 14J70; Secondary: 14G17, 14N25, 14Q15}
\keywords{Rational curves, restricted tangent bundles, hypersurfaces, curve interpolation}
\thanks{During the preparation of this article, the author was supported by CNPq Post-doctoral fellowship 152039/2024-4}

\begin{abstract}
    We determine all triples $(e,d,n)$ for which a general degree $d$ hypersurface $X\subset \P^n$ contains a degree $e$ rational curve $C$ with balanced restricted tangent bundle $T_X|_C$. In addition, we show how to compute explicit examples of hypersurfaces with balanced $T_X|_C$ when $C$ is a rational normal curve.
\end{abstract}

\maketitle

\section{Introduction}

The normal and restricted tangent bundles of a curve on a variety give fundamental information on the local structure of the space of its deformations. They tell the dimension of the space of curves and the freedom we have in deforming the curve on the variety. In a sense, curves with a ``more balanced" normal and restricted tangent bundles are the ``most free" ones and the ones whose deformations interpolate the maximum number of points. In this paper, we show when general projective hypersurfaces of degree $d$ in $\P^n$ have degree $e$ rational curves with balanced restricted tangent bundle. We work over an algebraically closed field $k$ of characteristic $p$ that does not divide the degree $e$ of the curve.

By the Birkhoff-Grothendieck theorem, a vector bundle $E$ on $\P^1$ splits as a direct sum of line bundles, $E = \bigoplus_{i=1}^{r}\cO_{\P^1}(a_i)$ for integers $a_1\le \cdots \le a_r$. The collection $\{a_i\}$ is called the \textit{splitting type} of $E$. The vector bundle is called \textit{balanced} if $|a_i-a_j|\le 1$ for all $1\le i,j\le r$, and \textit{perfectly balanced} if all $a_i$ are equal. We remark that there exists a unique balanced splitting type for vector bundles of a given rank and degree. Also, being balanced is an open condition in a family of vector bundles (see Section \ref{section_vector_bundles}).

Let $X$ be a smooth degree $d$ hypersurface in $\P^n$ containing a smooth rational curve $C$ of degree $e$. There has been great interest in describing the possible splitting types of the normal bundle $N_{C/X}$ and the restricted tangent bundle $T_X|_C$. For $X=\P^n$, there is a long history of works describing the space of curves having a fixed normal or restricted tangent bundle, see \cite{GS80,Sa80,Sa82,EV81,EV82,M86,Ascenzi_restricted_tangent_in_P2,Ran,Gimigliano_plane_rational_tangent_bundle,AlzatiRe,AlzatiReTortora,CR18,Ascenzi_restricted_tangent_in_P3,Larson_Vogt_Interpolation_Brill_Noether_curves}. A fair amount of work is done for rational curves and their interpolation properties in more general varieties $X$, especially projective complete intersections and Grassmannians, see \cite{Kollar_rational_curves_on_algebraic_varieties,AlzatiRe_PGL2_actions_Grassmannians,Fur16,L21,Interpolation_rational_scrolls,Ran_low_degree,Ran_balanced_curves,Ran_Regular_and_rigid_on_Calabi_Yau}. These questions can be generalized to higher genus curves, and have been studied in $\P^n$ in \cite{Ellingsrud_and_Laksov_elliptic_space_curves,Hulek_Projective_geometry_of_elliptic_curves,Perrin_Courbes_Passant,Ein_Lazarsfeld_Stability_and_restrictions_of_Picard_bundles,Hein_Kurke_restricted_tangent_bundle_space_curves,Hein_Curves_P3_with_good_restriction_tangent_bundle,E_Larson_interpolation_restricted_tangent_general_curves,Atanasov_Larson_Yang_Interpolation_for_normal_bundles}, for Fano hypersurfaces in \cite{Interpolation_of_curves_on_Fano_hypersurfaces} and for Grassmannians in \cite{Ballico_Ramella_restricted_tangent_in_Grassmannians,Coskun_Larson_Vogt_normal_bundles_Grassmannians}.

In \cite{CR}, Coskun and Riedl show that a general Fano hypersurface of degree $d\ge 2$ in $\P^n$ contains rational curves of degree $e$ with a balanced normal bundle for every degree $1\le e\le n$. This result was later extended by Ran \cite{Ran_low_degree} for degrees $1\le e\le 2n-2$ and $d\ge 4$. A description of all possible splitting types for the normal bundle when $C$ is a rational normal curve and $X$ is a projective hypersurface, in addition to the space of hypersurfaces inducing each splitting type, has been done for lines in \cite{L21} and higher-degree curves in \cite{Mioranci_Normal_Bundle}.

For the restricted tangent bundle, however, much less is known. In \cite{Interpolation_of_curves_on_Fano_hypersurfaces}, Ran, working in arbitrary genus and Fano projective hypersurfaces, shows the existence of curves with balanced restricted tangent bundles for large degrees $e$ in some arithmetic progressions, and conjectures the existence of obstructions in terms of degree and genus for the existence of such curves. He highlights the \textit{modular interpolation} property of the tangent bundle: for rational curves $f:\P^1\to X$, a restricted tangent bundle $f^*T_X\cong \bigoplus_{i=1}^{n-1}\cO(a_i)$ with $a_1\le \cdots \le a_{n-1}$ means that, for $a_1+1$ general points $p_i\in \P^1$ and general points $x_i\in X$, there are deformations $\tilde{f}$ of $f$ such that $\tilde {f}(p_i)=x_i$. For curves $f$ with fixed degree $e$, the expected (and maximum) number of points that can be interpolated as above is achieved when $f^*T_X$ is balanced. In this case, we can interpolate up to $\lfloor \frac{\deg f^*K_X}{n-1} \rfloor + 1 = \lfloor \frac{e(n+1-d)}{n-1} \rfloor + 1$ points (see Section \ref{Section_modular_interpolation}).

In this paper, we determine the existence of degree $e$ rational curves with balanced restricted tangent bundle in general Fano hypersurfaces for every degree $e$. We remark that the restricted tangent bundle is never balanced for non-Fano hypersurfaces (see Proposition \ref{tangent_degree_less_than_2}).

\begin{theorem}(see Proposition \ref{tangent_degree_less_than_2} and Theorem \ref{theorem_higher_degree_curves})
    Let $X\subset \P^n$ be a smooth Fano hypersurface of degree $d$, $3\le d\le n$.
    \begin{enumerate}[itemsep = 5pt]
        \item The restricted tangent bundle $T_X|_C$ of a degree $e\le \frac{n-1}{n+1-d}$ rational curve $C$ on $X$ is never balanced.

        \item A general hypersurface $X$ contains rational curves of degree $e$ with balanced restricted tangent bundle for every $e > \frac{n-1}{n+1-d}$.
    \end{enumerate}
\end{theorem}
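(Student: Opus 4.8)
\emph{Part (1) and the threshold.} The organizing principle for both parts is that $T_X|_C$ always contains the tangent bundle of the curve as a subbundle: since $C\cong\P^1$ is smooth in $X$, the differential of the inclusion gives a subbundle $T_C=\cO_{\P^1}(2)\hookrightarrow T_X|_C$, so a nonzero map $\cO(2)\to T_X|_C$ forces the largest summand $a_{n-1}\ge 2$. On the other hand, from $0\to T_X\to T_{\P^n}|_X\to \cO_X(d)\to 0$ and the Euler sequence one computes $\deg T_X|_C=(n+1)e-de=e(n+1-d)$ with $\rk T_X|_C=n-1$, so in the balanced splitting $a_{n-1}=\lceil e(n+1-d)/(n-1)\rceil$. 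These are compatible with $a_{n-1}\ge 2$ precisely when $\lceil e(n+1-d)/(n-1)\rceil\ge 2$, i.e. $e(n+1-d)>n-1$, i.e. $e>\tfrac{n-1}{n+1-d}$. Hence for $e\le\tfrac{n-1}{n+1-d}$ (and, identically, for $d\ge n+1$, where the degree is non-positive) the balanced type has $a_{n-1}\le 1$, contradicting the $\cO(2)$-subbundle; this is Proposition~\ref{tangent_degree_less_than_2}. So the numerical threshold in the Theorem is exactly the point at which balancedness ceases to be obstructed by $T_C$, and Part (2) asserts this obstruction is the only one for general $X$.

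\emph{Part (2): reduction and criterion.} Because being balanced is an open condition, it suffices to exhibit, for each $e>\tfrac{n-1}{n+1-d}$, a single pair $(X,C)$ with $X$ smooth of degree $d$ and $C$ a degree $e$ smooth rational curve with $T_X|_C$ balanced, together with a deformation argument showing such pairs persist over a dense set of hypersurfaces. Concretely, I would work in the incidence space $\mathcal M$ of pairs $(X,f)$ with $f\colon\P^1\to X$ of degree $e$, note that the balanced locus $\mathcal M^{\mathrm{bal}}\subset\mathcal M$ is open, and show $\mathcal M^{\mathrm{bal}}$ dominates the space of degree $d$ hypersurfaces. The workhorse is the cohomological characterization of balancedness on $\P^1$: writing $q=\lfloor \deg E/\rk E\rfloor$, a bundle $E$ is balanced iff $h^1(E(-q-1))=0$ and $h^0(E(-q-2))=0$. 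Both conditions are semicontinuous, so if I can produce a (possibly nodal) degree $e$ rational curve whose restricted tangent bundle satisfies them, its general smoothing inside $X$ inherits them and is balanced.

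\emph{Part (2): induction by attaching lines.} I would argue by induction on $e$, with the increment realized geometrically: given a balanced degree $e-1$ curve $C'$ on $X$ and a general line $L\subset X$ meeting $C'$ in one node $p$, form $C_0=C'\cup L$ and smooth it. The splitting type of $T_X|_{C_0}$ is controlled by $0\to T_X|_{C_0}\to T_X|_{C'}\oplus T_X|_{L}\to (T_X)_p\to 0$: twisting by the critical powers $\cO(-q-1)$, $\cO(-q-2)$ and chasing cohomology reduces the two vanishings above to the balancedness of $T_X|_{C'}$ (induction) and $T_X|_L$ together with a general-position statement on the gluing at $p$ that forces the evaluation maps to have maximal rank. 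The degree increment per line is $n+1-d$, which is strictly less than the rank $n-1$ precisely because $d\ge 3$; this ensures the added degree is spread across the residues and balance is preserved. The base case is the smallest integer $e^\ast>\tfrac{n-1}{n+1-d}$, and one checks $e^\ast\le n$, so $C$ may be taken to be a rational normal curve; here I would use the explicit construction of a smooth $X$ with balanced $T_X|_C$ promised in the abstract (this is the step where $\operatorname{char}k=p\nmid e$ is used, to guarantee the parametrization is an immersion and the explicit computation is valid).

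\emph{Main obstacle.} The crux is the inductive attaching-and-smoothing step: proving that a \emph{general} line through a \emph{general} point of $C'$ makes the connecting maps in the twisted sequence surjective/injective, so that the nodal bundle achieves the minimal $h^0$ and $h^1$, and simultaneously that the node smooths to a genuine rational curve lying on a \emph{smooth} deformation of $X$ that still varies in a dominating family. Controlling both the gluing cohomology and the dominance of $\mathcal M^{\mathrm{bal}}$ over the hypersurface space—while keeping $X$ smooth throughout the degeneration—is where the real work lies; the explicit rational normal curve base case is the second, more computational, pressure point.
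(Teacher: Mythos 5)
Your part (1) is exactly the paper's argument (Proposition \ref{tangent_degree_less_than_2}): the inclusion $T_C\cong\cO(2)\hookrightarrow T_X|_C$ is incompatible with the balanced splitting type once $\mu(T_X|_C)=e(n+1-d)/(n-1)\le 1$. No issues there.

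Part (2), however, has a genuine gap in the inductive step, and it is one your own part (1) predicts. You propose to increment the degree by attaching a general line $L$ and smoothing. But for $d\ge 3$ (indeed $d\ge 2$) a line satisfies $\mu(T_X|_L)=(n+1-d)/(n-1)\le 1$, so by part (1) $T_X|_L$ is \emph{never} balanced; e.g.\ on a general cubic threefold $T_X|_L\cong\cO^2\oplus\cO(2)$ (Theorem \ref{theorem_cubics}(1)). This kills the cohomological reduction as stated. Concretely, take $d=3$, $n=4$, $C'$ a twisted cubic with $T_X|_{C'}\cong\cO(2)^3$, and aim for the balanced type $\cO(2)\oplus\cO(3)^2$ of the degree $4$ smoothing, which requires $h^0(E'(-4))=0$. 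On the nodal curve $E=T_X|_{C'\cup L}$ one checks, for \emph{every} distribution $(m_1,m_2)$ with $m_1+m_2=-4$ of the twist over the two components, that $h^0(E\otimes\cO(m_1,m_2))\ge 1$: the $\cO(2)$-summand on $L$ together with the $\cO$-summands always leaves a surviving section (for instance at bidegree $(-3,-1)$ the $C'$-side vanishes and the $L$-side contributes $h^0(\cO(-2)^2\oplus\cO)=1$). Since semicontinuity only transfers \emph{vanishing} from the special (nodal) bundle to the general one, and the needed vanishing provably fails on the nodal curve, no general-position choice of the gluing at the node can make your argument close. Equivalently, in the worst-case bound of Lemma \ref{gluing_balanced_curves} the smoothing is only constrained to be at least as balanced as $\bigoplus\cO(a_i+b_i)$, and with $\{b_i\}$ the unbalanced splitting of a line this bound is itself unbalanced, so the lemma yields nothing.

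The paper's route is structured precisely to avoid this: gluing is only ever performed against a \emph{perfectly} balanced piece, namely a degree $n-1$ curve $C_1$ with $T_X|_{C_1}\cong\cO(n+1-d)^{n-1}$, for which the worst case $\bigoplus\cO(a_i+(n+1-d))$ is already balanced (Lemma \ref{gluing_balanced_curves} applies verbatim). Since the degree increment is then $n-1$ rather than $1$, the paper must first manufacture balanced curves for a full window of $n-1$ consecutive degrees $\lfloor\tfrac{n-1}{n+1-d}\rfloor<e\le\lfloor\tfrac{n-1}{n+1-d}\rfloor+(n-1)$; this is done not by degeneration but by explicit algebra: choosing equations $F$ for which the map $\delta_F$ in $0\to T_X|_C\to\cO(e+1)^e\oplus\cO(e)^{n-e}\to\cO(de)\to 0$ has a computable balanced kernel (rational normal curves, $2d-2\le e\le n$, Theorem \ref{theorem_higher_degree}), the splitting $T_X|_C\cong N_{C/X}\oplus\cO(2)$ in the low-slope range plus Ran's balanced-normal-bundle curves ($e\le 2d-2$, Theorems \ref{theorem_e_less_than_or_equal_to_d+1} and \ref{theorem_e_between_d_and_2d-2}), and the key Ext-surjectivity statement (Proposition \ref{Ext_Proposition}) which lifts a balanced example from $\P^{n-1}$ to $\P^n$ because $\Ext^1(\cO(e),T_{\P^{n-1}}|_C)=0$. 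Your base case (one rational normal curve of the threshold degree) would have sufficed had the step size been $1$; as it stands, the attach-a-line step is the part that would need an entirely different proof, and nothing in the semicontinuity toolkit you invoke supplies it.
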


The quadrics ($d=2$) form a very special case in which only even-degree curves can have a balanced restricted tangent bundle. Odd-degree curves may have restricted tangent bundles as close as possible to the balanced splitting type, but cannot be balanced. We use a ruled surface construction from \cite{Kollar_Quadratic_solutions} that relates pairs of curves of different degrees in quadric hypersurfaces, and then show that odd-degree curves always interpolate fewer than the expected points.

\begin{theorem}(see Theorem \ref{theorem_odd_degree_curves_on_quadrics_are_not_balanced} and Theorem \ref{Corollary_quadrics})
    Let $X\subset \P^n$, $n\ge 3$, be a smooth quadric hypersurface.
    \begin{enumerate}
        \item For every even $e\ge 2$, $X$ contains degree $e$ rational curves with balanced restricted tangent bundle $T_X|_C\cong \cO(e)^{n-1}$.

        \item No odd-degree rational curve on $X$ has balanced restricted tangent bundle. For every odd $e\ge 1$, there exist degree $e$ rational curves with $T_X|_C\cong \cO(e-1)\oplus \cO(e)^{n-3}\oplus \cO(e+1)$.
    \end{enumerate}
\end{theorem}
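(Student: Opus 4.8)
The plan is to exploit the symmetric bilinear form defining the quadric to constrain the shape of $T_X|_C$, to construct the required curves inside a ruled surface, and to detect the odd-degree defect through interpolation.

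\emph{Setup and self-duality.} I parametrize $C$ by a morphism $f=(f_0,\dots,f_n)\colon\P^1\to X\subset\P^n$ with $f_i\in H^0(\cO_{\P^1}(e))$ having no common zero and $Q(f)\equiv 0$, where $Q$ is the defining quadratic form with (polar) symmetric matrix $B$. Combining the pulled-back Euler sequence $0\to\cO\to\cO(e)^{n+1}\to f^*T_{\P^n}\to0$ with the normal bundle sequence $0\to f^*T_X\to f^*T_{\P^n}\xrightarrow{\nabla Q(f)}\cO(2e)\to0$, and using $\nabla Q(f)=2Bf$, I identify $f^*T_X\cong f^{\perp}/\langle f\rangle$, where $\perp$ is taken for the nondegenerate $\cO(2e)$-valued symmetric form $a\te b\mapsto a^{\mathsf T}Bb$ on $\cO(e)^{n+1}$ and $\langle f\rangle$ is the sub-line-bundle spanned by $f$, which is isotropic since $Q(f)=0$. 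The quotient of $f^\perp$ by the isotropic line $\langle f\rangle$ inherits a nondegenerate form, so $f^*T_X\cong(f^*T_X)^\vee\te\cO(2e)$. Hence the splitting type $a_1\le\cdots\le a_{n-1}$ is symmetric about $e$, i.e.\ $a_i+a_{n-i}=2e$; writing the \emph{defect} as $k:=e-a_1\ge0$, balancedness means $k=0$ and the near-balanced type means $k=1$.

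\emph{Existence (part (1) and the second assertion of (2)).} I choose a general $\P^3\subset\P^n$, so that $S:=X\cap\P^3$ is a smooth quadric surface, i.e.\ $S\cong\P^1\times\P^1$, by Bertini; for $n=3$ take $S=X$. Let $C\subset S$ be the image of a morphism $\P^1\to S$ of bidegree $(\lfloor e/2\rfloor,\lceil e/2\rceil)$, a rational curve of degree $e$ on $X$. From $T_S\cong\cO(2,0)\oplus\cO(0,2)$ I get $T_S|_C\cong\cO(2\lfloor e/2\rfloor)\oplus\cO(2\lceil e/2\rceil)$, and since $S$ is cut out in $X$ by $n-3$ hyperplanes, $N_{S/X}|_C\cong\cO_C(1)^{n-3}\cong\cO(e)^{n-3}$. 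The sequence $0\to T_S|_C\to T_X|_C\to N_{S/X}|_C\to0$ has vanishing obstruction, since $\Ext^1$ lives in $H^1$ of sums of $\cO(0)$ and $\cO(\pm1)$, all zero; so it splits, yielding $T_X|_C\cong\cO(e)^{n-1}$ for $e$ even and $T_X|_C\cong\cO(e-1)\oplus\cO(e)^{n-3}\oplus\cO(e+1)$ for $e$ odd. This realizes the ruled-surface mechanism of Kollár, producing curves of each bidegree.

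\emph{Non-balancedness for odd $e$ (first assertion of (2)).} By the modular interpolation statement recalled in the introduction, if $T_X|_C\cong\cO(e)^{n-1}$ then $C$ interpolates $e+1$ general points of $X$ at prescribed general parameters; conversely a defect $k\ge1$ forces $a_1=e-k\le e-1$ and hence at most $e$ interpolated points. It therefore suffices to prove the parity statement that the defect satisfies $k\equiv e\pmod2$ for every degree $e$ rational curve on $X$: for odd $e$ this forces $k\ge1$, so no odd curve is balanced, while the construction above already attains $k=1$. I would establish this congruence as a $\Z/2$-valued invariant of the orthogonal bundle $G:=f^*T_X(-e)$ (of rank $n-1$, degree $0$, with nondegenerate $\cO$-valued form), computing it on the ruled-surface examples—where $G\cong\cO(a-b)\oplus\cO(b-a)\oplus\cO^{n-3}$ with $b-a\equiv e\pmod2$—and propagating it along the family of degree $e$ curves; equivalently, one reduces via the ruled surface to the base case $e=1$, where two general points of $X$ are never collinear on $X$, so a line interpolates only $1=e$ point.

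\textbf{Main obstacle.} The crux is the parity congruence $k\equiv e\pmod2$, i.e.\ that the interpolation deficit is \emph{forced} rather than merely permitted for odd curves. A plain dimension count does not obstruct passing through $e+1$ points, since the evaluation map has source of dimension $(e+1)(n-1)+(e+1)$ mapping to $X^{e+1}$ of dimension $(e+1)(n-1)$; and openness of balancedness together with irreducibility of the space of curves does not by itself exclude $k=0$, as the explicit examples only witness $k=1$. The real work is thus in isolating the correct $\Z/2$-invariant of the self-dual bundle $f^*T_X(-e)$ and proving its invariance in families, so that its value $e\bmod2$—verified on the ruled-surface curves—propagates to every odd-degree rational curve on $X$.
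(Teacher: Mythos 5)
Your existence argument (part (1) and the second claim of (2)) is correct and takes a genuinely different route from the paper. You cut $X$ by a general $\P^3$ to get a smooth quadric surface $S\cong \P^1\times\P^1$, take $C$ of bidegree $(\lfloor e/2\rfloor,\lceil e/2\rceil)$, and split the sequence $0\to T_S|_C\to T_X|_C\to N_{S/X}|_C\to 0$ using $\Ext^1(N_{S/X}|_C,T_S|_C)=H^1(\cO(2e_i-e))^{n-3}=0$, which holds precisely because $2e_i-e\in\{0,\pm 1\}$ for the balanced bidegree. This produces both splitting types in all degrees $e$ in one stroke. The paper instead computes explicit kernels of $\delta_F$ via column relations for rational normal curves of degree $e\le n$ (Theorem \ref{theorem_quadrics}) and then reaches $e>n$ by gluing perfectly balanced conics and smoothing (Lemma \ref{gluing_balanced_curves}, Theorem \ref{Corollary_quadrics}); your construction is shorter and more uniform, while the paper's yields explicit equations. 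Your self-duality observation, that $f^*T_X\cong f^{\perp}/\langle f\rangle$ carries a nondegenerate $\cO(2e)$-valued symmetric form and hence has splitting type symmetric about $e$, is also correct (away from characteristic $2$, where $\nabla Q=2Bf$ degenerates) and does not appear in the paper; but, as you note, by itself it is consistent with balancedness.

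The first assertion of (2) — non-balancedness for odd $e$ — is where your proposal has a genuine gap, and moreover the specific mechanism you propose would fail. You want the congruence $k\equiv e\pmod 2$ for the defect $k=e-a_1$, to be proved by exhibiting $k\bmod 2$ as a $\Z/2$-valued deformation invariant of the orthogonal bundle $G=f^*T_X(-e)$. But the parity of the largest splitting exponent of an orthogonal bundle on $\P^1$ is \emph{not} locally constant in families once the rank is at least $4$: for instance $\cO(2)\oplus\cO^{2}\oplus\cO(-2)$ deforms through orthogonal bundles to $\cO(1)^2\oplus\cO(-1)^2$. The genuine $\Z/2$-invariant, coming from $\pi_1(SO_{n-1})$, is $\sum_i b_i\bmod 2$ (the degree of the positive part of $G\cong\bigoplus_i(\cO(b_i)\oplus\cO(-b_i))\oplus\cO^{r_0}$), which is constant in that deformation while the parity of $\max_i b_i$ flips. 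The weaker true statement would in fact suffice for the theorem — balanced means $\sum_i b_i=0$, your ruled-surface curves of odd degree have $\sum_i b_i=1$, and irreducibility of $\mathrm{Mor}_e(\P^1,Q^n)$ (Theorem \ref{Kollar_Theorem_1}) would propagate the invariant — but you neither identify this invariant nor prove its local constancy (one needs the orthogonal structure to deform along the family and the component invariant of $O_{n-1}$-bundles to be locally constant), and you flag exactly this as the unresolved crux.

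Your fallback, ``reduce via the ruled surface to the base case $e=1$,'' is essentially the paper's actual proof (Theorem \ref{theorem_odd_degree_curves_on_quadrics_are_not_balanced}), but it is not a one-line reduction: the paper proves a precise two-step transfer (Proposition \ref{interpolate_m_iff_interpolate_m-2}) — degree $e$ curves interpolate $m$ general points if and only if degree $e-2$ curves interpolate $m-2$ general points — via Kollár's ruled surface $S(\phi,\psi)$ together with an auxiliary hyperplane and a genericity argument in both directions, and then inducts from the fact that lines interpolate only one point. As written, your proposal establishes the existence statements but not the impossibility statement, which is the substantive half of part (2).
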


We approach the problem by considering the particular case of hypersurfaces $X$ containing a degree $e$ rational normal curve $C$. In this case, the restricted tangent bundle is the kernel in the short exact sequence
\[ 0\longrightarrow T_X|_C\longrightarrow \cO(e+1)^e\oplus \cO(e)^{n-e}\overset{\delta}{\longrightarrow}\cO(de)\longrightarrow 0, \]
where $T_{\P^n}|_C\cong \cO(e+1)^e\oplus \cO(e)^{n-e}$ and $N_{X/\P^n}|_C\cong \cO(de)$. By choosing the appropriate hypersurface $X$, we can produce a map $\delta$ inducing a balanced kernel. This allows us to produce explicit examples of hypersurfaces $X$ with balanced restricted tangent bundle.

\begin{theorem}(see Theorems \ref{theorem_quadrics}, \ref{theorem_cubics}, \ref{theorem_quartics} and \ref{theorem_higher_degree})
    Let $X$ be a general degree $2\le d\le 4$ hypersurface containing a rational normal curve $C$ of degree $e\le n$. We list the exact splitting type of $T_X|_C$ for every $e$ and $n$ and compute explicit examples of $X$ for each one. We also compute explicit examples of degree $d$ hypersurfaces $X$ with balanced $T_X|_C$ when $n\ge 2d-2$ and $C$ is the rational normal curve of degree $n$ in $\P^n$.
\end{theorem}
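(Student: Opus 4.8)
The plan is to extract everything from the displayed short exact sequence
\[ 0 \to T_X|_C \to \cO(e+1)^e \oplus \cO(e)^{n-e} \overset{\delta}{\to} \cO(de) \to 0, \]
so that $T_X|_C = \ker \delta$ and its splitting type is governed entirely by the map $\delta$. The first step is to make $\delta$ completely explicit. Writing $F$ for a defining polynomial of $X$ and parametrizing $C$ by $[s:t] \mapsto [s^e : s^{e-1}t : \cdots : t^e]$ inside the coordinate $\P^e \subset \P^n$, the map $\delta$ is induced by the gradient $(\partial_0 F, \ldots, \partial_n F)$: after pulling back along the Euler sequence $0 \to \cO \to \cO(e)^{n+1} \to T_{\P^n}|_C \to 0$, the composite $\cO(e)^{n+1} \to \cO(de)$ is the row of restricted partials $g_i := \partial_i F|_C$, each a binary form of degree $e(d-1)$ in $s,t$. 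Since $F \in H^0(\cI_C(d))$, Euler's relation forces $\sum_i (x_i|_C)\, g_i = 0$, which is exactly the statement that this row kills the Euler line. Consequently $T_X|_C$ is recovered as the quotient of the syzygy bundle $\ker(\cO(e)^{n+1} \to \cO(de))$ by the Euler copy of $\cO$, and the whole problem becomes the computation of the syzygies of the $g_i$.

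The engine of the proof is then twofold. First, because being balanced (and, more generally, each fixed splitting type being the generic one) is an open condition in families (Section \ref{section_vector_bundles}), it suffices to produce, for each target splitting type, a single polynomial $F$ realizing it; semicontinuity upgrades this to the general $X$. Second, to carry out the computation for a chosen $F$ I would represent the row $(g_0, \ldots, g_n)$ as a matrix of binary forms and reduce it over $k[s,t]$ by row and column operations, reading off the splitting type of the kernel from the degrees that survive. The art is in choosing $F$ so that this matrix is sparse enough to diagonalize by hand: taking $F$ inside the ideal $\cI_C$, written in terms of the quadratic generators $x_i x_j - x_{i-1} x_{j+1}$ times degree $d-2$ forms, makes the restricted partials $g_i$ nearly monomial, so that common powers of $s$ and $t$ can be split off summand by summand. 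This is how I would produce the explicit hypersurfaces and the exact splitting type for every pair $(e,n)$ with $2 \le d \le 4$, organizing the calculation by the residue of $e(n+1-d)$ modulo $n-1$ so as to match the balanced profile forced by degree and rank.

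For the distinguished case $e = n$ (so $C$ spans $\P^n$) with $n \ge 2d-2$, the bundle $T_{\P^n}|_C = \cO(n+1)^n$ is already perfectly balanced, and in the splitting above $\delta$ becomes a single row of $n$ binary forms of degree $(d-1)n - 1$. Here $T_X|_C$ has rank $n-1$ and degree $n(n+1-d)$, and the goal is to arrange these forms so that their kernel is balanced. The inequality $n \ge 2d-2$ is precisely what guarantees that the common degree $(d-1)n-1$ is large enough relative to $n$ for a balanced profile to be both numerically possible and actually achievable; under it, I would exhibit an explicit $F$ (again drawn from $\cI_C$, with a cyclic or monomial coefficient pattern) whose restricted gradient is as general as possible, and verify balancedness directly from the reduced matrix.

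The main obstacle is that $\delta$ is not an arbitrary bundle map: the $g_i$ are the restricted partials of a \emph{single} degree-$d$ form $F$ vanishing on $C$, so they satisfy the Euler relation and are far from independent. Thus the generic-kernel heuristics for unconstrained maps do not apply out of the box, and the genuine content is to show that the constrained family $\{(\partial_0 F|_C, \ldots, \partial_n F|_C) : F \in H^0(\cI_C(d))\}$ still contains a member whose syzygies realize the balanced (or claimed) splitting type. I expect the explicit constructions — and the verification that the hand-reduced matrices have exactly the predicted degrees surviving — to be where essentially all the work lies, with the quadric, cubic, and quartic cases each demanding their own bookkeeping.
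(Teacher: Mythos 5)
Your setup is sound and in fact equivalent to the paper's: the paper also reduces everything to the kernel of $\delta$ in the sequence $0\to T_X|_C\to \cO(e+1)^e\oplus\cO(e)^{n-e}\to\cO(de)\to 0$, chooses $F$ inside $\cI_C$ written in the quadric generators $Q_{i,j}$ so that the entries of $\delta_F$ are (nearly) monomial, and invokes semicontinuity (Lemma \ref{lemma_balanced_is_open_condition}) to pass from one example to the general $X$. (The paper computes $\delta$ as $\psi_F\circ\beta$ through the normal bundle rather than as the restricted gradient row modulo the Euler line, but these descriptions agree, and your observation that the $g_i$ are constrained by coming from a single $F$ matches the paper's remark that $\phi_T$ is not surjective.) However, there is a genuine logical gap: semicontinuity only bounds the generic splitting type \emph{from below} in the dominance order. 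Exhibiting one $F$ with a given splitting type shows the general $X$ is \emph{at least as balanced} as that type; it pins down the exact generic type only when the exhibited type is already balanced. The statement you are proving claims the \emph{exact} splitting type for every $(e,n)$, and several of the listed generic types are unbalanced. Your plan contains no mechanism to rule out that the general hypersurface does better. The sharpest instance is the quadric with $e$ odd: numerically the balanced type $\cO(e)^{n-1}$ is perfectly consistent with the exact sequence (the slope is the integer $e$, and $\cO(2)\hookrightarrow\cO(e)^{n-1}$ poses no obstruction for $e\ge 2$), so no amount of example computation or local bookkeeping can show the general $T_X|_C$ is $\cO(e-1)\oplus\cO(e)^{n-3}\oplus\cO(e+1)$ rather than balanced. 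The paper needs a genuinely global argument here: Koll\'ar's ruled-surface construction relating degree $e$ and degree $e-2$ curves, plus an interpolation count, to prove odd-degree curves on quadrics never have balanced restricted tangent bundle (Theorem \ref{theorem_odd_degree_curves_on_quadrics_are_not_balanced}).

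The same issue arises for the unbalanced low-$e$ cases in the cubic and quartic theorems (e.g.\ lines, conics, and twisted cubics in large $n$, or $e=2$, $n=3$). There the paper's matching upper bound comes from Proposition \ref{split_if_normal_less_than_4}: when all summands of $N_{C/X}$ have degree $<4$, Serre duality kills $\Ext^1(N_{C/X},\cO(2))$, forcing $T_X|_C\cong N_{C/X}\oplus\cO(2)$, and then the known balancedness of $N_{C/X}$ for general $X$ (Coskun--Riedl) determines the splitting exactly (Corollary \ref{slope_inequality_splits}), with Proposition \ref{tangent_degree_less_than_2} handling the slope obstruction. You would need to add these obstruction arguments to your plan. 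Two smaller points: your claim that $n\ge 2d-2$ is what makes a balanced profile ``numerically possible'' is off --- a balanced profile exists for any rank and degree; the inequality instead reflects the shape of the balanced normal bundle $\cO(n+2-d)^{2d-4}\oplus\cO(n+3-d)^{n-2d+2}$ that underlies the construction. And where you propose a direct syzygy computation for every pair $(e,n)$, the paper instead proves only the case $e=n$ by explicit matrices and then propagates to all $n>e$ via the surjectivity of $\Hom(\cO(e),\cO(de))\to\Ext^1(\cO(e),T_Y|_C)$ (Proposition \ref{Ext_Proposition}), realized constructively by the factorizations $K_f=N_1\cdot J$; this is not a correctness issue for your plan, but it is what makes the full two-parameter family of computations tractable.
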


The examples can be worked out for degrees higher than 4, although the computations get more involved. We invite the reader to try the examples for particular cases of $(d,e,n)$ in \textit{Macaulay2} \cite{Macaulay2}.

One of the main tools in our proofs is Proposition \ref{Ext_Proposition}, which implies that for $C$ a rational normal curve, if $T_Y|_C$ is balanced for a degree $d$ hypersurface $Y\subset \P^{n-1}$, then we can extend $Y$ to a degree $d$ hypersurface $X\subset \P^n$ with $T_X|_C$ also balanced. This allows us to construct the case $e=n$ and work out inductively when $n>e$. The induction is done constructively for $d\le 4$, so we can produce the explicit examples of hypersurfaces. We also take advantage of the induction when $T_X|_C$ splits as $N_{C/X}\oplus \cO(2)$ with $N_{C/X}$ balanced. The known results for normal bundles and induction give us the restricted tangent bundle for curves of degree $e\le \max\{n,2d-2\}$. We then glue curves to obtain rational curves with balanced restricted tangent bundle for the remaining degrees $e$.

\subsection*{Organization of the paper}

Section 2 is dedicated to reviewing some preliminary results and describing the computation of the map $\delta$. We also introduce some notions on curve interpolation and specialization of vector bundles. Section 3 develops the induction argument and applies it to curves of degree $1\le e\le 2d-2$. In Section 4, we prove the theorem for quadrics. Sections 4, 5, and 6 show the computation of examples of hypersurfaces of degree $d=2,3,4$, respectively. In Section 7, we obtain examples for $2d-2\le e\le n$ and glue rational curves to obtain higher-degree curves with balanced restricted tangent bundle.

\subsection*{Acknowledgments}

I am very grateful to Carolina Araujo, Eduardo Esteves, Ziv Ran, Izzet Coskun, and Eric Riedl for discussions and correspondence on rational curves and their restricted tangent bundles. I thank Coskun and Riedl for suggesting the degeneration approach for higher-degree rational curves.

\section{Preliminaries}

\subsection{Vector bundles on rational curves}\label{section_vector_bundles}

Vector bundles on rational curves can only get ``more balanced" under generalization.

\begin{lemma}\label{lemma_balanced_is_open_condition}\cite[Theorem 14.7(a)]{3264}
    Let $E_1$ and $E_2$ be two vector bundles on $\P^1$ of same degree $d$ and rank $r$. Write their decomposition as direct sums of line bundles as
    \[ E_1 = \bigoplus_{i=1}^r \cO(a_i) \ \text{ and } \ E_2 = \bigoplus_{j=1}^r \cO(b_j), \]
    with $\{a_i\}$ and $\{b_j\}$ non-decreasing sequences. The vector bundle $E_1$ specializes to $E_2$ if and only if for every integer $k$ satisfying $1\le k\le r$, we have
    \[ \sum_{i=1}^{k}a_i\ge \sum_{j=1}^{k}b_j. \]
\end{lemma}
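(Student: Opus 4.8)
The statement is the classical characterization of the degeneration order on vector bundles over $\P^1$ by the dominance (majorization) order on splitting types, and the plan is to prove the two implications separately. Throughout, ``$E_1$ specializes to $E_2$'' means that there is a vector bundle $\mathcal E$ on $\P^1\times B$, flat over a smooth pointed curve $(B,0)$, whose fiber over $0$ is $E_2$ and whose fiber over a general point is $E_1$; equivalently one may work over the spectrum of a discrete valuation ring. It is convenient to record the equality $\sum_i a_i=\sum_j b_j=d$ (so the $k=r$ inequality is automatic) and to phrase everything through the top partial sums $T_j(a):=a_{r-j+1}+\cdots+a_r$, which satisfy $T_j(a)=d-\sum_{i=1}^{r-j}a_i$; the asserted inequalities $\sum_{i=1}^k a_i\ge \sum_{j=1}^k b_j$ are then equivalent to $T_j(a)\le T_j(b)$ for all $j$.

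For the forward implication I would use upper semicontinuity of cohomology. Twisting the flat family by $\cO(\ell)$ and applying semicontinuity of $h^0$ gives $h^0(E_1(\ell))\le h^0(E_2(\ell))$ for every $\ell\in\Z$. These cohomological inequalities recover the partial-sum inequalities through the elementary identity
\[ h^0\bigl(E(-m-1)\bigr)=\sum_i\max(0,a_i-m),\qquad T_j(a)=\min_{m\in\Z}\bigl(jm+h^0(E(-m-1))\bigr), \]
the second equality being a discrete Legendre transform (the inner function is convex and piecewise linear in $m$, with minimum the sum of the top $j$ values). Choosing $m_b$ to minimize the expression for $E_2$ and comparing,
\[ T_j(a)\le jm_b+h^0\bigl(E_1(-m_b-1)\bigr)\le jm_b+h^0\bigl(E_2(-m_b-1)\bigr)=T_j(b), \]
which is exactly the desired inequality for every $j$; getting the direction of the inequality right here is the one place that needs care.

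For the converse I would build the specialization from elementary ``spreading'' moves. First, a purely combinatorial lemma: if $a\ne b$ are non-decreasing with equal sum and $\sum_{i\le k}a_i\ge\sum_{i\le k}b_i$ for all $k$, then $a$ can be transformed into $b$ by a finite sequence of unit transfers, each replacing two entries $c\le c'$ by $c-1,c'+1$, with every intermediate sequence still dominating $b$ and strictly closer to $b$ in $\ell^1$-distance; this is the standard fact that the dominance order on integer sequences of fixed sum is generated by such transfers. Second, each elementary move is realized by an explicit flat family: setting $W=\cO(c)\oplus\cO(c'+1)$, fixing a point $x_0\in\P^1$ and writing $p\colon\P^1\times\A^1\to\P^1$ and $D=\{x_0\}\times\A^1$, I would take $\mathcal E:=\ker\bigl(p^*W\xrightarrow{\ \sigma\ }\cO_D\bigr)$, where $\sigma$ restricts over the parameter $t$ to the covector $(1,t)$ evaluated at $x_0$. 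Since the first coordinate of the covector never vanishes, $\sigma$ is surjective and $\cO_D$ is flat over $\A^1$, so $\mathcal E$ is flat over $\A^1$ and locally free; computing fibers, the kernel is $\cO(c)\oplus\cO(c')$ when $t\ne 0$ (a mixed elementary modification) and $\cO(c-1)\oplus\cO(c'+1)$ at $t=0$ (the modification aligned with the first summand), carrying the remaining summands $\cO(a_i)$ along unchanged as a trivial direct summand of the family. Composing these families, using transitivity of specialization, produces a specialization of $E_1$ to $E_2$.

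The main obstacle I expect is the converse, and within it the explicit family: one must verify flatness and local freeness of $\mathcal E$ and, above all, correctly identify both fibers as elementary modifications of $W$ (checking that the generic modification balances while the special one spreads). The combinatorial reduction to elementary transfers is routine but must be set up so that dominance over $b$ is preserved at every step, which is what licenses the inductive composition of the geometric families.
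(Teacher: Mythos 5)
The paper gives no proof of this lemma at all: it is quoted verbatim from \cite[Theorem 14.7(a)]{3264}, so the only meaningful comparison is with the standard argument in that reference, which your proposal reproduces correctly in both directions. Your forward direction is sound: semicontinuity in the flat family gives $h^0(E_1(\ell))\le h^0(E_2(\ell))$ for all $\ell$, the identity $h^0(E(-m-1))=\sum_i\max(0,a_i-m)$ is right (including the off-by-one), and the discrete Legendre transform $T_j(a)=\min_m\bigl(jm+h^0(E(-m-1))\bigr)$ checks out, with the lower bound $jm+\sum_i\max(0,a_i-m)\ge T_j(a)$ for all $m$ and equality at $m=a_{r-j+1}$; evaluating at the minimizer $m_b$ for $E_2$ then gives $T_j(a)\le T_j(b)$, which under equality of total degree is exactly the stated partial-sum inequalities. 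Your converse is also correct: the fiber computation for the elementary-modification family works, since at $t\ne 0$ the automorphism $(u,v)\mapsto(u,\,v+gu)$ of $\cO(c)\oplus\cO(c'+1)$ with $g\in H^0(\cO(c'+1-c))$, $g(x_0)=-1/t$ (available precisely because $c'+1-c\ge 1$), moves the covector $(1,t)$ to $(0,1)$, identifying the kernel as $\cO(c)\oplus\cO(c')$, while at $t=0$ it is visibly $\cO(c-1)\oplus\cO(c'+1)$; flatness and local freeness of $\mathcal{E}=\ker(p^*W\to\cO_D)$ follow from the short exact sequence as you say. The two ingredients you invoke as standard are indeed standard, but they are graded differently: the Hardy--Littlewood--P\'olya fact that the dominance order on integer sequences of fixed sum is generated by unit transfers is genuinely routine, whereas transitivity of specialization is the one place a careful reader would press you --- composing a family over $(B_1,0)$ with general fiber $E_1$ and special fiber $E$ with a family over $(B_2,0)$ from $E$ to $E_2$ is not literally a concatenation, and one should either identify specialization of bundles on $\P^1$ with topological specialization in the moduli stack $\mathrm{Bun}_{r,d}$ (where closure is transitive), or exhibit all intermediate types inside a single irreducible family, e.g.\ a versal deformation of $E_2$. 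With that one sentence of justification added, your proof is complete and is essentially the proof of the cited theorem.
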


In particular, being balanced is an open condition in a family of vector bundles on a rational curve. Thus, if we can find specific examples of hypersurfaces and rational curves for which the restricted tangent bundle is balanced, then we can conclude that the balancedness is maintained for the general member in their families.

For $E$ a vector bundle in $\P^1$, we denote by $\mu(E)$ the \textit{slope} of $E$, defined by
\[ \mu(E) = \frac{\deg E}{\rk E}. \]
If $E$ is balanced, then $E$ has a unique decomposition as a sum of line bundles $\cO(\left \lfloor \mu(E) \right \rfloor )$ and $\cO(\left \lceil \mu(E) \right \rceil )$.

\subsection{Rational normal curves}

Let $e\le n$. We define the \textit{rational normal curve} of degree $e$ in $\P^n$ as the curve $C$ defined by
\[ f = \left ( s^e : s^{e-1}t : s^{e-2}t^2 : \cdots : st^{e-1} : t^e : 0 : \cdots : 0 \right ): \P^1\to \P^n. \]

Any projective change of coordinates of $C$ is often also called a rational normal curve of degree $e$, but to fix coordinates, we will refer to it as the curve defined above.

Observe that $C$ spans the linear space $\P^e = V(x_{e+1}, \ldots , x_n)$. Define the quadratic forms $Q_{i,j} = x_ix_{j-1} - x_{i-1}x_j$ for $1\le i<k\le e$, which correspond to the $2\times 2$ minors of the matrix
\[ \begin{bmatrix}
x_1 & x_2 & \cdots & x_e \\
x_0 & x_1 & \cdots & x_{e-1} 
\end{bmatrix}. \]

Together with the linear forms cutting out $\P^e$, they generate the homogeneous ideal $I_{C}\subset k[x_0, \ldots , x_n]$ of the rational normal curve:
\[ I_{C} = \left ( \{ Q_{i,j} \ | \ 1\le i<j\le e \} \cup \{x_{e+1}, \ldots , x_n\} \right ). \]

In \cite[Proposition 2.4]{CR}, Coskun and Riedl use the relations between the generators of $I_C$ to show that the quadrics $Q_{i,i+1}$, for $1\le i\le n-1$, suffice to determine $N_{C/\P^n}$.

\begin{proposition}\label{relations}(\cite{CR} Proposition 2.4)
Let $C$ be the rational normal curve of degree $n$ in $\P^n$. An element $\alpha \in H^0(N_{C/\P^n}) = \Hom (\cI_{C/\P^n}, \cO_C)$ is determined by the images $\alpha (Q_{i,i+1})$, for $1\le i\le n-1$. Furthermore, $s^{n-i-1}t^{i-1}$ divides $\alpha (Q_{i,i+1})$ and this is the only constraint on $\alpha (Q_{i,i+1})$. If $b_{i,i+1}$, for $1\le i\le n-1$, are arbitrary polynomials of degree $n+2$, there exists an element $\alpha \in H^0(N_{C/\P^n})$ such that $\alpha (Q_{i,i+1}) = s^{n-i-1}t^{i-1}b_{i,i+1}$.

In addition, the image $\alpha (Q_{i,j})$ of the other generators of $I_{C}$ are expressed in terms of $b_{l,l+1}$ by
\[ \alpha (Q_{i,j}) = \sum _{l=i}^{j-1} s^{n-j-i+l}t^{j+i-l-2}b_{l,l+1}. \]
\end{proposition}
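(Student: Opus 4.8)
The plan is to identify global sections of the normal bundle with first-order deformations of the parametrization and to compute $\alpha$ as a directional derivative. Writing $f=(f_0,\dots,f_n)$ with $f_k=s^{n-k}t^k$, a first-order deformation is a vector $g=(g_0,\dots,g_n)$ of degree-$n$ binary forms, and the associated $\alpha\in\Hom(\cI_{C/\P^n},\cO_C)$ acts on a generator $F$ by $\alpha(F)=\sum_{k=0}^n\frac{\partial F}{\partial x_k}(f)\,g_k$, the linear term of $F(f+\epsilon g)$; this is well defined because $F(f)=0$. Applying it to $Q_{i,i+1}=x_i^2-x_{i-1}x_{i+1}$ gives
\[ \alpha(Q_{i,i+1})=2s^{n-i}t^ig_i-s^{n-i-1}t^{i+1}g_{i-1}-s^{n-i+1}t^{i-1}g_{i+1}=s^{n-i-1}t^{i-1}\bigl(2st\,g_i-t^2g_{i-1}-s^2g_{i+1}\bigr), \]
which at once proves that $s^{n-i-1}t^{i-1}$ divides $\alpha(Q_{i,i+1})$ and that the cofactor $b_{i,i+1}$ is a binary form of degree $n+2$.

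Next I would record the polynomial identity $x_iQ_{i,j}=x_{j-1}Q_{i,i+1}+x_{i-1}Q_{i+1,j}$, checked by direct expansion. Since $\alpha$ is $\cO_C$-linear and $x_k|_C=s^{n-k}t^k$, restricting the coefficients to $C$ turns this into $s^{n-i}t^i\,\alpha(Q_{i,j})=s^{n-j+1}t^{j-1}\alpha(Q_{i,i+1})+s^{n-i+1}t^{i-1}\alpha(Q_{i+1,j})$. Inducting on the gap $j-i$ then expresses every $\alpha(Q_{i,j})$ through the adjacent values $\alpha(Q_{l,l+1})$; substituting $\alpha(Q_{l,l+1})=s^{n-l-1}t^{l-1}b_{l,l+1}$ and verifying that the resulting $s$- and $t$-exponents are non-negative yields the closed formula. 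Because the $Q_{i,j}$ generate $\cI_{C/\P^n}$, this also shows that $\alpha$ is completely determined by the $n-1$ values $\alpha(Q_{i,i+1})$.

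Finally, for the existence (the claim that divisibility and degree $n+2$ are the \emph{only} constraints) I would argue by a dimension count. Since $N_{C/\P^n}\cong\cO(n+2)^{\oplus(n-1)}$ one has $h^0(N_{C/\P^n})=(n-1)(n+3)$, which equals the dimension of the space of $(n-1)$-tuples of degree-$(n+2)$ forms $(b_{i,i+1})$. The assignment $\alpha\mapsto(b_{i,i+1})_{i=1}^{n-1}$ is linear and injective—if every $b_{i,i+1}$ vanishes then every $\alpha(Q_{i,i+1})$ vanishes, hence every $\alpha(Q_{i,j})$ vanishes by the formula, so $\alpha=0$—and therefore an isomorphism, which is exactly the statement that an arbitrary tuple of degree-$(n+2)$ forms occurs. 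The step I expect to be the crux is this surjectivity: a priori the higher (Plücker-type) syzygies among the minors $Q_{i,j}$ could obstruct the realization of a prescribed $\alpha(Q_{i,i+1})$, and the dimension count is what lets me bypass a direct analysis of those syzygies. If one prefers to avoid quoting $h^0(N_{C/\P^n})$, the same conclusion follows by checking directly that the linear map $g\mapsto\bigl(2st\,g_i-t^2g_{i-1}-s^2g_{i+1}\bigr)_{i=1}^{n-1}$ is surjective with kernel exactly the four-dimensional space of trivial (reparametrization and scaling) deformations.
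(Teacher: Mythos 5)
Your proof is correct, but it takes a genuinely different route from the source the paper relies on: the paper cites this statement from \cite{CR}, where it is proved by commutative algebra, prescribing $\alpha$ on the generators $Q_{i,j}$ and extracting the constraints from the syzygies among the $2\times 2$ minors --- precisely the analysis you bypass. Your replacement (realizing sections as derivatives $\alpha(F)=\sum_k \frac{\partial F}{\partial x_k}(f)\,g_k$ of the parametrization, computing $b_{i,i+1}=2st\,g_i-t^2g_{i-1}-s^2g_{i+1}$, and closing by a dimension count) is sound, and the identity $x_iQ_{i,j}=x_{j-1}Q_{i,i+1}+x_{i-1}Q_{i+1,j}$ with induction on $j-i$ does reproduce the closed formula, with non-negative exponents throughout. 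Two steps should be made explicit. First, the divisibility is asserted for \emph{every} $\alpha$, while your computation proves it only for $\alpha$ coming from some $g$; you need surjectivity of $H^0(\cO_{\P^1}(n)^{n+1})\to H^0(T_{\P^n}|_C)\to H^0(N_{C/\P^n})$, which follows from $H^1(\cO_{\P^1})=H^1(T_{\P^1})=0$ (the paper's standing hypothesis $p\nmid e$ keeps the relevant Euler diagram valid). Second, quoting $N_{C/\P^n}\cong\cO(n+2)^{n-1}$ for the count $h^0=(n-1)(n+3)$ is circular relative to \cite{CR}, where that splitting (Corollary \ref{normalbundlee}) is \emph{deduced} from this proposition; the circle is broken either by computing $h^0(N_{C/\P^n})=(n+1)^2-4$ directly from the two exact sequences above, or by your flagged alternative: the kernel of $g\mapsto\bigl(2st\,g_i-t^2g_{i-1}-s^2g_{i+1}\bigr)_i$ is exactly the $4$-dimensional space of trivial deformations, so the map is onto for pure dimension reasons. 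With those additions your argument is complete; it trades the syzygy analysis of \cite{CR} for cohomological input special to $\P^1$, and is arguably more conceptual.
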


\begin{corollary}\label{normalbundlee} (\cite{CR} Corollary 2.6)
Let $C$ be the degree $e$ rational normal curve in $\P^n$. Then the normal bundle $N_{C/\P^n}$ is $N_{C/\P^e}\oplus N_{\P^e/\P^n}\cong \cO_{\P^1}(e+2)^{e-1}\oplus \cO_{\P^1}(e)^{n-e}$.
\end{corollary}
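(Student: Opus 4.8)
The plan is to exploit that $C$ is non-degenerate in the linear subspace $\P^e = V(x_{e+1},\dots,x_n)\subset \P^n$ it spans, and to analyze the normal bundle sequence of the inclusions $C\subset \P^e\subset \P^n$,
\[ 0\longrightarrow N_{C/\P^e}\longrightarrow N_{C/\P^n}\longrightarrow N_{\P^e/\P^n}|_C\longrightarrow 0. \]
Since $\P^e$ is a linear subspace, $N_{\P^e/\P^n}\cong \cO_{\P^e}(1)^{n-e}$, and as $C$ has degree $e$ we have $\cO_{\P^e}(1)|_C\cong \cO_{\P^1}(e)$, so the quotient term is $N_{\P^e/\P^n}|_C\cong \cO_{\P^1}(e)^{n-e}$. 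The two remaining tasks are then (a) to compute $N_{C/\P^e}$ and (b) to show that the sequence splits.

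For (a) I would first pin down the numerics. The conormal sequence together with the Euler sequence give $\deg N_{C/\P^e} = \deg T_{\P^e}|_C - \deg T_C = e(e+1) - 2 = (e-1)(e+2)$, while $\rk N_{C/\P^e} = e-1$, so the bundle has slope $e+2$. Writing $N_{C/\P^e} = \bigoplus_{i=1}^{e-1}\cO_{\P^1}(a_i)$, it will be perfectly balanced, i.e.\ $\cO_{\P^1}(e+2)^{e-1}$, as soon as no $a_i$ exceeds $e+2$: indeed $\sum a_i = (e-1)(e+2)$ over $e-1$ summands each at most $e+2$ forces every $a_i = e+2$. The inequality $a_i\le e+2$ for all $i$ is exactly the vanishing $H^0(N_{C/\P^e}\otimes\cO_{\P^1}(-(e+3))) = 0$.

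The heart of the argument, and the step I expect to be the main obstacle, is this vanishing, which I would extract from Proposition \ref{relations} applied with $n$ replaced by $e$ to the non-degenerate curve $C\subset \P^e$. By that proposition a section $\alpha$ of $N_{C/\P^e}$ is determined by its values $\alpha(Q_{i,i+1})\in H^0(\cO_{\P^1}(2e))$, each divisible by the degree-$(e-2)$ monomial $s^{e-i-1}t^{i-1}$. Twisting the bundle down by $\cO_{\P^1}(-(e+3))$ turns these into sections of $\cO_{\P^1}(e-3)$ still required to be divisible by $s^{e-i-1}t^{i-1}$; since $e-3 < e-2$, each $\alpha(Q_{i,i+1})$ must vanish, hence $\alpha = 0$, giving the claimed vanishing and therefore $N_{C/\P^e}\cong \cO_{\P^1}(e+2)^{e-1}$. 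Equivalently, one may read Proposition \ref{relations} as identifying $\alpha\mapsto (\alpha(Q_{i,i+1})/s^{e-i-1}t^{i-1})_i$ with an isomorphism onto $\bigoplus_{i=1}^{e-1}\cO_{\P^1}(e+2)$. The point to be careful about is that Proposition \ref{relations} is phrased on global sections, so to obtain an isomorphism of sheaves (not merely of spaces of sections) one must check that the same description, with $b_{i,i+1}$ of degree $e+2+m$, computes $H^0$ of every twist $N_{C/\P^e}\otimes\cO_{\P^1}(m)$.

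Finally, for (b), now that both outer terms are known, the extension class lives in $\Ext^1(\cO_{\P^1}(e)^{n-e},\cO_{\P^1}(e+2)^{e-1})$, a direct sum of copies of $H^1(\cO_{\P^1}(2)) = 0$. Hence the sequence splits and $N_{C/\P^n}\cong \cO_{\P^1}(e+2)^{e-1}\oplus \cO_{\P^1}(e)^{n-e}$, which is the asserted decomposition $N_{C/\P^e}\oplus N_{\P^e/\P^n}|_C$.
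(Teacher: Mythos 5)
The paper itself gives no proof here: the corollary is quoted verbatim from \cite{CR}, and in that source it is derived from exactly the proposition you invoke. Your route matches that derivation point for point: $N_{\P^e/\P^n}|_C\cong \cO_{\P^1}(e)^{n-e}$ because $\P^e$ is linear and $C$ has degree $e$; the degree count $\deg N_{C/\P^e}=e(e+1)-2=(e-1)(e+2)$ with rank $e-1$; the reduction of perfect balancedness to $H^0(N_{C/\P^e}\otimes\cO_{\P^1}(-(e+3)))=0$; and the splitting of the normal bundle sequence via $\Ext^1(\cO_{\P^1}(e)^{n-e},\cO_{\P^1}(e+2)^{e-1})=H^1(\P^1,\cO_{\P^1}(2))^{\oplus(n-e)(e-1)}=0$. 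All of these steps are correct.

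The one step you flag yourself --- that Proposition \ref{relations} is a statement about $H^0(N_{C/\P^e})$ and not about its twists --- is a genuine gap as written, but it closes much more cheaply than by re-verifying the description for every twist $m$. Realize the twist by an effective divisor $D=q_1+\cdots+q_{e+3}$ of distinct points chosen in the open locus $st\neq 0$, using $\cO_{\P^1}(-(e+3))\cong\cO_{\P^1}(-D)$, so that $H^0(N_{C/\P^e}(-(e+3)))$ is identified with the space of global sections $\alpha$ of $N_{C/\P^e}$ vanishing on $D$. Evaluation at $Q_{i,i+1}$ is induced by a map of sheaves $N_{C/\P^e}\to\cO_C(2)$ (the quadric gives $\cO(-2)\to\cI_C$, hence a map on $\sHom(\cI_C/\cI_C^2,\cO_C)$), so $\alpha(q_j)=0$ forces $\alpha(Q_{i,i+1})(q_j)=0$; since the monomial $s^{e-i-1}t^{i-1}$ does not vanish at $q_j$, each $b_{i,i+1}$, of degree $e+2$, vanishes at the $e+3$ points $q_j$ and is therefore zero, whence $\alpha=0$ by the determination clause of Proposition \ref{relations}. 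The choice of points is not cosmetic: if you instead realize the twist at a point of $\{st=0\}$, say by imposing vanishing to order $e+3$ at $t=0$, the monomial factors absorb part of the vanishing --- $t^{e+3}\mid s^{e-i-1}t^{i-1}b_{i,i+1}$ only forces $t^{e+4-i}\mid b_{i,i+1}$, which for $i\ge 2$ does not kill $b_{i,i+1}$ --- so your phrase ``sections of $\cO_{\P^1}(e-3)$ still required to be divisible by $s^{e-i-1}t^{i-1}$'' silently uses the sheaf-level upgrade of the divisibility clause, which is precisely what was to be checked. With the general-points version above, only the stated $H^0$-level proposition is needed and your argument is complete.
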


By using the Euler sequence restricted to $C$:
\[ 0\longrightarrow \Omega_{\P^n}|_C \longrightarrow \cO_{\P^1}(-e)^{n+1} \overset{f}{\longrightarrow} \cO_{\P^1} \longrightarrow 0, \]
we can compute the restricted tangent sheaf $T_{\P^n}|_C$.

\begin{proposition}\label{tangent_in_Pn}(see \cite[Proposition 3.3]{CR18})
    Let $C$ be the rational normal curve of degree $e$ in $\P^n$. Then
    \[ T_{\P^n}|_C\cong T_{\P^e}|_C\oplus N_{\P^e/\P^n}\cong \cO_{\P^1}(e+1)^{e}\oplus \cO_{\P^1}(e)^{n-e}. \]
\end{proposition}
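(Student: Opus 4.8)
The plan is to read off $T_{\P^n}|_C$ from the Euler sequence and then isolate the contribution of the linear span $\P^e$ of $C$. First I would pull back the Euler sequence $0\to\cO_{\P^n}\to\cO_{\P^n}(1)^{n+1}\to T_{\P^n}\to 0$ along the parametrization $f\colon\P^1\to\P^n$. Since $f^*\cO_{\P^n}(1)\cong\cO_{\P^1}(e)$, this gives $0\to\cO_{\P^1}\to\cO_{\P^1}(e)^{n+1}\to T_{\P^n}|_C\to 0$, where the tautological inclusion, being multiplication by $(x_0,\dots,x_n)$, pulls back to the map with components $(s^e,s^{e-1}t,\dots,t^e,0,\dots,0)$. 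The structural point is that the last $n-e$ components vanish because $C\subset\P^e=V(x_{e+1},\dots,x_n)$, so the inclusion factors through the first $e+1$ summands $\cO_{\P^1}(e)^{e+1}$.

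I would then split off the transverse directions. Writing $\cO_{\P^1}(e)^{n+1}=\cO_{\P^1}(e)^{e+1}\oplus\cO_{\P^1}(e)^{n-e}$ and observing that the image of $\cO_{\P^1}$ lies entirely in the first factor, the cokernel decomposes as the cokernel on the first factor plus the untouched second factor. The cokernel on the first factor is precisely $T_{\P^e}|_C$, since that subsequence is the pullback of the Euler sequence of $\P^e$, of which $C$ is the degree $e$ rational normal curve; and the second factor is $N_{\P^e/\P^n}|_C$, since the normal bundle of the linear subspace satisfies $N_{\P^e/\P^n}\cong\cO_{\P^e}(1)^{n-e}$ and restricts to $\cO_{\P^1}(e)^{n-e}$ on $C$. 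This yields the first isomorphism $T_{\P^n}|_C\cong T_{\P^e}|_C\oplus N_{\P^e/\P^n}$.

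The heart of the matter is to prove $T_{\P^e}|_C\cong\cO_{\P^1}(e+1)^e$. Its rank is $e$ and its degree is $e(e+1)$, so the slope is the integer $e+1$ and the claim is perfect balance. I would reformulate this through the syzygy bundle $M:=\ker\bigl(H^0(\cO_{\P^1}(e))\otimes\cO_{\P^1}\xrightarrow{\ev}\cO_{\P^1}(e)\bigr)$: dualizing and twisting the evaluation sequence by $\cO_{\P^1}(e)$ identifies it with the pulled-back Euler sequence, giving $T_{\P^e}|_C\cong M^\vee\otimes\cO_{\P^1}(e)$, so it suffices to show $M\cong\cO_{\P^1}(-1)^e$. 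Writing $M=\bigoplus_i\cO_{\P^1}(a_i)$, the vanishing $H^0(M)=0$ (evaluation is an isomorphism on global sections) forces every $a_i\le-1$, while $H^1(M(1))=0$ (the multiplication map $H^0(\cO_{\P^1}(e))\otimes H^0(\cO_{\P^1}(1))\to H^0(\cO_{\P^1}(e+1))$ is surjective) forces every $a_i\ge-2$; since $\sum_i a_i=-e$ over $e$ summands, all $a_i=-1$. Hence $T_{\P^e}|_C\cong\cO_{\P^1}(1)^e\otimes\cO_{\P^1}(e)=\cO_{\P^1}(e+1)^e$.

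I expect the main obstacle to be exactly this last perfect-balance statement, and the difficulty is that the obvious bounds are too weak. Global generation of $T_{\P^e}(-1)$ only yields $a_i\ge e$, and the natural $\mathrm{SL}_2$-filtration of $T_{\P^e}|_C$ has badly unbalanced graded pieces $\cO_{\P^1}(2),\cO_{\P^1}(4),\dots,\cO_{\P^1}(2e)$, so the balance must come entirely from the non-splitting of that filtration rather than from any termwise estimate. The syzygy reformulation is what makes the two vanishings $H^0(M)=0$ and $H^1(M(1))=0$ sharp enough to pin the splitting type on the nose, because $M$ lives in a narrow degree window; by contrast, the remaining steps (pullback of the Euler sequence and the direct-sum decomposition) are formal.
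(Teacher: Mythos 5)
Your proof is correct. Its skeleton --- pulling back the Euler sequence along $f$, observing that the tautological section $(s^e, s^{e-1}t, \dots, t^e, 0, \dots, 0)$ lands in the first $e+1$ summands, and splitting off $N_{\P^e/\P^n}|_C \cong \cO_{\P^1}(e)^{n-e}$ --- is exactly the route the paper indicates (it gives no proof beyond restricting the Euler sequence and citing \cite{CR18}). Where you genuinely diverge is at the crux, the isomorphism $T_{\P^e}|_C \cong \cO_{\P^1}(e+1)^e$. The computation in \cite{CR18}, in the same explicit style this paper uses everywhere, exhibits the kernel $\Omega_{\P^e}|_C$ of $(s^e, \dots, t^e)\colon \cO(-e)^{e+1} \to \cO$ concretely via the $e$ linear Koszul syzygies $t\cdot s^{e-i}t^i - s\cdot s^{e-i-1}t^{i+1}$ --- the same relations that produce the matrix $\beta$ in Section 2 --- and then checks pointwise maximal rank, just as the paper does for all its kernel matrices $K_F$. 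You instead identify $T_{\P^e}|_C$ with $M^\vee(e)$ for the syzygy bundle $M$ of $\cO_{\P^1}(e)$ and pin the splitting type by the vanishings $H^0(M)=0$ and $H^1(M(1))=0$ together with a degree count; both identifications (co-evaluation versus Euler inclusion, and the surjectivity of $H^0(\cO(e))\otimes H^0(\cO(1))\to H^0(\cO(e+1))$) are valid. What your route buys: no matrices, no pointwise rank check, and manifest characteristic independence (the paper's caveat $p \nmid e$ concerns the map $df$ in a later diagram, not this proposition). What the explicit route buys: the syzygies themselves, which the paper needs downstream to write $\beta$, $\delta = \psi\circ\beta$, and the kernel matrices, so for this paper's program the concrete relations are not dispensable. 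Your closing diagnosis is also accurate: global generation of $T_{\P^e}(-1)$ only yields $a_i \ge e$, and the osculating filtration with graded pieces $\cO(2), \cO(4), \dots, \cO(2e)$ shows no termwise estimate can succeed, so either sharp cohomological input or the explicit syzygies is genuinely required.
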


\begin{remark}
    We can also compute the normal bundle $N_{C/\P^n}$ by using the sequence
    \[ 0\longrightarrow N_{C/\P^n}^* \longrightarrow \cO_{\P^1}(-e)^{n+1} \overset{\partial f}{\longrightarrow} \cO_{\P^1}^2 \longrightarrow 0, \]
    where $\partial f$ is the transpose of the Jacobian matrix, as it is done in \cite[Theorem 3.2]{CR18}.
\end{remark}

\subsection{Normal bundles on hypersurfaces}

Let $X$ be a degree $d$ hypersurface in $\P^n$ containing the degree $e$ rational normal curve $C$ and smooth along $C$. There is a short exact sequence of normal bundles:
\[ 0\longrightarrow N_{C/X}\longrightarrow N_{C/\P^n}\overset{\psi}{\longrightarrow} N_{X/\P^n|_C}\longrightarrow 0. \]

By the identification $N_{X/\P^n}\cong \cO_X(d)$ and Corollary \ref{normalbundlee}, this sequence is equivalent to
\[ 0\longrightarrow N_{C/X}\longrightarrow \cO(e+2)^{e-1}\oplus \cO(e)^{n-e}\overset{\psi}{\longrightarrow}\cO(de)\longrightarrow 0. \]

In particular, every hypersurface $X$ defined by a degree $d$ polynomial $F\in H^0(\cI_{C/\P^n}(d))$ induces a map $\psi$ of normal bundles, thus defining a map:
\[ \phi: H^0(\cI_{C/\P^n}(d))\to \Hom (\cO(e+2)^{e-1}\oplus \cO(e)^{n-e}, \cO(de)). \]

Proposition \ref{relations} allows us to explicitly obtain the map $\psi_F$ for every given polynomial $F$. First, let $e=n$. Write $F$ as a combination of the generators of $I_C$, $F = \sum_{1\le i<j\le n} F_{i,j}Q_{i,j}$. Then $\psi_F(\alpha) = \sum_{1\le i<j\le n} F_{i,j}|_C\cdot \alpha (Q_{i,j})$. By the relations from Proposition \ref{relations}, we have
\[ \psi_F(\alpha) = \sum_{1\le i<j\le n} F_{i,j}|_C\sum_{l=i}^{j-1} s^{n-j-i+l}t^{j+i-l-2}b_{l,l+1}. \]

Collect the terms and write the sum as $\sum_{i=1}^{n-1}C_ib_{i,i+1}$, then the map $\psi_F: \cO(n+2)^{n-1}\to \cO(dn)$ is given by the matrix $(C_1, \cdots , C_{e-1})$.

For $e<n$, the normal bundle $N_{C/\P^n}$ splits as the direct sum $N_{C/\P^e}\oplus N_{\P^e/\P^n}$. So, if we write $F$ as
\[ F = \sum_{1\le i<j\le n} F_{i,j}Q_{i,j} + \sum_{k=e+1}^n G_kx_k, \]
and collect the coefficients $C_1, \ldots , C_{e-1}$ of the $b_{l,l+1}$ as above, then the map $\psi_F$ is given by the matrix
\[ \psi_F = \left (C_1, \cdots , C_{e-1}; G_{e+1}|_C, \cdots , G_n|_C\right ): \cO(e+2)^{e-1}\oplus \cO(e)^{n-e}\to \cO(de). \]

\begin{proposition}\label{surjection_of_phi}(\cite[Theorem 3.1]{CR})
    If $d\ge 3$, then the homomorphism $\phi$ is surjective, that is, every map $\psi \in \Hom (\cO(e+2)^{e-1}\oplus \cO(e)^{n-e}, \cO(de))$ is induced by some hypersurface $X$.
\end{proposition}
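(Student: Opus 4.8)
The plan is to use that $\phi$ is a \emph{linear} map of finite-dimensional $k$-vector spaces, so only surjectivity is at stake, together with the fact that its target splits as
\[ \Hom(\cO(e+2)^{e-1}, \cO(de)) \oplus \Hom(\cO(e)^{n-e}, \cO(de)) \]
according to the decomposition $N_{C/\P^n}\cong N_{C/\P^e}\oplus N_{\P^e/\P^n}$ of Corollary \ref{normalbundlee}. Writing $F = \sum_{1\le i<j\le e} F_{i,j}Q_{i,j} + \sum_{k=e+1}^n G_k x_k$, the component of $\psi_F$ in the second summand is $(G_{e+1}|_C,\dots,G_n|_C)$ and depends only on the $G_k$, while the first component $(C_1,\dots,C_{e-1})$ depends only on the $F_{i,j}$. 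Since the $F_{i,j}$ and $G_k$ may be chosen independently, the image of $\phi$ is the product of the images of these two component maps, and it suffices to treat each factor separately.

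The second factor is easy: a polynomial $G_k$ of degree $d-1$ restricts to an arbitrary form $G_k|_C$ of degree $(d-1)e$ on $C\cong\P^1$, since $H^0(\cO_{\P^n}(d-1))\to H^0(\cO_C(d-1))=H^0(\cO_{\P^1}((d-1)e))$ is surjective by projective normality of $C$ and $d-1\ge 1$. For the first factor, the $C_i$ are given by the formula preceding the Proposition and only involve the minors $Q_{i,j}$ with $1\le i<j\le e$, so the problem is identical to the one for the degree $e$ rational normal curve in $\P^e$; I may therefore assume $e=n$. Here is where $d\ge 3$ enters decisively: now $F_{i,j}$ has degree $d-2\ge 1$, so its restriction $g_{i,j}:=F_{i,j}|_C$ is an arbitrary form of degree $(d-2)n$, again by projective normality. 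Thus I must show that, as the $g_{i,j}$ range over all such forms, the tuples $(C_1,\dots,C_{n-1})$ fill $\bigoplus_{l=1}^{n-1}H^0(\cO_{\P^1}((d-1)n-2))$.

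The observation that organizes the rest is this. Expanding $g_{i,j}=\sum_{m=0}^{(d-2)n}a^{(i,j)}_m\,s^{(d-2)n-m}t^m$ in the formula of Proposition \ref{relations}, the coefficient $a^{(i,j)}_m$ contributes, for each $l$ with $i\le l<j$, a single monomial of $t$-degree $\tau=m+i+j-l-2$ to $C_l$; every monomial it feeds therefore satisfies $l+\tau=m+i+j-2$. Grouping target monomials by the value $\sigma:=l+\tau$ block-diagonalizes the linear system into independent problems, one per ``anti-diagonal'' $\sigma$. On a fixed anti-diagonal the contribution of $g_{i,j}$ is supported on the output positions $l\in[i,j-1]$ and depends on a single scalar, so the sub-problem is whether the indicator vectors of the intervals $[i,j-1]$, ranging over the pairs $(i,j)$ with $i+j-2\le\sigma\le i+j-2+(d-2)n$, span $k^{L_\sigma}$, where $L_\sigma$ is the contiguous set of admissible positions $l$. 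These are exactly the intervals $[a,b]\subseteq\{1,\dots,n-1\}$ with $a+b$ in a window of length $(d-2)n+1$; because $d\ge 3$ this window has length $\ge n+1>|L_\sigma|$, and a short telescoping argument using initial segments $[1,k]$ (when $\min L_\sigma=1$) or final segments $[k,n-1]$ (when $\max L_\sigma=n-1$) — one of which always occurs, since both failing would force $(d-1)n\le\sigma\le n-2$ — produces every standard basis vector. I expect the main obstacle to be exactly this per-anti-diagonal spanning near the two corners of $L_\sigma$, where the admissible window for $a+b$ only barely contains an interval through the extreme position, so the telescoping direction must be chosen with care; the bound $d\ge 3$ is sharp, since for $d=2$ one is forced to $m=0$ and the windows collapse, consistent with the exceptional behaviour of quadrics. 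As a conceptual check, the exact sequence $0\to\cI_{C/\P^n}^2(d)\to\cI_{C/\P^n}(d)\to N^*_{C/\P^n}(d)\to 0$ identifies $\phi$ with the restriction map and gives $\coker\phi\cong H^1(\cI_{C/\P^n}^2(d))$, so the Proposition is equivalent to the vanishing of this group for $d\ge 3$.
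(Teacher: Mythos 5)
First, a point of comparison: the paper does not actually prove this proposition --- it is imported verbatim from Coskun--Riedl \cite[Theorem 3.1]{CR} --- so your argument is a genuinely independent proof attempt rather than a variant of anything in the text. Your reductions are all sound: the splitting of the target into the $F_{i,j}$-part and the $G_k$-part (which can indeed be chosen independently), surjectivity onto the second factor by projective normality, the reduction to $e=n$, and the block-diagonalization by anti-diagonals $\sigma=l+\tau$. The last of these is the right organizing idea: by Proposition \ref{relations}, each coefficient $a^{(i,j)}_m$ contributes the \emph{same} scalar, with coefficient $1$, to the positions $l\in[i,j-1]$ on the anti-diagonal $\sigma=m+i+j-2$, so the per-$\sigma$ problem really is the interval-indicator spanning problem you describe, with admissible window $a+b\in[\sigma+1-(d-2)n,\,\sigma+1]$ of length $(d-2)n+1$.

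The gap is in the dichotomy at the crucial spanning step, exactly where you flagged uncertainty. Having $\min L_\sigma=1$ does not make all initial segments $[1,k]$ with $k\in L_\sigma$ admissible: $[1,k]$ requires $1+k\ge\sigma+1-(d-2)n$, which fails for small $k$ as soon as $\sigma\ge(d-2)n+2$. Concretely, for $d=3$ and $n\ge 6$, any $\sigma$ with $n+2\le\sigma\le 2n-4$ has $L_\sigma=[1,n-1]$, so \emph{both} of your endpoint conditions hold, yet neither the initial-segment family nor the final-segment family is fully available (for $n=6$, $\sigma=8$: only $[1,2],\dots,[1,5]$ among initial segments and only $[1,5],\dots,[4,5]$ among final segments lie in the window), and choosing ``one of'' the two families does not produce all basis vectors. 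The repair is to use both families simultaneously: writing $v_k$ for the standard basis vectors, initial differences $[1,k]-[1,k-1]$ yield $v_k$ for $k\ge\sigma-(d-2)n+1$, final differences $[k,n-1]-[k+1,n-1]$ yield $v_k$ for $k\le\sigma+1-n$, and these two ranges cover all of $[1,n-1]$ precisely because $(d-2)n\ge n-1$, i.e.\ $d\ge 3$ --- this is where the hypothesis enters, confirming your sharpness remark about $d=2$. Together with the easy one-family cases $\sigma\le n-2$ and $\sigma\ge(d-1)n$, where $L_\sigma$ is a proper subinterval of $[1,n-1]$ and the relevant telescoping family is entirely admissible, your proof then goes through. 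One minor point: the closing identification $\coker\phi\cong H^1(\cI_{C/\P^n}^2(d))$ also uses $H^1(\cI_{C/\P^n}(d))=0$, which holds by projective normality of the rational normal curve, so that remark is correct as a consistency check.
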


It is useful to remark that we can get the $F_{i,j}|_C$ and $G_k|_C$ to be any polynomial in $s,t$ of the corresponding degree, since rational normal curves are projectively normal:

\begin{lemma}\label{projectivelynormal}\cite{ACGH}
For every $k\ge 1$, the map $H^0(\cO_{\P^n}(k))\to H^0(\cO_{C}(k))\cong H^0(\cO_{\P^1}(ek))$, $F\mapsto F|_C$, is surjective.
\end{lemma}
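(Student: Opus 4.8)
The plan is to compute the restriction map directly from the parametrization of $C$, turning the statement into an elementary count of monomials. First I would note that the linear forms $x_{e+1}, \ldots, x_n$ vanish identically on $C$, so for a degree $k$ form $F$ the restriction $F|_C$ depends only on $F \bmod (x_{e+1}, \ldots, x_n)$. Thus the image of $F \mapsto F|_C$ equals the image of the restriction map applied to degree $k$ forms in $x_0, \ldots, x_e$ alone, and it suffices to treat the case $e = n$. In that case $x_i|_C = s^{e-i}t^i$ for $0 \le i \le e$, and a basis for the source is given by the monomials $x_0^{a_0}\cdots x_e^{a_e}$ with $a_i \ge 0$ and $\sum_i a_i = k$.

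Under the substitution $x_i \mapsto s^{e-i}t^i$ such a monomial maps to $s^{ek-m}t^{m}$, where $m := \sum_{i=0}^e i\,a_i$; indeed the $t$-exponent is $\sum_i i\,a_i = m$ and the $s$-exponent is $\sum_i (e-i)a_i = e k - m$. Since $H^0(\cO_{\P^1}(ek))$ has the monomials $\{\,s^{ek-m}t^{m} : 0 \le m \le ek\,\}$ as a basis, surjectivity of the restriction map is equivalent to the claim that every integer $m$ with $0 \le m \le ek$ is realized as $\sum_i i\,a_i$ for some degree $k$ monomial.

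This last claim is the crux, though it is elementary: the value $m = 0$ comes from $a_0 = k$ and the value $m = ek$ from $a_e = k$, while from any monomial of weight $m < ek$ one produces a monomial of weight $m+1$ by moving a single unit of exponent from some $x_i$ with $a_i > 0$ and $i < e$ to $x_{i+1}$, which raises $\sum_i i\,a_i$ by exactly one without changing the total degree. Hence the attainable weights sweep out every integer in $[0, ek]$, so the restricted monomials already span $H^0(\cO_{\P^1}(ek))$ and the map is onto. The only point requiring care is this contiguity of the attainable weights; everything else is formal. (Equivalently, the assertion is the projective normality of the rational normal curve, which one could instead deduce from the vanishing $H^1(\P^n, \cI_{C/\P^n}(k)) = 0$, but the explicit monomial computation is cleanest and matches the coordinates already fixed above.)
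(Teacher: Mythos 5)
Your proof is correct, and it is worth noting that the paper does not actually contain a proof of this lemma: it is delegated to the reference \cite{ACGH} (projective normality of rational normal curves), with only a follow-up remark sketching how to produce explicit preimages, namely by writing each monomial of $F|_C$ as a product of $k$ monomials of degree $e$ in $s,t$, each of which is some $x_i|_C$. Your argument is a complete, self-contained version of that same monomial substitution. The reduction to $e=n$ via the vanishing of $x_{e+1},\ldots,x_n$ on $C$ is fine; the weight computation $x_0^{a_0}\cdots x_e^{a_e}\mapsto s^{ek-m}t^m$ with $m=\sum_i i\,a_i$ is right; and your contiguity step is sound, since the needed index is immediate: if $m<ek$ then not all weight sits on $x_e$, so some $a_i>0$ with $i<e$, and shifting one unit of exponent from $x_i$ to $x_{i+1}$ raises the weight by exactly one. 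The only substantive difference is that you certify attainability of each weight $m\in[0,ek]$ by an inductive chain, whereas the paper's remark amounts to a closed-form preimage: writing $m=qe+r$ with $0\le r<e$ and $q<k$, one has $s^{ek-m}t^m=(t^e)^q(s^{e-r}t^r)(s^e)^{k-q-1}$, i.e., $F=x_e^q\,x_r\,x_0^{k-q-1}$ (and $F=x_e^k$ when $m=ek$). The closed form is what the paper implicitly relies on later when it must exhibit explicit polynomials such as $G_n$ restricting to a prescribed $g$, but for the lemma as stated, the surjectivity your chain argument establishes is all that is needed; either route is complete.
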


Given a polynomial $F|_C\in H^0(\cO_{\P^1}(ek))$, we can easily find an $F$ that restricts to it: write each monomial of $F|_C$ as a product of $k$ monomials of degree $e$. For instance, for $F|_C = s^{ek-2}t^2$ we can write $s^{ek-2}t^2 = s^{e(k-1)}(s^{e-2}t^2)$ and choose $F = x_0^{k-1}x_2$.

\subsection{Restricted tangent bundles of hypersurfaces}

Let $X$ be a degree $d$ hypersurface in $\P^n$ containing the degree $e$ rational normal curve $C$. Say $X$ is defined by a homogeneous polynomial $F$ of degree $d$. We can see the restricted tangent bundle $T_X|_C$ as the kernel of the standard tangent bundle sequence:
\[ 0\longrightarrow T_X|_C\longrightarrow T_{\P^n}|_C\overset{\delta}{\longrightarrow} N_{X/\P^n|_C}\longrightarrow 0. \]

By Proposition \ref{tangent_in_Pn}, this sequence can be written as
\[ 0\longrightarrow T_X|_C\longrightarrow \cO(e+1)^e\oplus \cO(e)^{n-e}\overset{\delta}{\longrightarrow} \cO(de)\longrightarrow 0. \]

By combining the Euler sequence of $\P^n$ restricted to $C$ and the tangent bundle sequence, we can see the map $\delta$ above as the quotient of the gradient of $F$, $\nabla F = \left(\frac{\partial F}{\partial x_0}, \cdots , \frac{\partial F}{\partial x_n}\right )$:
\[\begin{tikzcd}
	&& 0 \\
	&& {\mathcal{O}_{\mathbb{P}^1}} \\
	&& {\mathcal{O}_{\mathbb{P}^n}(e)^{n+1}} & {N_{X/\mathbb{P}^n|_C}} & 0 \\
	0 & {T_X|_C} & {T_{\mathbb{P}^n}|_C} & {N_{X/\mathbb{P}^n|_C}} & 0 \\
	&& 0
	\arrow[from=1-3, to=2-3]
	\arrow["f", from=2-3, to=3-3]
	\arrow["{\nabla F}", from=3-3, to=3-4]
	\arrow[from=3-3, to=4-3]
	\arrow[from=3-4, to=3-5]
	\arrow[equals, from=3-4, to=4-4]
	\arrow[from=4-1, to=4-2]
	\arrow[from=4-2, to=4-3]
	\arrow["\delta", from=4-3, to=4-4]
	\arrow[from=4-3, to=5-3]
	\arrow[from=4-4, to=4-5]
\end{tikzcd}\]

Alternatively, and due to Proposition \ref{surjection_of_phi}, it can be very useful to describe $\delta$ in terms of the map of normal bundles $\psi$. We first describe the maps of the tangent bundle in $\P^n$ by the following commutative diagram, whose rows are the Euler sequences for $\P^1$ and $\P^n$ (see \cite{EV81, GS80}). For this sequence, we need to assume $p \nmid  e$.
\[\begin{tikzcd}
	&& 0 & 0 \\
	0 & {\mathcal{O}_{\mathbb{P}^1}} & {\mathcal{O}_{\mathbb{P}^1}(1)^2} & {T_{\mathbb{P}^1}} & 0 \\
	0 & {\mathcal{O}_{\mathbb{P}^1}} & {\mathcal{O}_{\mathbb{P}^1}(e)^{n+1}} & {T_{\mathbb{P}^n|_C}\cong \cO(e+1)^{e}\oplus \cO(e)^{n-e}} & 0 \\
	&& {N_{C/\mathbb{P}^n}} & {N_{C/\mathbb{P}^n}\cong \cO(e+2)^{e-1}\oplus \cO(e)^{n-e}} \\
	&& 0 & 0
	\arrow[from=1-3, to=2-3]
	\arrow[from=1-4, to=2-4]
	\arrow[from=2-1, to=2-2]
	\arrow["{\binom{s}{t}}", from=2-2, to=2-3]
	\arrow["e"', from=2-2, to=3-2]
	\arrow["{(t, -s)}", from=2-3, to=2-4]
	\arrow["Jf"', from=2-3, to=3-3]
	\arrow[from=2-4, to=2-5]
	\arrow["df"', from=2-4, to=3-4]
	\arrow[from=3-1, to=3-2]
	\arrow["f", from=3-2, to=3-3]
	\arrow[from=3-3, to=3-4]
	\arrow[from=3-3, to=4-3]
	\arrow[from=3-4, to=3-5]
	\arrow["\beta", from=3-4, to=4-4]
	\arrow[equals, from=4-3, to=4-4]
	\arrow[from=4-3, to=5-3]
	\arrow[from=4-4, to=5-4]
\end{tikzcd}\]
In the diagram, $f=(s^e, s^{e-1}t, \cdots , t^e, 0, \cdots , 0)$ is the map defining $C$, and $Jf$ is the Jacobian matrix
\[ Jf = \begin{pmatrix}
\frac{\partial f_0}{\partial s} & \frac{\partial f_0}{\partial t} \\
\vdots & \vdots \\
\frac{\partial f_n}{\partial s} & \frac{\partial f_n}{\partial t} 
\end{pmatrix} = \begin{pmatrix}
es^{e-1} & 0 \\
(e-1)s^{e-2}t & s^{e-1} \\
\vdots & \vdots \\
t^{e-1} & (e-1)st^{e-2} \\
0 & et^{e-1} \\
0 & 0 \\
\vdots & \vdots \\
0 & 0
\end{pmatrix}. \]

We can compute the cokernel of $f$ and use it to show that
\[ df = \left ( s^{e-1}, s^{e-2}t, \cdots , t^{e-1}; 0, \cdots , 0 \right ). \]

Thus, the cokernel of $df$ can be obtained, and we get the map $\beta: T_{\P^n}|_C\to N_{C/\P^n}$,
\[ \beta = \begin{pmatrix}
t & -s &  &  &  &  &  &  &  \\
 & t & -s &  &  &  &  &  &  \\
 &  &  & \ddots &  &  &  &  &  \\
 &  &  &  & t & -s &  &  &  \\
 &  &  &  &  &  & 1 &  &  \\
 &  &  &  &  &  &  & \ddots &  \\
 &  &  &  &  &  &  &  & 1 
\end{pmatrix}: \cO(e+1)^e\oplus \cO(e)^{n-e}\to \cO(e+2)^{e-1}\oplus \cO(e)^{n-e}. \]

The tangent bundle and normal bundle sequences can be related in the following commutative diagram:
\[\begin{tikzcd}
	& 0 & 0 \\
	& {T_{\mathbb{P}^1}} & {T_{\mathbb{P}^1}} \\
	0 & {T_X|_C} & {T_{\mathbb{P}^n}|_C} & {N_{X/\mathbb{P}^n}|_C} & 0 \\
	0 & {N_{C/X}} & {N_{C/\mathbb{P}^n}} & {N_{X/\mathbb{P}^n}|_C} & 0 \\
	& 0 & 0
	\arrow[from=1-2, to=2-2]
	\arrow[from=1-3, to=2-3]
	\arrow[from=2-2, to=3-2]
	\arrow["df", from=2-3, to=3-3]
	\arrow[from=3-1, to=3-2]
	\arrow[from=3-2, to=3-3]
	\arrow[from=3-2, to=4-2]
	\arrow["\delta", from=3-3, to=3-4]
	\arrow["\beta", from=3-3, to=4-3]
	\arrow[from=3-4, to=3-5]
	\arrow[equals, from=3-4, to=4-4]
	\arrow[from=4-1, to=4-2]
	\arrow[from=4-2, to=4-3]
	\arrow[from=4-2, to=5-2]
	\arrow["\psi", from=4-3, to=4-4]
	\arrow[from=4-3, to=5-3]
	\arrow[from=4-4, to=4-5]
\end{tikzcd}\]
which, by Corollary \ref{normalbundlee} and Proposition \ref{tangent_in_Pn} is the same as
\[\begin{tikzcd}
	& 0 & 0 \\
	& {\mathcal{O}(2)} & {\mathcal{O}(2)} \\
	0 & {T_X|_C} & {\mathcal{O}(e+1)^e\oplus \mathcal{O}(e)^{n-e}} & {\mathcal{O}(de)} & 0 \\
	0 & {N_{C/X}} & {\mathcal{O}(e+2)^{e-1}\oplus \mathcal{O}(e)^{n-e}} & {\mathcal{O}(de)} & 0 \\
	& 0 & 0
	\arrow[from=1-2, to=2-2]
	\arrow[from=1-3, to=2-3]
	\arrow[from=2-2, to=3-2]
	\arrow["df", from=2-3, to=3-3]
	\arrow[from=3-1, to=3-2]
	\arrow[from=3-2, to=3-3]
	\arrow[from=3-2, to=4-2]
	\arrow["\delta", from=3-3, to=3-4]
	\arrow["\beta", from=3-3, to=4-3]
	\arrow[from=3-4, to=3-5]
	\arrow[equals, from=3-4, to=4-4]
	\arrow[from=4-1, to=4-2]
	\arrow[from=4-2, to=4-3]
	\arrow[from=4-2, to=5-2]
	\arrow["\psi", from=4-3, to=4-4]
	\arrow[from=4-3, to=5-3]
	\arrow[from=4-4, to=4-5]
\end{tikzcd}\]

That allows us to write the map $\delta$ as the composition $\psi \circ \beta$. The explicit computation for $\psi$ from the previous section gives us a way of finding $\delta$ for any given polynomial $F$ defining $X$. Say we got
\[ \psi = \left (C_1, \cdots , C_{e-1}; G_{e+1}|_C, \cdots , G_n|_C\right ), \]
then we obtain
\[ \delta = \psi\circ \beta = \left ( tC_1, -sC_1+tC_2, -sC_2+tC_3, \cdots , -sC_{e-2}+tC_{e-1}, -sC_{e-1}; G_{e+1}|_C, \cdots , G_{n}|_C \right ). \]

With the matrix of $\delta$, we can use its column relations to explicitly compute the desired restricted tangent bundle $T_X|_C$.

\begin{example}\label{example_n3e3d5}
    Let $n=e=3$ and $d=5$. Let $X$ be the surface defined by $F = x_0^3Q_{1,2} + x_3^3Q_{2,3}$. It induces the map on normal bundles $\psi$ given by the matrix
    \[ \psi = \left ( s^{10}, t^{10} \right ): \cO(5)^2\to \cO(15). \]

    Notice the columns $C_1$ and $C_2$ of $\psi$ above satisfy the relation
    \[ t^{10}\cdot C_1 - s^{10}\cdot C_2 = 0, \]
    which can be used to define an injective map
    \[ \cO(-5)\overset{\begin{psmallmatrix}
        t^{10} \\ -s^{10} 
        \end{psmallmatrix}}{\longrightarrow} \cO(5)^2 \]
    that factors through the kernel of $\psi$. Therefore, we can conclude that it gives the kernel of $\psi$, that is, $N_{C/X}\cong \cO(-5)$.

    With $\psi$ on hands, we can get $\delta: \cO(4)^3\to \cO(15)$, given by
    \[ \delta = \psi\circ \beta = ( s^{10}, t^{10})\cdot \begin{pmatrix}
        t & -s &  \\
         & t & -s 
    \end{pmatrix} = \left ( s^{10}t, -s^{11}+t^{11}, -st^{10} \right ). \]

    The three columns of $\delta$ satisfy the relations
    \begin{itemize}
        \item $s^2\cdot C_1 + st\cdot C_2 + t^2\cdot C_3 = 0$, and
        \item $t^9\cdot C_1 + 0\cdot C_2 + s^9\cdot C_3 = 0$.
    \end{itemize}

    And these relations define the map
    \[ K: \begin{pmatrix}
        s^2 & t^9 \\
        st & 0 \\
        t^2 & s^9 
        \end{pmatrix}: \cO(2)\oplus \cO(-5)\to \cO(4)^3, \]
    which factors through the kernel $T_X|_C$ of $\delta$. One can show $K$ is injective at all points $(s,t)\in \P^1$. Since they have the same rank and degree, we conclude that $T_X|_C\cong \cO(2)\oplus \cO(-5)$. In particular, $X$ is an example of a quintic surface containing the twisted cubic $C$ for which $N_{C/X}$ is balanced, but $T_X|_C$ is not balanced.
\end{example}

\begin{remark}
    Since $\delta$ is the composition of $\psi$ with $\beta$, not every map in $\Hom(\cO(e+1)^e\oplus \cO(e)^{n-e}, \cO(de))$ can be realized as a $\delta$ for some hypersurface $X$. In fact, similarly to the map $\phi$, we can define the homomorphism
    \[ \phi_T: H^0(\cI_C(d))\to \Hom(\cO(e+1)^e\oplus \cO(e)^{n-e}, \cO(de)), \ F\mapsto \delta = \psi\circ \beta, \]
    which is given by taking the composition of $\phi$ with $\beta$. Both $\phi$ and $\phi_T$ share the same kernel $H^0(\cI_C^2(d))$. For $d\ge 3$, $\phi$ is surjective, and a dimension computation shows that the image of $\phi_T$ is a subspace of codimension $(ed-1)$ in $\Hom(\cO(e+1)^e\oplus \cO(e)^{n-e}, \cO(de))$.
\end{remark}

\subsection{Splitting of the tangent bundle for low-degree curves}

The tangent bundle sequence of $C$ in $X$ writes $T_X|_C$ as an extension of $N_{C/X}$ by $T_{\P^1}$:
\[ 0\longrightarrow \cO(2)\longrightarrow T_X|_C\longrightarrow N_{C/X}\longrightarrow 0. \]

We can tell when this sequence splits as a direct sum.

\begin{proposition}\label{split_if_normal_less_than_4}
    If $N_{C/X}\cong \bigoplus_{i=1}^{n-1}\cO(a_i)$ with $a_i < 4$ for all $i$, then $T_X|_C\cong N_{C/X}\oplus \cO(2)$.
\end{proposition}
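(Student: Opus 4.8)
The plan is to observe that the tangent bundle sequence
\[ 0\longrightarrow \cO(2)\longrightarrow T_X|_C\longrightarrow N_{C/X}\longrightarrow 0 \]
exhibits $T_X|_C$ as an extension of $N_{C/X}$ by $\cO(2)$, and that such extensions are classified up to isomorphism by the group $\Ext^1(N_{C/X}, \cO(2))$, the split extension $N_{C/X}\oplus \cO(2)$ being the one corresponding to the zero class. Hence it suffices to prove that this $\Ext$ group vanishes under the hypothesis $a_i < 4$, as this forces every extension — in particular the one above — to split.

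To compute the group, I would decompose $N_{C/X}\cong \bigoplus_{i=1}^{n-1}\cO(a_i)$ and use additivity of $\Ext$ in the first variable together with the identification $\Ext^1_{\P^1}(\cO(a), \cO(b))\cong H^1(\P^1, \cO(b-a))$. The latter comes from the fact that on the smooth curve $\P^1$ the local $\sExt^j$ sheaves vanish for $j>0$, so the local-to-global spectral sequence collapses and $\Ext^1$ reduces to $H^1$ of $\sHom(\cO(a),\cO(b))\cong\cO(b-a)$. This reduces the whole problem to the elementary fact that $H^1(\P^1, \cO(m)) = 0$ precisely when $m\ge -1$.

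The key step is then the numerical bookkeeping. With the sub being $\cO(2)$ and the quotient summand $\cO(a_i)$, the relevant twist is $2 - a_i$, so
\[ \Ext^1(\cO(a_i),\cO(2)) \cong H^1(\P^1, \cO(2-a_i)), \]
which vanishes exactly when $2 - a_i \ge -1$, i.e. when $a_i \le 3$. Since the $a_i$ are integers, the hypothesis $a_i < 4$ is equivalent to $a_i \le 3$, so every summand of $\Ext^1(N_{C/X}, \cO(2)) \cong \bigoplus_{i=1}^{n-1} H^1(\P^1, \cO(2-a_i))$ vanishes. Therefore the extension class of the sequence is zero, and $T_X|_C\cong N_{C/X}\oplus \cO(2)$ as claimed.

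There is essentially no serious obstacle here: once the extension is set up correctly the argument is a one-line cohomology vanishing. The only points requiring care are getting the direction of the $\Ext$ group right — the quotient $N_{C/X}$ sits in the first slot and the sub $\cO(2)$ in the second — and confirming that the threshold is $a_i\le 3$ rather than $a_i \le 4$. The strict inequality $a_i < 4$ in the hypothesis is exactly what is needed, since the boundary value $a_i = 4$ would give the nonzero group $H^1(\P^1,\cO(-2))$ and hence permit a genuinely non-split extension.
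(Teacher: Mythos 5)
Your proof is correct and follows essentially the same route as the paper: both arguments reduce the claim to the vanishing of $\Ext^1(N_{C/X}, \cO(2))$, which forces the tangent bundle sequence to split. The only cosmetic difference is that you verify the vanishing summand-by-summand via $\Ext^1(\cO(a_i),\cO(2))\cong H^1(\P^1,\cO(2-a_i))$, whereas the paper applies Serre duality once to identify the whole group with (the dual of) $H^0(N_{C/X}\otimes\cO(-4))$, which vanishes for the same numerical reason $a_i<4$.
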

\begin{proof}
    By Serre's duality for $\P^1$,
    \[ \Ext^1(N_{C/X}, \cO(2))\cong H^0((N_{C/X}\oplus\cO(-2))\otimes \cO(-2)) = H^0(N_{C/X}\otimes \cO(-4)) = 0 \]
    when $a_i<4$ for all $i$.
\end{proof}

When $X$ is a general hypersurface containing $C$, the normal bundle is balanced. When the degree of $X$ gets large enough with respect to the degree of $C$, the tangent bundle splits as $N_{C/X}\oplus \cO(2)$, so the tangent bundle $T_X|_C$ stops being balanced when $N_{C/X}$ has slope smaller than 1.

\begin{corollary}\label{slope_inequality_splits}
    Let $X$ be a degree $d$ general hypersurface in $\P^n$ containing the rational normal curve $C$ of degree $e$. If 
    \[ \mu (N_{C/X}) = \frac{e(n+1-d)-2}{n-2} \le 3, \]
    then $T_X|_C\cong N_{C/X}\oplus \cO(2)$, where $N_{C/X}$ is the balanced bundle of degree $ne+e-2$ and rank $n-2$. In particular, if $\mu (N_{C/X}) < 1$, then $T_X|_C$ is not balanced.
\end{corollary}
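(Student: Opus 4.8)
The plan is to read off $T_X|_C$ from the tangent extension $0 \to \cO(2) \to T_X|_C \to N_{C/X} \to 0$ using the splitting criterion of Proposition \ref{split_if_normal_less_than_4}, for which the only thing needed is an upper bound on the summands of $N_{C/X}$, plus a short comparison of splitting types for the final assertion.

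First I would record the numerical invariants of $N_{C/X}$. For $X$ general the normal bundle $N_{C/X}$ is balanced (the input quoted just before the statement), so it is determined by its rank and degree. From the normal bundle sequence $0 \to N_{C/X} \to N_{C/\P^n} \to \cO(de) \to 0$ and the identification $N_{C/\P^n} \cong \cO(e+2)^{e-1} \oplus \cO(e)^{n-e}$ of Corollary \ref{normalbundlee}, the bundle $N_{C/X}$ has rank $(n-1)-1 = n-2$ and degree
\[ (e+2)(e-1) + e(n-e) - de = e(n+1-d) - 2, \]
whence $\mu(N_{C/X}) = \frac{e(n+1-d)-2}{n-2}$, as stated.

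Next I would verify the hypothesis of Proposition \ref{split_if_normal_less_than_4}. Writing $N_{C/X} \cong \bigoplus_i \cO(a_i)$, balancedness gives $a_i \le \lceil \mu(N_{C/X}) \rceil$ for all $i$. Assuming $\mu(N_{C/X}) \le 3$, this yields $a_i \le 3 < 4$ for every $i$ (with $a_i = 3$ for all $i$ in the boundary case $\mu(N_{C/X}) = 3$). Proposition \ref{split_if_normal_less_than_4} then applies and gives $T_X|_C \cong N_{C/X} \oplus \cO(2)$, with $N_{C/X}$ the balanced bundle of rank $n-2$ and the degree computed above.

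Finally, for the ``in particular'' clause I would analyze the splitting type of $N_{C/X} \oplus \cO(2)$ directly, noting first that $\mu(N_{C/X}) < 1$ certainly forces $\mu(N_{C/X}) \le 3$, so the decomposition of the previous step holds. Since $N_{C/X}$ is balanced, its smallest summand is $\cO(\lfloor \mu(N_{C/X}) \rfloor)$, and $\mu(N_{C/X}) < 1$ forces $\lfloor \mu(N_{C/X}) \rfloor \le 0$; together with the summand $\cO(2)$ this produces two line bundles in the decomposition of $T_X|_C$ whose degrees differ by at least $2$, so $T_X|_C$ is not balanced. The whole argument is short, and the only non-formal ingredient is the balancedness of $N_{C/X}$ for general $X$; that is the point I would be most careful to pin down (by citation to the result invoked before the statement), while the slope bookkeeping and the comparison $a_i \le 3 < 4$ are routine.
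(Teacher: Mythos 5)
Your proposal is correct and takes essentially the same route as the paper: the paper's proof is exactly the two-step argument of invoking balancedness of $N_{C/X}$ for general $X$ from \cite[Corollary 3.8 and Corollary 4.1]{CR} (so all summands have degree $\lfloor \mu(N_{C/X})\rfloor$ or $\lceil \mu(N_{C/X})\rceil \le 3 < 4$) and then applying Proposition \ref{split_if_normal_less_than_4}, with the ``in particular'' clause following by comparing the summand $\cO(2)$ against the smallest summand $\cO(\lfloor \mu(N_{C/X})\rfloor)$ exactly as you do. Your explicit degree computation $e(n+1-d)-2$ is the right one and consistent with the slope formula in the hypothesis (the figure $ne+e-2$ in the corollary's statement appears to be a typo, as confirmed by the degree given in Corollary \ref{split_in_degree_n}); the rest of your bookkeeping just makes explicit what the paper leaves implicit.
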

\begin{proof}
    If $X$ is general, then by \cite[Corollary 3.8 and Corollary 4.1]{CR}, the normal bundle $N_{C/X}$ is balanced, hence it is a sum of line bundles of degrees $\lfloor \mu(N_{C/X}) \rfloor$ and $\lceil \mu(N_{C/X}) \rceil$. Then the claim follows from Proposition \ref{split_if_normal_less_than_4}.
\end{proof}

We can explore the inequality in Corollary \ref{slope_inequality_splits} to highlight some cases when the restricted tangent bundle splits.

\begin{corollary}\label{split_in_degree_n}
    Let $X$ be a general hypersurface of degree $d$ in $\P^n$ containing the rational normal curve $C$ of degree $e\le n$. If $d\ge n = 3$ or $d+1\ge n\ge 4$, then $T_X|_C\cong N_{C/X}\oplus \cO(2)$, where $N_{C/X}$ is the balanced bundle of degree $e(n+1-d)-2$ and rank $n-2$. In particular, if $e<n=d$ or $e\le n\le d-1$, then $T_X|_C$ is not balanced for any hypersurface $X$.
\end{corollary}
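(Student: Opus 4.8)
The plan is to reduce everything to Corollary \ref{slope_inequality_splits}, whose single hypothesis is the inequality $\mu(N_{C/X}) = \frac{e(n+1-d)-2}{n-2}\le 3$. The splitting part of the statement then becomes a purely arithmetic check: verify that this inequality holds throughout the two ranges $d\ge n=3$ and $d+1\ge n\ge 4$, subject to $e\le n$. Once it holds, Corollary \ref{slope_inequality_splits} directly delivers $T_X|_C\cong N_{C/X}\oplus\cO(2)$ with $N_{C/X}$ balanced of rank $n-2$ and degree $e(n+1-d)-2$, for general $X$.

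For the verification I would organize the computation by the size of $n+1-d$. When $n=3$ the normal bundle has rank $1$ and $\mu=e(4-d)-2\le e-2\le 1$ for $d\ge 3$ and $e\le 3$. When $n\ge 4$ and $d\ge n-1$ there are three subcases: if $d\ge n+1$ then $n+1-d\le 0$ and $\mu\le\frac{-2}{n-2}<0$; if $d=n$ then $\mu=\frac{e-2}{n-2}\le 1$ since $e\le n$; and the tight subcase $d=n-1$ gives $\mu=\frac{2(e-1)}{n-2}\le\frac{2(n-1)}{n-2}=2+\frac{2}{n-2}\le 3$ for $n\ge 4$, with equality exactly at $(n,d,e)=(4,3,4)$. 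Thus $\mu\le 3$ everywhere, and the boundary case is still admissible because Corollary \ref{slope_inequality_splits} only asks for the non-strict inequality.

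For the concluding claim that $T_X|_C$ is unbalanced \emph{for any} hypersurface $X$, I would not route through the balancedness of $N_{C/X}$ (which is only available for general $X$), but argue directly from the tangent sequence $0\to\cO(2)\to T_X|_C\to N_{C/X}\to 0$, which is valid for every smooth $X\supset C$. The inclusion $\cO(2)=T_C\hookrightarrow T_X|_C$ forces the largest summand of $T_X|_C$ to have degree at least $2$. On the other hand, by Proposition \ref{tangent_in_Pn} one has $\deg T_X|_C=\deg T_{\P^n}|_C-de=e(n+1-d)$, with $\rk T_X|_C=n-1$. In the two hypothesis cases this degree is small: $e<n=d$ gives $\deg T_X|_C=e\le n-1$, and $e\le n\le d-1$ gives $\deg T_X|_C=e(n+1-d)\le 0$. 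In either case the slope of $T_X|_C$ is at most $1$, so a balanced bundle of this rank and degree would have every summand of degree $\le 1$, contradicting the summand of degree $\ge 2$. Hence $T_X|_C$ is unbalanced for every $X$.

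The only genuinely subtle point is this last distinction between ``general $X$'' and ``any $X$'': the splitting statement really does need $N_{C/X}$ balanced, hence a general $X$, whereas the failure of balancedness has to be established for all $X$ at once, which is why I replace the slope argument on $N_{C/X}$ by the degree count built on the universal sub-line-bundle $\cO(2)\hookrightarrow T_X|_C$. Everything else is routine bookkeeping, the only place requiring care being the equality $\mu=3$ at $(n,d,e)=(4,3,4)$, which sits precisely on the boundary permitted by Corollary \ref{slope_inequality_splits}.
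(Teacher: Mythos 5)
Your proposal is correct and takes essentially the same route as the paper: the splitting statement is exactly the arithmetic check that $\mu(N_{C/X})\le 3$ throughout the two ranges followed by Corollary \ref{slope_inequality_splits}, and your care at the boundary case $(n,d,e)=(4,3,4)$, where $\mu=3$ is still allowed because Proposition \ref{split_if_normal_less_than_4} only needs all summands of degree $<4$, is exactly right. Your direct degree count on $T_X|_C$ for the ``not balanced for any $X$'' clause is precisely the content of Proposition \ref{tangent_degree_less_than_2} (note that $e<n=d$ and $e\le n\le d-1$ are the cases $e\le\frac{n-1}{n+1-d}$ and $n<d$ of that proposition), so your resolution of the general-versus-arbitrary-$X$ subtlety coincides with the argument the paper has in place.
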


We can also find cases when $T_X|_C$ cannot be balanced directly from the tangent bundle sequence. Notice that $T_X|_C$ is not balanced when $X$ is not Fano.
\begin{proposition}\label{tangent_degree_less_than_2}
    Let $X$ be a degree $d$ hypersurface containing a degree $e$ rational curve $C$. If
    \[ \mu(T_X|_C) = \frac{e(n+1-d)}{n-1} \le 1, \]
    then $T_X|_C$ is not balanced. Therefore, if $n < d$, or $n\ge d$ and $e\le \frac{n-1}{n+1-d}$, then $T_X|_C$ is not balanced.
\end{proposition}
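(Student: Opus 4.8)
The plan is to play the tangent direction of $C$ against the slope constraint: the differential of a parametrization always embeds a degree-$2$ line bundle into $T_X|_C$, whereas a balanced bundle of slope at most $1$ can only have summands of degree at most $1$. These two facts are incompatible, so balancedness must fail.

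First I would parametrize $C$ by a nonconstant map $f\colon \P^1\to X$ and read $T_X|_C$ as $f^*T_X$, a rank $n-1$ bundle on $\P^1$. Its differential furnishes a nonzero sheaf map $df\colon T_{\P^1}\cong \cO(2)\to T_X|_C$ (this is the inclusion $\cO(2)\hookrightarrow T_X|_C$ of the tangent bundle sequence recalled above). Writing $T_X|_C\cong\bigoplus_{i=1}^{n-1}\cO(a_i)$ with $a_1\le\cdots\le a_{n-1}$, a nonzero map $\cO(2)\to\bigoplus_i\cO(a_i)$ must have a nonvanishing component $\cO(2)\to\cO(a_i)$, and such a component exists only when $a_i\ge 2$; hence $a_{n-1}=\max_i a_i\ge 2$. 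On the other hand, if $T_X|_C$ were balanced then, as recalled in Section \ref{section_vector_bundles}, every $a_i$ would equal $\lfloor\mu\rfloor$ or $\lceil\mu\rceil$ for $\mu=\mu(T_X|_C)$, so $a_{n-1}=\lceil\mu\rceil$. The hypothesis $\mu\le 1$ gives $\lceil\mu\rceil\le 1<2$, contradicting $a_{n-1}\ge 2$; therefore $T_X|_C$ is not balanced.

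For the closing numerical claim I would just check that each listed condition forces $\mu\le 1$. Adjunction gives $\det T_X\cong -K_X\cong\cO_X(n+1-d)$, so $\deg(T_X|_C)=e(n+1-d)$ and $\mu=\frac{e(n+1-d)}{n-1}$. If $n<d$ then $n+1-d\le 0$ and $\mu\le 0\le 1$; if instead $n\ge d$ then $n+1-d\ge 1>0$, so $e\le\frac{n-1}{n+1-d}$ is equivalent to $e(n+1-d)\le n-1$, i.e. $\mu\le 1$. In either case the first part of the proposition applies.

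The only delicate point I anticipate is guaranteeing $df\ne 0$ even when $C$ is singular or $f$ fails to be an immersion; this is fine because $f$ is separable (being birational onto its image) and nonconstant, so $df$ does not vanish identically, and any nonzero map out of the line bundle $\cO(2)$ is automatically injective. Everything else is routine bookkeeping.
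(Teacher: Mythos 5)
Your proof is correct and follows essentially the same route as the paper: both arguments derive the contradiction from the injection $\cO(2)\hookrightarrow T_X|_C$ coming from the tangent bundle sequence (equivalently, the differential $df$), which is impossible for a balanced bundle of slope at most $1$, and then verify the numerical conditions force $\mu(T_X|_C)\le 1$. Your extra care about separability of $f$ in positive characteristic (covered by the paper's standing hypothesis $p\nmid e$ and birationality of the normalization) is a sound addition but does not change the argument.
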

\begin{proof}
    If $T_X|_C$ is balanced, then it is a sum of line bundles of degrees $\lfloor \mu(T_X|_C) \rfloor$ and $\lceil \mu(T_X|_C) \rceil$. So, if $\mu(T_X|_C)\le 1$, we could not have an injection $\cO(2)\to T_X|_C$, a contradiction.
\end{proof}

\subsection{Vector bundles on degenerations of rational curves}

Once we know the splitting type of the restricted tangent bundle for some curves on $X$, we can glue and smooth them to obtain higher-degree rational curves with a ``controlled'' restricted tangent bundle. If $C_1, C_2$ are rational curves on $X$ with $T_X|_{C_1}$ balanced and $T_X|_{C_2}$ perfectly balanced, we can get a curve of degree $\deg C_1+\deg C_2$ with a balanced restricted tangent bundle.

We summarize this in the following lemma on specialization of vector bundles on $\P^1$ to a gluing of two smooth rational curves. We refer to \cite{Smith_vector_bundles_trees_rational_curves} for a more complete discussion on the possible specializations of vector bundles on trees of rational curves.

\begin{lemma}\label{gluing_balanced_curves}(see \cite[Theorem 1.2]{Smith_vector_bundles_trees_rational_curves})
    Let $C = C_1\cup C_2$ be a nodal curve with $C_1, C_2\cong \P^1$ intersecting at one point $p$. Let $E$ be a rank $r$ vector bundle on $C$ such that 
    \[ E|_{C_1}\cong \bigoplus_{i=1}^r\cO(a_i) \  \text{ and } \ E|_{C_2}\cong \bigoplus_{i=1}^r\cO(b_i) \]
    with $\{a_i\}$ and $\{b_i\}$ in non-decreasing order. Assume that $E$ is the specialization of a vector bundle $E'$ on $\P^1$. Then the ``most unbalanced'' (in the sense of Lemma \ref{lemma_balanced_is_open_condition}) that $E'$ can be is
    \[ E'\cong \bigoplus_{i=1}^r\cO(a_i+b_i). \]

    In particular, if $E|_{C_1}$ is balanced, and $E|_{C_2}$ is perfectly balanced, then $E'$ is balanced.
\end{lemma}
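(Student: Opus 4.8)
The plan is to reduce the statement to the numerical criterion of Lemma~\ref{lemma_balanced_is_open_condition} and then to verify the required partial–sum inequalities by degenerating subbundles in a one–parameter family. Since both $\{a_i\}$ and $\{b_i\}$ are non-decreasing, so is $\{a_i+b_i\}$, and the total degrees agree:
\[ \deg E' = \deg E = \deg\left(E|_{C_1}\right) + \deg\left(E|_{C_2}\right) = \sum_i a_i + \sum_i b_i = \sum_i (a_i+b_i). \]
Writing $E'\cong\bigoplus_{i=1}^r \cO(c_i)$ with $c_1\le\cdots\le c_r$, Lemma~\ref{lemma_balanced_is_open_condition} tells us it is enough to show that $E'$ specializes to $\bigoplus_i\cO(a_i+b_i)$, i.e. that $\sum_{i=1}^k c_i \ge \sum_{i=1}^k (a_i+b_i)$ for all $k$. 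Because the total degrees coincide, these bottom inequalities are equivalent to the complementary top inequalities $\sum_{i=r-k+1}^{r} c_i \le \sum_{i=r-k+1}^{r}(a_i+b_i)$ for all $k$. If $s_k(F)$ denotes the maximal degree of a rank-$k$ subbundle of a bundle $F$ on $\P^1$, then for $F=\bigoplus\cO(c_i)$ this quantity is exactly $\sum_{i=r-k+1}^r c_i$ (the top $k$ summands). Thus the entire statement reduces to the single assertion
\[ s_k(E') \le s_k\left(E|_{C_1}\right) + s_k\left(E|_{C_2}\right) \qquad \text{for every } 1\le k\le r. \]

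To prove this, I would realize $E$ as a flat limit. Let $\pi\colon \mathcal{C}\to B$ be a one-parameter degeneration over a smooth curve $B$ with smooth total surface $\mathcal{C}$ (local model $xy=t$), general fiber $\P^1$, and central fiber $C=C_1\cup C_2$, carrying a bundle $\mathcal{E}$ with $\mathcal{E}|_{\P^1}=E'$ and $\mathcal{E}|_{C}=E$. Fix $k$ and a rank-$k$ subbundle $V\subseteq E'$ of degree $s_k(E')$ on the general fiber. Taking the saturation of the closure of $V$ inside $\mathcal{E}$ produces a subsheaf $\mathcal{V}\subseteq\mathcal{E}$ whose quotient $\mathcal{E}/\mathcal{V}$ is torsion-free on the surface $\mathcal{C}$; being reflexive on a smooth surface, $\mathcal{V}$ is locally free, and torsion-freeness of $\mathcal{E}/\mathcal{V}$ (no subsheaf supported on a fiber) forces $\mathcal{E}/\mathcal{V}$, hence also $\mathcal{V}$, to be flat over $B$. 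Since all fibers of $\pi$ are numerically equivalent, $\deg(\mathcal{V}|_{\mathcal{C}_b})$ is independent of $b$, and as the central fiber class is $[C_1]+[C_2]$ we get
\[ s_k(E') = \deg V = \deg\left(\mathcal{V}|_{C_1}\right) + \deg\left(\mathcal{V}|_{C_2}\right). \]

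It remains to bound each $\deg(\mathcal{V}|_{C_i})$. Restricting $\mathcal{V}\hookrightarrow\mathcal{E}$ to $C_i$ gives a map $\mathcal{V}|_{C_i}\to E|_{C_i}$ whose kernel, by the long exact $\otimes\,\cO_{C_i}$ sequence and local freeness of $\mathcal{E}$, is a quotient of $\sTor_1(\mathcal{E}/\mathcal{V},\cO_{C_i})$; this $\sTor$ sheaf is torsion (supported on the finite locus where $\mathcal{E}/\mathcal{V}$ is not locally free), while $\mathcal{V}|_{C_i}$ is locally free on $C_i\cong\P^1$ and hence torsion-free, so the restricted map is injective. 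Therefore $\mathcal{V}|_{C_i}$ is a rank-$k$ subsheaf of $E|_{C_i}$ and $\deg(\mathcal{V}|_{C_1})\le s_k(E|_{C_1})=\sum_{j=r-k+1}^r a_j$, and likewise $\deg(\mathcal{V}|_{C_2})\le\sum_{j=r-k+1}^r b_j$. Combining with the displayed identity yields $s_k(E')\le\sum_{j=r-k+1}^r(a_j+b_j)$ for all $k$, which by the reduction above and Lemma~\ref{lemma_balanced_is_open_condition} gives the first claim. The final assertion is then immediate: the termwise sum of a balanced type and a perfectly balanced type is again balanced, and the balanced type is the unique maximal element of the specialization order for its rank and degree, so any $E'$ specializing to it must equal it.

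The step I expect to be the main obstacle is the construction of $\mathcal{V}$ together with the control of its restrictions to the two components: one must guarantee that the flat limit of $V$ acquires no spurious fiber-torsion and does not collapse in rank along an entire component, which is exactly where the smoothness of $\mathcal{C}$ (giving reflexive $=$ locally free) and the torsion-freeness of $\mathcal{E}/\mathcal{V}$ (giving flatness over $B$ and injectivity after restriction) are doing the real work. By contrast, the numerical bookkeeping relating bottom and top partial sums through the total-degree identity, and the passage from ``specializes to a balanced bundle'' to ``is balanced,'' are routine consequences of Lemma~\ref{lemma_balanced_is_open_condition}.
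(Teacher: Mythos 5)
Your proof is correct, but it takes a genuinely different route from the paper's. The paper argues cohomologically: it twists $E$ by line bundles $L\cong\cO(-a_i,-b_i-1)$ of suitable bidegree, uses the sequence $0\to E|_{C_1}(-p)\to E\to E|_{C_2}\to 0$ to force vanishing of $h^0$ or $h^1$ of $E\otimes L$, and then invokes the upper semicontinuity inequalities $h^j(C,E\otimes L)\ge h^j(\P^1,E'(\deg L))$ to constrain the summands of $E'$; only the extreme cases ($i=1$ bounding the smallest summand, $i=r$ the largest) are written out, with the intermediate partial-sum inequalities left to ``repeating the argument'' and to the cited theorem of Smith. You instead prove the equivalent top-partial-sum inequalities $s_k(E')\le s_k(E|_{C_1})+s_k(E|_{C_2})$ by a Langton-style degeneration of a maximal rank-$k$ subbundle: saturating its closure in $\mathcal{E}$, using reflexive $=$ locally free on a smooth surface, constancy of $c_1(\mathcal{V})\cdot[\mathcal{C}_b]$ across numerically equivalent fibers, and the $\sTor_1$ argument for injectivity of $\mathcal{V}|_{C_i}\to E|_{C_i}$ — all of which is sound, and has the merit of actually establishing every partial-sum inequality in the dominance order rather than only the endpoint bounds (note that the paper's ``in particular'' clause needs only those endpoint bounds, which is why its shorter argument suffices there). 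The one step you assume rather than justify is the local model $xy=t$: the hypothesis hands you some one-parameter family, and its total space may instead have an $A_{m-1}$-singularity $xy=t^m$ at the node. This is harmless but deserves a sentence: pass to the minimal resolution, whose central fiber is the reduced chain $C_1\cup R_1\cup\cdots\cup R_{m-1}\cup C_2$; each exceptional curve $R_j$ is contracted to the node, so $\mathcal{E}$ pulls back trivially on $R_j$, whence $\deg(\mathcal{V}|_{R_j})\le 0$ and the inequality $s_k(E')\le s_k(E|_{C_1})+s_k(E|_{C_2})$ survives with these extra non-positive contributions. With that remark added, your proof is complete and somewhat more self-contained than the paper's.
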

\begin{proof}
    The obstructions in the splitting type of $E'$ come from the upper semicontinuity conditions:
    \[ h^0(C,E\otimes L)\ge h^0(\P^1,E'(\deg L)) \ \text{ and } \ h^1(C,E\otimes L)\ge h^1(\P^1,E'(\deg L)) \]
    for all line bundles $L$ on $C$. We have an exact sequence 
    \[ 0\longrightarrow E|_{C_1}(-p)\longrightarrow E\longrightarrow E|_{C_2}\longrightarrow 0. \]

    Denote by $\cO(a,b)$ the line bundle on $C$ that has degree $a$ on $C_1$ and degree $b$ on $C_2$. By twisting the exact sequence above by $L\cong \cO(-a_1,-b_1-1)$ and taking cohomologies, we get $h^1(C,E\otimes L) = 0$, hence $0\ge h^1(\P^1, E'(-a_1-b_1-1))$, thus $E'$ does not have summands of degree less than $a_1+b_1$. Similarly, if we take $L\cong \cO(-a_r,-b_r-1)$, we obtain $h^0(E\otimes L) = 0$, so $0\ge h^0(\P^1, E'(-a_r-b_r-1))$, thus $E'$ cannot have summands of degree larger than $a_r+b_r$. We repeat the argument for the other degrees to conclude the lemma.
\end{proof}

\subsection{Modular interpolation of rational curves}\label{Section_modular_interpolation}

An important property of a curve $C$ on a variety $X$ of dimension $n$ is its capacity to \textit{interpolate} a given number of general points in $X$ by deformations of $C$. We can make sense of this in terms of the space of curves on a variety going through $m$ general points of $X$, or in terms of morphisms $C\to X$ that send $m$ marked points in the curve $C$ to a fixed set of $m$ points in $X$. The first case is often called the \textit{interpolation property}, and is controlled by the normal bundle $N_{C/X}$. The latter is called \textit{modular interpolation}, and is connected to the positivity of the restricted tangent bundle $T_X|_C$. Both are studied for arbitrary genus curves in \cite{Interpolation_of_curves_on_Fano_hypersurfaces}. Since we are working with the tangent bundle here, we will use the word ``interpolation" as a synonym for modular interpolation.

In our case of rational curves and tangent bundles, we deal with maps $f: \P^1\to X$ and ask what is the maximum number $m$ of general points $x_1,\ldots , x_m$ of $X$ we can deform $f$ so that $f(p_i)=x_i$ for given $m$ general points $p_1, \ldots , p_m\in \P^1$. Let $f^*T_X\cong \bigoplus_{i=1}^n\cO(a_i)$ with $a_1\le \cdots \le a_n$ be the splitting of the restricted tangent bundle of such a map $f$. The space of morphisms $\P^1\to X$ with $p_i\mapsto x_i$ for $i=1,\ldots , m$ has tangent space at $[f]$ isomorphic to
\[ T_{[f]}\mathrm{Mor}(\P^1,X; p_i\mapsto x_i)\cong H^0(\P^1, f^*T_X\left (-p_1-\cdots -p_m)\right ) = H^0\left (\P^1, \bigoplus_{i=1}^{n}\cO(a_i-m)\right ), \]
and deformations of $f$ fixing $p_i\mapsto x_i$ will dominate $X$ if $a_1\ge m$ (see \cite[Corollary II.3.5.4]{Kollar_rational_curves_on_algebraic_varieties}). In this case, we can choose an additional point of $X$ for $f$ to interpolate. Hence, a curve $f$ with $f^*T_X\cong \bigoplus_{i=1}^n\cO(a_i)$ will interpolate up to $a_1+1$ general points in $X$. Equivalently, $f$ interpolates $m$ points while $H^1(f^*T_X(-m)) = 0$.

Notice that, among the vector bundles of $\P^1$ with fixed rank and degree, the balanced bundle has the largest $a_1$. In this sense, curves with a balanced restricted tangent bundle are the ones that interpolate the most points (see \cite[Corollary 20]{Interpolation_of_curves_on_Fano_hypersurfaces}). Observe that this maximum number of points is
\[ \lfloor \mu(f^*T_X)\rfloor + 1 = \left \lfloor \frac{-\deg f^*K_X}{n} \right \rfloor + 1. \]

When $X$ is a degree $d$ hypersurface in $\P^n$, and $f: \P^1\to X$ is a degree $e$ rational curve, the maximum number of points $f$ can interpolate is
\[ \left \lfloor \frac{e(n+1-d)}{n-1} \right \rfloor + 1, \]
which is achieved when $f^*T_X$ is balanced.

\begin{example}
    Degree $n$ rational normal curves $C$ in $\P^n$ have the nice properties \cite[Chapter 1]{Harris_First_Course}:
    \begin{itemize}
        \item Through $n+3$ points in linear general position in $\P^n$, there exists a unique rational normal curve;
        \item Given $n+2$ points $x_i$ in linear general position in $\P^n$, and $n+2$ distinct points $p_i\in \P^1$, there exists a unique rational normal curve $f:\P^1\to \P^n$ such that $f(p_i)=x_i$.
    \end{itemize}

    They correspond, respectively, to the splitting of the normal bundle $N_{C/\P^n}\cong \cO(n+2)^{n-1}$ and of the restricted tangent bundle $T_{\P^n}|_C\cong \cO(n+1)^n$.
\end{example}

\section{The inductive method}

The process to obtain a hypersurface $X$ with balanced restricted tangent bundle is done by first approaching the case $e=n$, and then doing induction on $n\ge e$.

For $e=n$, we have the tangent bundle sequence:
\[ 0\longrightarrow T_X|_C \overset{K_F}{\longrightarrow} \cO(n+1)^n \overset{\delta_F}{\longrightarrow}\cO(dn) \longrightarrow 0. \]

The strategy is to choose the appropriate polynomial $F$ so that its kernel $T_X|_C$ is balanced. To compute the kernel, we find independent \textit{column relations} satisfied by $\delta_F$. Then, create a matrix $K_{F}$ whose columns are the coefficients of the column relations. This construction implies that the map defined by $K_{F}$ factors through the kernel of $\delta_F$. Since they agree in rank and degree, it suffices to show that $K_{F}$ has maximum rank at every point $(s,t)$ of $\P^1$ to conclude that $K_{F}$ gives the kernel $T_X|_C$ of $\delta_F$.

\begin{example}\label{d3,e3,n3}
    Let $d=3$, $e=n=3$. The degree $3$ polynomial $F = x_0Q_{1,2} + x_3Q_{2,3}$ induces the map
    \[ \delta_F = \left(s^4t, -s^5+t^5, -st^4 \right ): \cO(4)^3\to \cO(9). \]\

    The columns $C_1, C_2, C_3$ of $\delta_F$ satisfy the relations:
    \begin{itemize}
        \item $s^2\cdot C_1 + st\cdot C_2 + t^2\cdot C_3 = 0;$
        \item $t^3\cdot C_1 + 0\cdot C_2 + s^3\cdot C_3 = 0$.
    \end{itemize}

    We use them as columns for the matrix
    \[ K_{F} = \begin{bmatrix}
            t^3 & s^2 \\
            0 & st\\
            s^3 & t^2 
            \end{bmatrix}. \]

    Notice that $K_{F}$ has maximum rank $2$ for all points $(s,t)\in \P^1$, hence the map
    $K_{F}: \cO(1)\oplus \cO(2)\to \cO(4)^3$ is the kernel of $\delta_F$, thus $T_X|_C\cong \cO(1)\oplus \cO(2)$.
\end{example}

Once the case $e=n$ is done, we approach the case $n>e$ by induction on $n$. First, observe that the rational normal curve $C$ spans a linear space $\Lambda\cong \P^e$, and that a general degree $d$ hypersurface $X\subset \P^n$ containing $C$ restricts to a general degree $d$ hypersurface $Y = X\cap \Lambda$ of $\P^e$. The inclusions define the following diagram of tangent bundle sequences:
\[\begin{tikzcd}
	& 0 & 0 \\
	0 & {T_Y|_C} & {\mathcal{O}(e+1)^e} & {\mathcal{O}(de)} & 0 \\
	0 & {T_X|_C} & {\mathcal{O}(e+1)^e\oplus \mathcal{O}(e)^{n-e}} & {\mathcal{O}(de)} & 0 \\
	& {\mathcal{O}(e)^{n-e}} & {N_{\mathbb{P}^e/\mathbb{P}^n}\cong \mathcal{O}(e)^{n-e}} \\
	& 0 & 0
	\arrow[from=1-2, to=2-2]
	\arrow[from=1-3, to=2-3]
	\arrow[from=2-1, to=2-2]
	\arrow["K_{f}", from=2-2, to=2-3]
	\arrow[from=2-2, to=3-2]
	\arrow["{\delta_f}", from=2-3, to=2-4]
	\arrow[from=2-3, to=3-3]
	\arrow[from=2-4, to=2-5]
	\arrow[equals, from=2-4, to=3-4]
	\arrow[from=3-1, to=3-2]
	\arrow["{K_{F}}", from=3-2, to=3-3]
	\arrow[from=3-2, to=4-2]
	\arrow["{\delta_F = (\delta_f; g)}", from=3-3, to=3-4]
	\arrow[from=3-3, to=4-3]
	\arrow[from=3-4, to=3-5]
	\arrow[equals, from=4-2, to=4-3]
	\arrow[from=4-2, to=5-2]
	\arrow[from=4-3, to=5-3]
\end{tikzcd}\]
where the map $\cO(e+1)^e\to \cO(e+1)^e\oplus \cO(e)^{n-e}$ is the identity on the first $e$ entries and zero elsewhere since $T_{\P^n}|_C\cong T_{\P^e}|_C\oplus N_{\P^e/\P^n}$.

If $X$ is defined by a polynomial $F = \sum_{i<j}F_{i,j}Q_{i,j} + \sum_{k=e+1}^{n}G_kx_k$ in $k[x_0,\ldots,x_n]$, then $Y$ is defined by the polynomial $f=\sum_{i<j}F_{i,j}|_{\Lambda}Q_{i,j}$ in $k[x_0,\ldots ,x_e]$. Thus, the map $\delta_F$ coincides with $\delta_f$ in its first $e$ entries; the last $(n-e)$ entries are the ones defined by the forms $G_k$. That is, we have $\delta_F = \left (\delta_f; g\right )$, where $g = \left ( G_{e+1}|_C,\cdots , G_n|_C\right )$. Our strategy is to work the diagram backwards, and use $f$ to inductively recover an $F$ so that $T_X|_C$ is balanced.

Suppose, by induction hypothesis, there exists a degree $d$ hypersurface $Y\subset \P^{n-1}$, for some $n > e$, defined by a polynomial $f\in k[x_0,\ldots , x_{n-1}]$ with $T_Y|_C$ balanced. It comes with its tangent bundle sequence
\[ 0\longrightarrow T_Y|_C \overset{K_{f}}{\longrightarrow} \cO(e+1)^e\oplus \cO(e)^{n-e-1} \overset{\delta_f}{\longrightarrow}\cO(de) \longrightarrow 0. \]

Now, say that $E$ is the balanced vector bundle of the same rank and degree as $T_X|_C$, that is, if $T_X|_C$ is balanced, then we should have $T_X|_C\cong E$. We then look for a pair of injective maps
\[ J: T_Y|_C\longrightarrow E \ \text{ and } \ N_1: E \longrightarrow \cO(e+1)^e\oplus \cO(e)^{n-e-1} \]
so that $K_{f} = N_1\cdot J$. By rank and degree considerations, the cokernel $N_2$ of $J$ maps $E$ to $\cO(e)$. These give us the map
\[ N = {\scriptscriptstyle\left(\begin{array}{c} N_1\\ \hline N_2\end{array}\right )}: E\to \cO(e+1)^e\oplus \cO(e)^{n-e-1} \]
that makes the following diagram commute:
\[\begin{tikzcd}
	& 0 & 0 \\
	0 & {T_Y|_C} & {\mathcal{O}(e+1)^e\oplus \mathcal{O}(e)^{n-e-1}} & {\mathcal{O}(de)} & 0 \\
	0 & E & {\mathcal{O}(e+1)^e\oplus \mathcal{O}(e)^{n-e}} & {\mathcal{O}(de)} & 0 \\
	& {\mathcal{O}(e)} & {\mathcal{O}(e)} \\
	& 0 & 0
	\arrow[from=1-2, to=2-2]
	\arrow[from=1-3, to=2-3]
	\arrow[from=2-1, to=2-2]
	\arrow["{K_{f}}", from=2-2, to=2-3]
	\arrow["J"', from=2-2, to=3-2]
	\arrow["{\delta_f}", from=2-3, to=2-4]
	\arrow[from=2-3, to=3-3]
	\arrow[from=2-4, to=2-5]
	\arrow[from=3-1, to=3-2]
	\arrow["N", from=3-2, to=3-3]
	\arrow["N_2"', from=3-2, to=4-2]
	\arrow["\delta", from=3-3, to=3-4]
	\arrow[from=3-3, to=4-3]
	\arrow[from=3-4, to=3-5]
	\arrow[equals, from=4-2, to=4-3]
	\arrow[from=4-2, to=5-2]
	\arrow[from=4-3, to=5-3]
\end{tikzcd}\]

Define $\delta$ the cokernel of $N$. By the commutativity of the diagram, up to a change of basis $\delta$ coincides with $\delta_f$ in its first $n-1$ entries, that is,
\[ \delta = \left(\delta_f; g\right ) \]
for some $g: \cO(e)\to \cO(de)$. By Lemma \ref{projectivelynormal}, there exists a degree $d-1$ polynomial $G_{n}$ so that 
\[ F = f + G_nx_n \]
defines $\delta_F = (\delta_f; g) = \delta$. Therefore, $N$ is the kernel of $\delta_F$, hence, $F$ defines $X$ with $T_X|_C\cong E$.

The following proposition shows that we can always obtain $X$ from $Y$.

\begin{proposition}\label{Ext_Proposition}
    Let $n > e$ and $Y\subset \P^{n-1}$ be a degree $d$ hypersurface containing the rational normal curve $C$ of degree $e$. Then, for any extension
    \[ 0\longrightarrow T_Y|_C\longrightarrow E\longrightarrow \cO(e)\longrightarrow 0 \]
    of $\cO(e)$ by $T_Y|_C$, there exists a degree $d$ hypersurface $X\subset \P^n$ such that $T_X|_C\cong E$. In particular, if $T_Y|_C$ is balanced for a general $Y$, then $T_X|_C$ is balanced for a general $X$.
\end{proposition}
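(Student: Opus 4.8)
The plan is to reduce the statement to the surjectivity of a connecting homomorphism $\Hom \to \Ext^1$ on $\P^1$, which in turn follows from a single $\Ext^1$-vanishing. Throughout write $P := T_{\P^{n-1}}|_C\cong \cO(e+1)^e\oplus \cO(e)^{n-1-e}$ and $P' := T_{\P^n}|_C\cong P\oplus \cO(e)$, so that $Y$ sits in the tangent sequence $0\to T_Y|_C\xrightarrow{K_f} P\xrightarrow{\delta_f}\cO(de)\to 0$. As computed in the preamble, a hypersurface $X=V(f+G_nx_n)\subset\P^n$ induces a map of the shape $\delta_F=(\delta_f; g):P'\to\cO(de)$ with $g=G_n|_C\in\Hom(\cO(e),\cO(de))=H^0(\cO((d-1)e))$, and $T_X|_C=\ker\delta_F$. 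Conversely, by projective normality (Lemma \ref{projectivelynormal}, with $k=d-1$) every such $g$ is realized by some degree $d-1$ form $G_n$. Hence it suffices to find $g$ with $\ker(\delta_f; g)\cong E$.

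The homological core is as follows. For any $g$, projection onto the last coordinate exhibits $\ker(\delta_f; g)$ as an extension $0\to T_Y|_C\to \ker(\delta_f; g)\to \cO(e)\to 0$: surjectivity onto $\cO(e)$ uses that $\delta_f$ is surjective, and the kernel of the projection is $\ker\delta_f=T_Y|_C$. Identifying this kernel with the fibre product of $\delta_f$ and $g$, its class in $\Ext^1(\cO(e),T_Y|_C)$ equals $\partial(g)$ (up to sign), where $\partial:\Hom(\cO(e),\cO(de))\to\Ext^1(\cO(e),T_Y|_C)$ is the connecting map obtained by applying $\Hom(\cO(e),-)$ to the top row. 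In the resulting long exact sequence $\partial$ is surjective as soon as $\Ext^1(\cO(e),P)=0$, and indeed $\Ext^1(\cO(e),\cO(e+1))=H^1(\cO(1))=0$ and $\Ext^1(\cO(e),\cO(e))=H^1(\cO)=0$, whence $\Ext^1(\cO(e),P)=0$. Therefore every class in $\Ext^1(\cO(e),T_Y|_C)$, in particular the class of the given extension $E$, has the form $\partial(g)$; for such a $g$ the extensions $\ker(\delta_f; g)$ and $E$ share a class and are thus isomorphic. Taking $X=V(f+G_nx_n)$ for the corresponding $G_n$ gives $T_X|_C=\ker\delta_F\cong E$, proving the main assertion.

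For the ``in particular'' clause I would exhibit a \emph{balanced} extension and then invoke openness. Since $g\mapsto\ker(\delta_f; g)$ hits every extension class, the bundles $T_X|_C$ so obtained are exactly all extensions of $\cO(e)$ by the balanced bundle $T_Y|_C$; by semicontinuity of splitting type in families (Lemma \ref{lemma_balanced_is_open_condition}) the generic such extension is the most balanced one. It remains to check that this generic extension is balanced. The relevant numerical input is $\mu(T_Y|_C)=\tfrac{e(n-d)}{n-2}\le e$, valid precisely because $d\ge 2$, which gives $\lfloor \mu(T_Y|_C)\rfloor\le e=\deg\cO(e)$ — exactly the regime in which a generic extension of $\cO(e)$ by a balanced bundle is again balanced (equivalently, a general surjection onto $\cO(e)$ from the balanced bundle of the correct rank and degree has balanced kernel). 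Granting this, some extension $E$ is balanced, the associated $X$ has $T_X|_C$ balanced, and balancedness then spreads to the general $X$ of the family by the openness in Lemma \ref{lemma_balanced_is_open_condition}.

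I expect the last verification to be the main obstacle, since the realization in the first two paragraphs is essentially formal once $\Ext^1(\cO(e),P)=0$ is in hand. The cleanest route to it is to dualize: writing $E$ for the balanced bundle of rank $n-1$ and degree $e(n+1-d)$, one checks that a general sub-line-bundle $\cO(-e)\hookrightarrow E^*$ is a subbundle with balanced quotient, which reduces to a general-position statement for the tuples of sections of $\cO(e-c)$ and $\cO(e-c-1)$ on $\P^1$ defining the map, where $c=\lfloor \mu(E)\rfloor$.
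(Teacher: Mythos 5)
Your proof of the main assertion is correct and is essentially the paper's own argument: the paper likewise reduces to surjectivity of the connecting map $\Hom(\cO(e),\cO(de))\to\Ext^1(\cO(e),T_Y|_C)$ obtained by applying $\Hom(\cO(e),-)$ to the tangent sequence of $Y$, which follows from $\Ext^1\bigl(\cO(e),\cO(e+1)^e\oplus\cO(e)^{n-e-1}\bigr)=0$, together with the realization $F=f+G_nx_n$ via projective normality (Lemma \ref{projectivelynormal}); your identification of $\ker(\delta_f;g)$ with the pullback extension of class $\partial(g)$ just makes explicit what the paper leaves implicit. Where you diverge is the ``in particular'' clause: the paper's proof stops at the $\Ext$-surjectivity and offers no argument there, deferring the existence of a \emph{balanced} extension to its applications, where in each numerical regime the balanced bundle $E$ is exhibited concretely as an extension of $\cO(e)$ by $T_Y|_C$ (e.g.\ the injection $T_Y|_C\to E$ in the proof of Theorem \ref{theorem_e_less_than_or_equal_to_d+1}, and the explicit matrices $J_0,J_1,J_2$ of Lemmas \ref{lemma_J0}--\ref{lemma_J2}). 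Your attempt at a uniform statement is sound in the direction that matters: since $d\ge 2$ gives $\mu(T_Y|_C)\le e$, summands of $T_Y|_C$ of degree $e-1$ or $e$ split off harmlessly (their $\Ext^1$ against $\cO(e)$ vanishes and they only add balanced pieces), and for the remaining summands your dualization works because a generic inclusion $\cO(-e)\hookrightarrow E^*$ — sections without common zeros — has balanced cokernel, a standard genericity fact; combined with Lemma \ref{lemma_balanced_is_open_condition} this closes the clause. Two small corrections: ``exactly the regime'' overstates necessity (for instance $T_Y|_C\cong\cO(e+1)^k$ has $\mu>e$ yet the split extension $\cO(e+1)^k\oplus\cO(e)$ is balanced), and you should be aware that what you flag as the ``main obstacle'' is genuinely left unproved in your write-up (``Granting this\dots''), so as it stands your proof of the second sentence of the proposition is a correct reduction plus a true but only sketched lemma — which, to be fair, is more than the paper itself supplies at this point.
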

\begin{proof}
    As above, $\delta_f$ is fixed for $Y$, and we look for a map $\delta = (\delta_f; g)$ whose kernel is $E$. Every $(\delta_f; g)$ comes from an $F = f + G_nx_n$ for some polynomial $G_n$. Therefore, it suffices to show that, for any extension $E$, there is a $g$ inducing $E$. In other words, it suffices to show the map
    \[ \Hom(\cO(e), \cO(de))\to \Ext^1(\cO(e), T_Y|_C), \ g\mapsto T_X|_C \]
    is surjective. Indeed, applying the functor $\Hom(\cO(e),-)$ to the short exact sequence
    \[ 0\longrightarrow T_Y|_C\longrightarrow \cO(e+1)^{e}\oplus \cO(e)^{n-e-1}\overset{\delta_f}{\longrightarrow} \cO(de)\longrightarrow 0, \]
    we obtain
    \[ \Hom(\cO(e),\cO(de))\longrightarrow \Ext^1(\cO(e),T_Y|_C)\longrightarrow \Ext^1(\cO(e), \cO(e+1)^{e}\oplus \cO(e)^{n-e-1}) = 0. \]
\end{proof}

Therefore, we only need to obtain $X$ for the case $n=e$, and then the case $n>e$ follows by induction. The following lemmas will help us find explicit matrices $J$ and $N_1$ for degrees $d=2,3,4$.

\begin{lemma}\label{injectivity of N_1N_2}
    Let $A,B,C,D$ be vector bundles over $\P^1$ with $\rk A \le \rk B$ and $\rk C \ge \rk D$, and maps $K: A\to B$, $J: A\to C$, $N_2: C\to D$, and $N_1: C\to B$ so that the diagram commutes:
    \[\begin{tikzcd}
	A & B \\
	C & {B\oplus D}
	\arrow["K", from=1-1, to=1-2]
	\arrow["J"', from=1-1, to=2-1]
	\arrow["\left(\begin{array}{c} \mathrm{Id}\\ \hline 0\end{array}\right )" {font=\tiny}, from=1-2, to=2-2]
	\arrow["\left(\begin{array}{c} N_1\\ \hline N_2\end{array}\right )" {font=\tiny}, from=2-1, to=2-2]
    \end{tikzcd}\]
    If, at all points in $\P^1$, $K$ and $N_2$ have maximum rank $\rk A$ and $\rk D$, respectively, then ${\scriptscriptstyle \left(\begin{array}{c} N_1\\ \hline N_2\end{array}\right )}$ has maximum rank $(\rk A + \rk D)$.
\end{lemma}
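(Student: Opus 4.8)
The statement is fiberwise in nature, so the plan is to fix an arbitrary point $q\in\P^1$, pass to the fibers $A_q,B_q,C_q,D_q$, and argue with pure linear algebra; since having maximum rank is a pointwise (indeed open) condition, checking it at every $q$ suffices. Writing $K_q,J_q,(N_1)_q,(N_2)_q$ for the induced maps of fibers, the hypotheses say that $K_q$ is injective and $(N_2)_q$ is surjective, while commutativity of the square yields the two identities $(N_1)_q\circ J_q=K_q$ and $(N_2)_q\circ J_q=0$.

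First I would record that $J_q$ is injective: as $K_q=(N_1)_q\circ J_q$ is injective, its right factor $J_q$ must be injective, so $J_q(A_q)\subseteq C_q$ has dimension $\rk A$. Two further facts follow. On the one hand $J_q(A_q)\subseteq\ker(N_2)_q$, by the identity $(N_2)_q\circ J_q=0$. On the other hand $(N_1)_q$ is injective on $J_q(A_q)$, because its composition with $J_q$ is the injective map $K_q$; hence $(N_1)_q\bigl(J_q(A_q)\bigr)$ again has dimension $\rk A$.

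The key step is to decompose the rank of $\binom{N_1}{N_2}\big|_q\colon C_q\to B_q\oplus D_q$ by projecting onto the $D_q$-factor. That projection carries the image of $\binom{N_1}{N_2}\big|_q$ onto $(N_2)_q(C_q)=D_q$, which is everything by surjectivity, and the kernel of the projection restricted to the image is precisely $(N_1)_q\bigl(\ker(N_2)_q\bigr)\times\{0\}$. Rank--nullity then gives $\rk\binom{N_1}{N_2}\big|_q=\rk D+\dim(N_1)_q\bigl(\ker(N_2)_q\bigr)$. Combining this with the inclusion $J_q(A_q)\subseteq\ker(N_2)_q$ from the second paragraph, so that $\dim(N_1)_q\bigl(\ker(N_2)_q\bigr)\ge\dim(N_1)_q\bigl(J_q(A_q)\bigr)=\rk A$, I conclude that the rank is at least $\rk A+\rk D$ at every point $q$, which is the claimed maximum rank.

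I do not expect a genuine obstacle; this is elementary linear algebra performed fiber by fiber. The only point requiring care is the bookkeeping in the rank decomposition — verifying that the kernel of the projection onto $D_q$, intersected with the image, is exactly $(N_1)_q(\ker(N_2)_q)$ and nothing larger — together with making sure both consequences of commutativity are used: the identity $(N_1)_qJ_q=K_q$ to control $(N_1)_q$ on $J_q(A_q)$, and the identity $(N_2)_qJ_q=0$ to place $J_q(A_q)$ inside $\ker(N_2)_q$. In the inductive construction one has $\rk C=\rk A+\rk D$, so this lower bound forces $\binom{N_1}{N_2}$ to be injective at every point, which is exactly what is needed to identify it with the kernel inclusion and run the induction.
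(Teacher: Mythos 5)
Your proof is correct, and it is in fact more careful than the paper's own. The paper disposes of the lemma in two lines: from $K=N_1\cdot J$ it deduces $\rk N_1\ge\rk K=\rk A$, and then asserts that ``since $N_1$ and $N_2$ map to different summands'' one has $\rk{\scriptstyle\binom{N_1}{N_2}}=\rk N_1+\rk N_2\ge \rk A+\rk D$. Taken literally, that middle equality is not a valid identity: ranks of stacked maps do not add (take $N_1=N_2=\mathrm{Id}$), and it can fail even under the lemma's hypotheses --- e.g.\ $J=\binom{0}{1}\colon k\to k^2$, $N_1=\mathrm{Id}_{k^2}$, $N_2=(1,0)=\coker J$, where the stacked map has rank $2$ while $\rk N_1+\rk N_2=3$ --- although the inequality $\ge \rk A+\rk D$, which is all that is needed, does hold there. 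Your fiberwise rank--nullity decomposition through the projection to $D_q$ supplies exactly the bookkeeping this shortcut elides. In particular, you use both consequences of commutativity, $N_1J=K$ and $N_2J=0$, whereas the paper's argument never invokes the second; and the second is genuinely indispensable, since from $\rk N_1\ge\rk A$ and surjectivity of $N_2$ alone the bound is false (with $C=k^2$, $N_1=\left(\begin{smallmatrix}1&0\\0&0\end{smallmatrix}\right)$, $N_2=(1,0)$, the stacked rank is $1<\rk A+\rk D$; such configurations are excluded only because $J(A)$ is an $\rk A$-dimensional subspace of $\ker N_2$ on which $N_1$ is injective, which is precisely your second paragraph). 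Equivalently, your computation can be repackaged as $\rk{\scriptstyle\binom{N_1}{N_2}}_q=\rk C-\dim\bigl(\ker(N_1)_q\cap\ker(N_2)_q\bigr)$ together with $\bigl(\ker(N_1)_q\cap\ker(N_2)_q\bigr)\cap J_q(A_q)=0$ inside $\ker(N_2)_q$. Your closing observation is also accurate: in the paper's induction $N_2=\coker J$, so $\rk C=\rk A+\rk D$ and your lower bound forces pointwise injectivity, which is how the lemma is used. In short: same conclusion, but your projection argument is the complete version of what the paper compresses into, and slightly overstates as, additivity of ranks.
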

\begin{proof}
    Since $K = N_1\cdot J$, we have $\rk N_1\ge \rk K = \rk A$. And since $N_1$ and $N_2$ map to different summands, we have $\rk {\scriptscriptstyle \left(\begin{array}{c} N_1\\ \hline N_2\end{array}\right )} = \rk N_1 + \rk N_2\ge \rk A + \rk D$.
\end{proof}

The consequence of Lemma \ref{injectivity of N_1N_2} is that we only need to look for matrices $J$ and $N_1$ so that $K_{f} = N_1\cdot J$, and the injectivity of $N$ follows automatically. Next, we present the matrices $J$ that will be used in the induction, and show how to find the corresponding $N_1$. They will come in three kinds: $J_0$, $J_1$, and $J_2$, described in the following lemmas.

\begin{lemma}\label{lemma_J0}
    Let $A, B$ and $D$ be vector bundles in $\P^1$. Let $K: A\to B$ be any map. Then for
    \[ J_0 = {\scriptscriptstyle \left(\begin{array}{c} \mathrm{Id}\\ \hline 0\end{array}\right )}: A\to A\oplus D, \ \text{ and } \ N_1 = \left ( K \ \lvert \ 0\right ): A\oplus D\to B, \]
    we have $K = N_1\cdot J_0$.    
\end{lemma}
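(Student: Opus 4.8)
The plan is to verify the factorization $K = N_1 \cdot J_0$ by a direct computation of the composite as a morphism of vector bundles, simply reading off the blocks. Since both $J_0$ and $N_1$ are given by explicit block matrices with respect to the direct sum decomposition $A \oplus D$, the identity follows from block multiplication alone; no cohomological or geometric input is required.

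Concretely, I would first recall that $J_0 \colon A \to A \oplus D$ is the inclusion of $A$ as the first summand, i.e.\ $J_0(a) = (a, 0)$, and that $N_1 = (K \mid 0)\colon A \oplus D \to B$ acts as $K$ on the first summand and as $0$ on the second. Composing, a section $a$ of $A$ is sent by $J_0$ to $(a,0)$, whose image under $N_1$ is $K(a) + 0 = K(a)$. In block form,
\[ N_1 \cdot J_0 = (K \mid 0)\left(\begin{array}{c} \Id \\ \hline 0 \end{array}\right) = K\cdot \Id + 0\cdot 0 = K, \]
as desired.

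The ``hard part'' is that there is none: the relevant blocks ($\Id$ and $0$) are constant, so the computation is valid verbatim as an identity of morphisms of sheaves on $\P^1$, hence fiberwise at every point. The only point worth flagging is bookkeeping---checking that the summand $A$ hit by $J_0$ is exactly the summand on which $N_1$ restricts to $K$---which is immediate from the two block descriptions. This is precisely the factorization required to invoke Lemma \ref{injectivity of N_1N_2}, so that the resulting map $N = {\scriptscriptstyle\left(\begin{array}{c} N_1\\ \hline N_2\end{array}\right )}$ has maximal rank once $N_2$ does.
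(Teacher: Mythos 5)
Your proof is correct and matches the paper's approach: the paper simply notes the lemma ``follows directly from the definitions,'' and your block-multiplication computation $N_1\cdot J_0 = K\cdot \Id + 0\cdot 0 = K$ is precisely that verification spelled out.
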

\begin{proof}
    It follows directly from the definitions.
\end{proof}

Notice that the cokernel of $J$, that we will use as $N_2$, is
\[ \coker J_0 = \left( 0 \ \lvert \ \mathrm{Id}\right ): A\oplus D\to D. \]

\begin{lemma}\label{lemma_J}
    Let $a, r, s$ be integers with $r\ge 1$, $s\ge 0$, and let $B\cong \bigoplus_i \cO(a_i)$ be a vector bundle over $\P^1$ with all $a_i > a$. Let $J_1: \cO(a)^r\oplus \cO(a+1)^s\to \cO(a)^{r-1}\oplus \cO(a+1)^{s+2}$ be the map defined by the matrix
    \[ J_1 = \begin{bmatrix}
        s & & & & \\
        0 & 1 &  &  &  \\
        \vdots &  &  & \ddots &  \\
        0 &  &  &  & 1 \\
        t & &  & & 0 
        \end{bmatrix}. \]

    Then for every map $K: \cO(a)^r\oplus \cO(a+1)^s\to B$ we can compute a map $N:\cO(a)^{r-1}\oplus \cO(a+1)^{s+2}\to B$ such that $K = N\cdot J_1$.
\end{lemma}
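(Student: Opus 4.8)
The equation $K=N\cdot J_1$ is, column by column, a system that is almost completely diagonalized by the shape of $J_1$. Denote the columns of $K$ by $K_1,\dots,K_{r+s}$ (each a section of $B$ of the appropriate twist) and the columns of the sought matrix $N$ by $n_1,\dots,n_{r+s+1}$, where $n_i\colon \cO(c_i)\to B$ with $c_i$ the degree of the $i$-th summand of $\cO(a)^{r-1}\oplus\cO(a+1)^{s+2}$. Reading off the displayed matrix, the columns $v_1,\dots,v_{r+s}$ of $J_1$ are $v_j=e_j$ (the $j$-th standard basis vector of the rank-$(r+s+1)$ target) for $2\le j\le r+s$, while $v_1=s\,e_1+t\,e_{r+s+1}$. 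Hence the relation $K=N\cdot J_1$ unwinds into $K_j=N v_j$ for each $j$.

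The first step is to observe that all columns of $N$ except two are forced and present no difficulty: for $2\le j\le r+s$ we have $N v_j=n_j$, so we simply set $n_j:=K_j$. Since $v_j=e_j$ selects the summand whose degree matches that of the $j$-th source summand of $J_1$ (the $\cO(a)$ summands landing in rows $2,\dots,r$ and the $\cO(a+1)$ summands in rows $r+1,\dots,r+s$), this assignment is degree-consistent with no conditions imposed. The only genuine equation is the one coming from the first column,
\[
    s\,n_1+t\,n_{r+s+1}=K_1,
\]
where $n_1$ and $n_{r+s+1}$ are the two columns of $N$ corresponding to the $\cO(a+1)$ summands in rows $1$ and $r+s+1$.

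The main (and only) point is to solve this last equation, and this is exactly where the hypothesis $a_i>a$ enters. Writing $B=\bigoplus_i\cO(a_i)$, the $i$-th component $(K_1)_i$ is a map $\cO(a)\to\cO(a_i)$, i.e. a homogeneous form in $s,t$ of degree $a_i-a\ge 1$, while $(n_1)_i$ and $(n_{r+s+1})_i$ are maps $\cO(a+1)\to\cO(a_i)$, i.e. forms of degree $a_i-a-1\ge 0$. I would solve $s\,(n_1)_i+t\,(n_{r+s+1})_i=(K_1)_i$ componentwise: any homogeneous form of positive degree in two variables lies in the ideal $(s,t)$, so such a decomposition exists (for example, write each monomial $s^k t^{a_i-a-k}$ as $s\cdot s^{k-1}t^{a_i-a-k}$ when $k\ge 1$ and as $t\cdot t^{a_i-a-1}$ when $k=0$). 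The strict inequality $a_i>a$ guarantees $a_i-a\ge 1$, so every entry of $K_1$ is divisible into $(s,t)$ and the equation is always solvable; if instead some $a_i$ equalled $a$, the corresponding entry of $K_1$ would be a nonzero constant, which is not in $(s,t)$, and no $N$ would exist. Assembling $n_1,\dots,n_{r+s+1}$ yields the desired $N$ with $K=N\cdot J_1$.

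<br>

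I do not expect any serious obstacle beyond this ideal-membership observation; the remainder is bookkeeping of degrees to confirm that the constructed $N$ is a genuine bundle map, which the inequalities above already ensure.
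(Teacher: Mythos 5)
Your proposal is correct and follows essentially the same route as the paper's proof: both reduce $K = N\cdot J_1$ to the observation that columns $2,\dots,r+s$ of $N$ are directly forced to equal the corresponding columns of $K$, leaving only the equation $s\,n_1 + t\,n_{r+s+1} = K_1$, which is solvable because every entry of $K_1$ is a form of degree $a_i - a \ge 1$ and hence lies in the ideal $(s,t)$. Your write-up merely makes explicit the monomial-by-monomial splitting that the paper compresses into ``we can easily choose $b_{ij}$,'' so there is nothing to add.
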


We remark that the cokernel of $J_1$, which will serve as our $N_2$, is
\[ \coker J_1 = \left (t, 0, \cdots , 0, -s\right ). \]

\begin{proof}
    Let $m = \rk B$. Write the matrix $N = \left (b_{i,j}\right )_{m\times (r+s+1)}$. Then
    \[ N\cdot J_1 = \begin{bmatrix}
        sb_{1,1}+tb_{1,r+s+1} & b_{1,2} & b_{1.3} & \cdots & b_{1,r+s} \\
        sb_{2,1}+tb_{2,r+s+1} & b_{2,2} & b_{2,3} & \cdots & b_{2,r+s} \\
        \vdots & \vdots & \vdots & \ddots & \vdots \\
        sb_{m,1}+tb_{m,r+s+1} & b_{m,2} & b_{m,3} & \cdots & b_{m,r+s} 
        \end{bmatrix}. \]

    Since $B$ has summands of degree larger than $a$ and $r\ge 1$, $K$ contains a column of degree at least one, which can be chosen as the first column of $N\cdot J_1$ above. Then, we can easily choose $b_{ij}$ so that $N\cdot J_1 = K$.
\end{proof}

\begin{lemma}\label{lemma_J2}
    Let $a, r, s$ be integers with $r\ge 2$, $s\ge 0$, and let $B\cong \bigoplus_i \cO(a_i)$ be a vector bundle over $\P^1$ with all $a_i > a$. Let $J_2: \cO(a)^r\oplus \cO(a+1)^s\to \cO(a)^{r-2}\oplus \cO(a+1)^{s+3}$ be the map defined by the matrix
    \[ J_2 = \begin{bmatrix}
        s & 0 &  &  &  \\
        0 & t &  &  &  \\
        0 & 0 & 1 &  &  \\
        \vdots & \vdots &  & \ddots &  \\
        0 & 0 &  &  & 1 \\
        t & s &  &  & 0 
        \end{bmatrix}. \]

    Then, for every map $K: \cO(a)^r\oplus \cO(a+1)^s\to B$ we find an $N_1: \cO(a)^{r-2}\oplus \cO(a+1)^{s+3}\to B$ such that $K = N_1\cdot J_2$.
\end{lemma}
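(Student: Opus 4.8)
The plan is to mimic the proof of Lemma \ref{lemma_J} for $J_1$: solve the equation $K = N_1\cdot J_2$ column by column, the only new feature being that the first two columns of $J_2$ are coupled through a single shared column of $N_1$. First I would record the bigrading forced on the target $\cO(a)^{r-2}\oplus\cO(a+1)^{s+3}$ by $J_2$. Since the entries $s,t$ raise degree by one while the entries $1$ preserve it, the rows $1,2$ and $r+1,\dots,r+s+1$ must be the degree-$(a+1)$ summands, and rows $3,\dots,r$ the degree-$a$ summands (this is exactly $r-2$ copies of $\cO(a)$ and $s+3$ copies of $\cO(a+1)$). Write $N_1=(\nu_1\mid\cdots\mid\nu_{r+s+1})$ for the columns of the map to be found, so that $\nu_k\in H^0(B(-a-1))$ when row $k$ is an $\cO(a+1)$ summand and $\nu_k\in H^0(B(-a))$ otherwise, and write $K_1,\dots,K_{r+s}$ for the columns of $K$.

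Reading off the product $N_1\cdot J_2$ one column at a time, the system $K=N_1\cdot J_2$ becomes
\[ K_1 = s\,\nu_1 + t\,\nu_{r+s+1}, \qquad K_2 = t\,\nu_2 + s\,\nu_{r+s+1}, \qquad K_j = \nu_j \quad (3\le j\le r+s). \]
The last $r+s-2$ equations are solved for free by setting $\nu_j:=K_j$: the degrees match, since columns $3,\dots,r$ of $K$ lie in $H^0(B(-a))$ and columns $r+1,\dots,r+s$ in $H^0(B(-a-1))$, exactly as required of $\nu_j$. Thus everything reduces to producing $\nu_1,\nu_2,\nu_{r+s+1}$ solving the first two equations, which are coupled only through the common column $\nu_{r+s+1}$.

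Next I would solve this coupled system one summand of $B$ at a time. On a summand $\cO(a_i)$ with $b:=a_i-a>0$, the two equations read $k_1=s\,p_1+t\,q$ and $k_2=t\,p_2+s\,q$, where $k_1,k_2\in H^0(\cO(b))$ are the corresponding components of $K_1,K_2$ and $p_1,p_2,q\in H^0(\cO(b-1))$ are the sought components of $\nu_1,\nu_2,\nu_{r+s+1}$. Here one chooses $q$ so that its coefficient of $t^{b-1}$ equals the coefficient of $t^b$ in $k_1$ and its coefficient of $s^{b-1}$ equals the coefficient of $s^b$ in $k_2$ (setting the remaining coefficients of $q$ to zero). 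Then $k_1-t\,q$ has vanishing $t^b$-coefficient, hence is divisible by $s$, and $k_2-s\,q$ has vanishing $s^b$-coefficient, hence is divisible by $t$; one sets $p_1:=(k_1-t\,q)/s$ and $p_2:=(k_2-s\,q)/t$. Reassembling these components over all $i$ yields $\nu_1,\nu_2,\nu_{r+s+1}$, and hence a map $N_1$ with $K=N_1\cdot J_2$, as desired.

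The main obstacle is precisely the coupling isolated in the previous step: the single column $\nu_{r+s+1}$ must simultaneously correct the $t^b$-end of $k_1$ and the $s^b$-end of $k_2$. This works because these are two \emph{distinct} coefficients of $q$, which is available as soon as each summand satisfies $a_i\ge a+2$ (so that $b\ge 2$). This is harmless in practice: in every application of the lemma inside the induction one has $a=e-3$ and $B=\cO(e+1)^e\oplus\cO(e)^{n-e-1}$, so each summand of $B$ has degree at least $a+3$. (If $a_i=a+1$, then $\nu_{r+s+1}$ contributes only a scalar on that summand and can match just one of the two extremal coefficients, so one would additionally need the compatibility $[t]k_1=[s]k_2$ on $\cO(a+1)$-summands; I would either record the stronger hypothesis $a_i\ge a+2$ or note that this compatibility holds for the specific $K=K_f$ occurring in the induction.)
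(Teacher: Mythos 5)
Your construction is correct and is essentially the paper's own proof in different notation: the paper writes $k_{i,1}=sp_i+c_{i,1}t^{d_{i,1}}$, $k_{i,2}=tq_i+c_{i,2}s^{d_{i,2}}$ and sets $b_{i,r+s+1}=c_{i,1}t^{d_{i,1}-1}+c_{i,2}s^{d_{i,2}-1}$, which is exactly your $q$ (match the extremal coefficients, zero elsewhere), while its $b_{i,1}=p_i-c_{i,2}s^{d_{i,2}-2}t$ and $b_{i,2}=q_i-c_{i,1}st^{d_{i,1}-2}$ are exactly your $(k_1-tq)/s$ and $(k_2-sq)/t$; the remaining columns are copied from $K$ just as you do.

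One point worth recording: the caveat you isolate at $a_i=a+1$ is a genuine gap in the lemma as stated, not merely in your write-up. The paper assumes only $a_i>a$, yet its formulas contain $st^{d_{i,1}-2}$ and $s^{d_{i,2}-2}t$, which are meaningless when $d_{i,j}=1$, and the statement is actually false in that range: for $B=\cO(a+1)$, $r=2$, $s=0$, the system forces the scalar entry in the last column to equal both the $t$-coefficient of $k_1$ and the $s$-coefficient of $k_2$, so $K=(t,0)$ admits no $N_1$ --- precisely your compatibility condition $[t]k_1=[s]k_2$. Your proposed fix (require $a_i\ge a+2$) is the right one and, as you observe, costs nothing where the lemma is used: its sole application is the quartic induction step $n=e\to e+1$ in Theorem \ref{theorem_quartics}, where $a=e-3$ and $B=\cO(e+1)^e$, so every $a_i-a=4$.
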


Observe that the cokernel of $J_2$ is
\[ \coker J_2 = \left ( t^2, s^2, 0, \cdots , 0, -st \right ). \]

\begin{proof}
    Let $m = \rk B$. Let $N_1 = \left (b_{i,j}\right )_{m\times (r+s+1)}$. Then
    \[ N_1\cdot J_2 = \begin{bmatrix}
            sb_{1,1}+tb_{1,r+s+1} & tb_{1,2}+sb_{1,r+s+1} & b_{1,3} & b_{1,4} & \cdots  & b_{1,r+s} \\
            sb_{2,1}+tb_{2,r+s+1} & tb_{2,2}+sb_{2,r+s+1} & b_{2,3} & b_{2,4} & \cdots & b_{2,r+s} \\
            \vdots & \vdots & \vdots & \vdots & \ddots & \vdots \\
            sb_{m,1}+tb_{m,r+s+1} & tb_{m,2}+sb_{m,r+s+1} & b_{m,3} & b_{m,4} & \cdots & b_{m,r+s} 
            \end{bmatrix}. \]

    Write $K = (k_{i,j})_{m\times (r+s)}$. Since $B$ is a sum of terms of degree larger than $a$ and $r\ge 2$, at least the first two columns of $K$ have degree at least one. Let $d_{i,j} = \deg k_{i,j}$. Decompose the entries of the first two columns of $K$ as
    \[ k_{i,1} = sp_i + c_{i,1}t^{d_{i,1}} \text{ and } k_{i,2} = tq_i + c_{i,2}s^{d_{i,2}}, \]
    with $p_i,q_i\in k[s,t]$, $c_{i,1},c_{i,2}\in k$, for $1\le i\le m$.

    Then, we can choose
    \begin{align*}
      b_{i,1} & = p_i - c_{i,2}s^{d_{i,2}-2}t;\\
      b_{i,2} & = q_i - c_{i,1}st^{d_{i,1}-2};\\
      b_{i,r+s+1} & = c_{i,1}t^{d_{i,1}-1} + c_{i,2}s^{d_{i,2}-1};
    \end{align*}
    for $1\le i\le m$. And for all the other entries, we can pick $b_{i,j} = k_{i,j}$.
\end{proof}

\begin{example}\label{d3e3n>3}
    Let $d=3$, $e=3$, $n\ge 3$. In the example \ref{d3,e3,n3}, we let $n=3$ and showed that the polynomial $f = x_0Q_{1,2} + x_3Q_{2,3}$ induces $\delta_f = \left (s^4t, -s^5+t^5, -st^4\right )$ and a balanced kernel $T_Y|_C\cong \cO(1)\oplus \cO(2)$ given by
    \[ K_{f} = \begin{bmatrix}
            t^3 & s^2 \\
            0 & st \\
            s^3 & t^2 
            \end{bmatrix}. \]
            
    Now, for $n=4$, we want to find $F$ so that we get a kernel $T_X|_C\cong E =\cO(2)^3$. By Lemma \ref{lemma_J}, it suffices to choose
    \[ J = \begin{bmatrix}
            s & 0 \\
            0 & 1 \\
            t & 0 
            \end{bmatrix}, \]
    and there will exist a matrix $N_1$ such that $K_{f} = N_1\cdot J$. Indeed, we can follow the proof of the lemma to compute
    \[ N_1 = \begin{bmatrix}
            0 & s^2 & t^2 \\
            0 & st & 0 \\
            s^2 & t^2 & 0 
            \end{bmatrix}. \]

    Let $N_2 = \coker J = \left(t, 0, -s\right )$ and $N = {\scriptscriptstyle \left(\begin{array}{c} N_1\\ \hline N_2\end{array}\right )}$. We then obtain the commutative diagram
\[\begin{tikzcd}
	{\mathcal{O}(1)\oplus \mathcal{O}(2)} & {\mathcal{O}(4)^3} & {\mathcal{O}(9)} & {(n=e=3)} \\
	{\mathcal{O}(2)^3} & {\mathcal{O}(4)^3\oplus \mathcal{O}(3)} & {\mathcal{O}(9)} & {(n=4)} \\
	{\mathcal{O}(3)} & {\mathcal{O}(3)} && {}
	\arrow["{K_{f}}", from=1-1, to=1-2]
	\arrow["J"', from=1-1, to=2-1]
	\arrow["{\delta_f}", from=1-2, to=1-3]
	\arrow[from=1-2, to=2-2]
	\arrow["N", from=2-1, to=2-2]
	\arrow["{N_2}"', from=2-1, to=3-1]
	\arrow["\delta", from=2-2, to=2-3]
	\arrow[from=2-2, to=3-2]
	\arrow[equals, from=3-1, to=3-2]
\end{tikzcd}\]

    The map $N$ is injective by Lemma \ref{injectivity of N_1N_2}, which can also be directly checked. Similarly to the computation of the kernel, we can use the relations between the rows of $N$ to compute its cokernel
    \[ \delta = \coker N = \left (s^4t, -s^5+t^5, -st^4; s^3t^3 \right ). \]

    Notice that, as expected, we got $\delta = (\delta_f; g)$, with $g = s^3t^3$. Let $G_4 = x_0x_3$, so $G_4|_C = s^3t^3$. Then $F = f + G_4x_4 = (Q_{1,2}+Q_{2,3})+(x_0x_3)x_4$ induces $\delta_F = \delta$. Hence, $N$ is the kernel of $\delta_F$, and $T_X|_C\cong \cO(2)^3$.

    We can repeat the process for $n=5$. In this case, the balanced bundle $E$ is $\cO(2)^3\oplus \cO(3)$, which is just the $T_X|_C$ from the case $n=4$ plus a summand $\cO(3)$, so we can simply choose $J$ as $J_0$ in Lemma \ref{lemma_J0}, that is, 
    \[ J = \begin{bmatrix}
        1 &  &  \\
         & 1 &  \\
         &  & 1 \\
        0 & 0 & 0 
        \end{bmatrix}, \]
    so $N_2 = \left (0,0,0,1\right )$, and $N_1 = \left (K_{F} \ \lvert \ 0\right )$. We then simply obtain $\delta = (\delta_F; 0)$, and the new $F$ is the same as in the case $n=4$. Similarly for every $n\ge 4$, we get $T_X|_C\cong \cO(2)^3\oplus \cO(3)^{n-3}$ which are all induced by $F = (Q_{1,2}+Q_{2,3})+(x_0x_3)x_4$.
\end{example}

By Corollary \ref{split_in_degree_n}, we know how $T_X|_C$ decomposes when $n\le d+1$. Then, Proposition \ref{Ext_Proposition} allows us to use the induction on $n$ to settle the case $e\le d+1$.

\begin{theorem}\label{theorem_e_less_than_or_equal_to_d+1}
    Let $3\le e\le d+1$, $d\ge 3$ and $e\le n$. Let $X\subset \P^n$ be a general degree $d$ hypersurface containing the degree $e$ rational normal curve $C$.
    \begin{enumerate}
        \item If $n < d$, or $n\ge d$ and $e\le \frac{n-1}{n+1-d}$, then $T_X|_C$ is not balanced.
        \item If $n\ge d$ and $e>\frac{n-1}{n+1-d}$, then $T_X|_C$ is balanced.
    \end{enumerate}
\end{theorem}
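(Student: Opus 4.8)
\textbf{Part (1)} is already Proposition \ref{tangent_degree_less_than_2}: in the stated range $\mu(T_X|_C)=\frac{e(n+1-d)}{n-1}\le 1$, and a balanced bundle of slope $\le 1$ cannot contain the subbundle $\cO(2)$ coming from the tangent sequence. So I would simply cite it and put all the work into part (2).

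For \textbf{part (2)} I would fix $d$ and $e$ with $3\le e\le d+1$ and induct on $n$, using Proposition \ref{Ext_Proposition} as the inductive step. The organizing observation is that for a general $X$ the bundle $N_{C/X}$ is balanced of slope $\mu(N_{C/X})=\frac{e(n+1-d)-2}{n-2}$, that this slope is strictly increasing in $n$ (comparing numerator and denominator gives derivative-sign $e(d-3)+2>0$) and tends to $e$, and that the hypothesis $e>\frac{n-1}{n+1-d}$ is equivalent, for integers, to $\mu(N_{C/X})\ge 1$. Hence the admissible $n$ are exactly the integers $n\ge n_0$, where $n_0$ is the least $n\ge\max\{d,e\}$ with $\mu(N_{C/X})\ge 1$; concretely $n_0=e$ when $e\ge d$, while $n_0>d$ when $e<d$.

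\emph{Base case.} At $n=n_0$ I would verify $\mu(N_{C/X})\le 3$, so that $1\le\mu(N_{C/X})\le 3$. When $e\ge d$ (so $e\in\{d,d+1\}$) this is a one-line check: $\mu(N_{C/X})(e)=1$ if $e=d$, and $\mu(N_{C/X})(e)=\frac{2(e-1)}{e-2}\le 3$ if $e=d+1\ge 4$ (the tight case; $e=d+1=3$ is excluded by $d\ge 3$). When $e<d$, minimality of $n_0$ gives $\mu(N_{C/X})(n_0-1)<1$, which bounds $n_0<\frac{ed-1}{e-1}$ and, substituted into $\mu(N_{C/X})(n_0)\le 3$, makes that inequality hold comfortably. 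Once $1\le\mu(N_{C/X})\le 3$, Corollary \ref{slope_inequality_splits} gives $T_X|_C\cong N_{C/X}\oplus\cO(2)$, and a balanced $N_{C/X}$ of slope in $[1,3]$ has all summands of degree in $\{1,2,3\}$; adjoining $\cO(2)$ keeps all degrees within one of each other, so $T_X|_C$ is balanced.

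\emph{Inductive step and conclusion.} For $n>n_0$ we have $n-1\ge n_0\ge e$, so $C\subset\P^{n-1}$ and, by induction, a general degree $d$ hypersurface $Y\subset\P^{n-1}$ has $T_Y|_C$ balanced; Proposition \ref{Ext_Proposition} then produces a general $X\subset\P^n$ with $T_X|_C$ balanced, and monotonicity of the slope shows $n$ is again admissible, so iterating covers every admissible $n$. I would stress that this step is insensitive to whether $\mu(N_{C/X})\le 3$, so it automatically handles the larger $n$ at which the tangent sequence no longer splits. The main obstacle is therefore entirely in the base case: the whole scheme rests on the numerical fact that the \emph{smallest} admissible dimension $n_0$ already lies in the split regime $\mu(N_{C/X})\le 3$, and this is exactly what $e\le d+1$ buys. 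For $e>d+1$ the base case can have $\mu(N_{C/X})>3$, the sequence need not split there, and a separate degeneration/gluing argument becomes necessary — which is presumably why the theorem is stated only up to $e\le d+1$.
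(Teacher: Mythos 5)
Your proposal is correct, and for $e\in\{d,d+1\}$ it coincides with the paper's argument: base case $n_0=e$, where $1\le \mu(N_{C/X})\le 3$ gives the splitting $T_X|_C\cong N_{C/X}\oplus\cO(2)$ via Corollary \ref{slope_inequality_splits}, followed by induction on $n$ through Proposition \ref{Ext_Proposition}. Where you genuinely diverge is the sub-case $3\le e\le d-1$. The paper bases its induction at $n=e$: there $Y\subset\P^e$ is not Fano, $T_Y|_C\cong N_{C/Y}\oplus\cO(2)$ is \emph{unbalanced} (with $\mu(N_{C/Y})\le 0$), and the paper compares $T_Y|_C$ with the balanced bundle $E$ of the target rank and degree, checks that $\mu(E)>1$ exactly when $e>\frac{n-1}{n+1-d}$ so that every summand of $E$ has degree strictly larger than every summand of $N_{C/Y}$, and uses the resulting injection $T_Y|_C\to E$ together with the full strength of Proposition \ref{Ext_Proposition} (realizing arbitrary extensions) to climb from the unbalanced bundle to $E$. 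You instead never leave the balanced world: you identify the minimal admissible dimension $n_0$ (using that $\mu(N_{C/X})$ is strictly increasing in $n$, and that for integers admissibility is equivalent to $\mu(N_{C/X})\ge 1$), and verify arithmetically that $\mu(N_{C/X})\le 3$ already holds at $n_0$ — your chain $\mu(n_0-1)<1\Rightarrow n_0<\frac{ed-1}{e-1}$, which reduces the bound $\mu(n_0)\le 3$ to $e^2+e+2\le 2ed$, true since $d\ge e+1$, checks out — so the base case is balanced outright by Corollary \ref{slope_inequality_splits}, and each inductive step is a literal citation of the final (balanced-to-balanced) clause of Proposition \ref{Ext_Proposition}. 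What each buys: your route sidesteps the paper's most compressed step, namely converting the single injection $T_Y|_C\to E$ into a chain of rank-one extensions each with quotient $\cO(e)$ (hidden in the phrase ``by Proposition \ref{Ext_Proposition} and induction on $n$, it suffices to show that there is an injection''), at the modest price of the slope arithmetic at $n_0$; the paper's route, by anchoring at $n=e$, stays uniform with its constructive machinery (the matrices $J$ and $N_1$) and can in principle output explicit hypersurfaces at every stage, whereas your base case rests on the generic balancedness of $N_{C/X}$ from \cite{CR} and is purely existential. Your closing remark on why $e\le d+1$ is the natural boundary also matches the paper, which treats $d\le e\le 2d-2$ by a separate degeneration-flavored argument in Theorem \ref{theorem_e_between_d_and_2d-2}.
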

\begin{proof}
    \begin{enumerate}
        \item This follows from Proposition \ref{tangent_degree_less_than_2}.

        \item Suppose first that $e=d$ or $e=d+1$. In both cases, we have $1\le \mu(N_{C/X})\le 3$, then by Corollary \ref{split_if_normal_less_than_4}, we have $T_X|_C\cong N_{C/X}\oplus \cO(2)$ with $T_X|_C$ balanced. Therefore, by Proposition \ref{Ext_Proposition} and induction on $n$, we can find $X$ with $T_X|_C$ balanced for all $n\ge e$.

        We can now assume $e\le d-1$. By Corollary \ref{split_in_degree_n}, there exists a degree $d$ hypersurface $Y\subset \P^e$ with $T_Y|_C\cong N_{C/Y}\oplus \cO(2)$, where $N_{C/Y}$ is the balanced bundle of degree $e(e+1-d)-2$ and rank $e-2$. That is, $T_Y|_C$ is written as a direct sum of line bundles of degrees $2$, $\lfloor \mu(N_{C/Y})\rfloor$ and $\lceil \mu(N_{C/Y})\rceil$, where
        \[ \mu(N_{C/Y}) = \frac{e(e+1-d)-2}{e-2}. \]

        Let $E$ be the balanced vector bundle of degree $n(e+1-d)$ and rank $n-1$, that is, if $T_X|_C$ is balanced, then we should have $T_X|_C\cong E$. By Proposition \ref{Ext_Proposition} and induction on $n$, it suffices to show that there is an injection $T_Y|_C\to E$. Notice that we have
        \[ \mu(N_{C/Y}) \le 0 \ \text{ since } \ e\le d-1, \]
        while
        \[ \mu(E) = \frac{e(n-d+1)}{n-1} > 1 \ \text{ for } \ e > \frac{n-1}{n+1-d}.  \]
        Therefore, $E$ has at least one summand of degree $\ge 2$, and all summands of degree larger than the summands of $N_{C/Y}$. Thus, we do have an injection $T_Y|_C\to E$, and it follows that there exists $X$ with $T_X|_C\cong E$.
    \end{enumerate}
\end{proof}

We remark that, since we know examples of hypersurfaces $Y$ with balanced normal bundle $N_{C/Y}$ from \cite{Mioranci_Normal_Bundle}, we can follow the proof of Theorem \ref{theorem_e_less_than_or_equal_to_d+1} and our induction method to construct explicit examples of hypersurfaces $X$ with balanced restricted tangent bundle as long as we can find the appropriate matrices $J$ and $N_1$ at each step.

We treat the cases $e=1$ and $e=2$ separately. We remark that, in both cases, the restricted tangent bundle splits as $N_{C/X}\oplus \cO(2)$.

\begin{theorem}\label{theorem_cases_e=1_and_e=2}
    Let $n\ge 3$ and $d\ge 3$. Let $X\subset \P^n$ be a general degree $d$ hypersurface containing the rational curve $C$.
    \begin{enumerate}
        \item If $C$ is a line, the restricted tangent bundle $T_X|_C$ is not balanced.
        \item If $C$ is a smooth conic, the restricted tangent bundle $T_X|_C$ is balanced if and only if $n\ge 2d-2$.
    \end{enumerate}
\end{theorem}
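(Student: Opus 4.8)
The plan is to reduce everything to slope computations, invoking Proposition~\ref{tangent_degree_less_than_2} for the non-balanced statements and the splitting $T_X|_C\cong N_{C/X}\oplus\cO(2)$ of Proposition~\ref{split_if_normal_less_than_4} for the balanced one. The point is that for a line and a conic the normal bundle $N_{C/X}$ has small slope, so the tangent sequence $0\to\cO(2)\to T_X|_C\to N_{C/X}\to 0$ always splits, and balancedness of $T_X|_C$ reduces to asking whether the extra $\cO(2)$ summand is compatible with the (balanced) summands of $N_{C/X}$.

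For part~(1) I would just compute, from $\mu(T_X|_C)=\tfrac{e(n+1-d)}{n-1}$ with $e=1$, that $\mu(T_X|_C)=\tfrac{n+1-d}{n-1}\le 1$ exactly when $d\ge 2$. Since $d\ge 3$, Proposition~\ref{tangent_degree_less_than_2} immediately gives that $T_X|_C$ is not balanced. Equivalently, Corollary~\ref{normalbundlee} with $e=1$ gives $N_{C/\P^n}\cong\cO(1)^{n-1}$, so $N_{C/X}$ is balanced of slope $\tfrac{n-1-d}{n-2}<1$; the split then produces a summand of degree $\le 0$ next to $\cO(2)$, which cannot be balanced.

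For part~(2) the ``only if'' direction is again Proposition~\ref{tangent_degree_less_than_2}: with $e=2$ one has $\mu(T_X|_C)=\tfrac{2(n+1-d)}{n-1}$, and $\tfrac{2(n+1-d)}{n-1}\le 1\iff n\le 2d-3\iff n<2d-2$, so $T_X|_C$ is not balanced whenever $n<2d-2$. For the ``if'' direction I would use Corollary~\ref{normalbundlee} with $e=2$ to get $N_{C/\P^n}\cong\cO(4)\oplus\cO(2)^{n-2}$, whence the normal bundle sequence shows $N_{C/X}$ has rank $n-2$ and degree $2(n-d)$, hence slope $\mu_N=\tfrac{2(n-d)}{n-2}$, and is balanced for general Fano $X$ by \cite{CR}. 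Since $d\ge 3$ forces $\mu_N<2$, every summand of $N_{C/X}$ has degree $\le 2<4$, so Proposition~\ref{split_if_normal_less_than_4} yields $T_X|_C\cong N_{C/X}\oplus\cO(2)$. Finally $n\ge 2d-2\iff \mu_N\ge 1$, so $1\le\mu_N<2$ and the balanced $N_{C/X}$ has the form $\cO(1)^{a}\oplus\cO(2)^{b}$ for some $a,b\ge 0$; adjoining $\cO(2)$ keeps every summand in $\{1,2\}$, so $T_X|_C$ is balanced.

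I do not expect a genuine obstacle: the whole argument is choosing the correct slope thresholds and checking they agree at $n=2d-2$. The one place that needs attention is the applicability of Proposition~\ref{split_if_normal_less_than_4}, where I must confirm that every summand of $N_{C/X}$ has degree strictly below $4$; this is exactly what the bound $\mu_N<2$ secures. I would also verify that throughout the range $n\ge 2d-2$, $d\ge 3$ the hypersurface stays Fano with $e=2\le n$, so that the balancedness of $N_{C/X}$ from \cite{CR} legitimately applies.
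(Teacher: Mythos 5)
Your proposal is correct and follows essentially the same route as the paper, which proves both parts via the splitting $T_X|_C\cong N_{C/X}\oplus \cO(2)$ of Corollary~\ref{slope_inequality_splits} together with the slope thresholds $\mu(N_{C/X})<1$ for $e=1$ and $\mu(N_{C/X})\ge 1\iff n\ge 2d-2$ for $e=2$. The only cosmetic difference is that you route the non-balanced directions through Proposition~\ref{tangent_degree_less_than_2} (using $\mu(T_X|_C)\le 1\iff n\le 2d-3$), which matches the paper's threshold exactly and even gives the marginally stronger conclusion that those cases are unbalanced for every hypersurface smooth along $C$, not just a general one.
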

\begin{proof}
    \begin{enumerate}
        \item For $e=1$, Corollary \ref{slope_inequality_splits} holds with $\mu(N_{C/X})<1$.

        \item For $e=2$, Corollary \ref{slope_inequality_splits} holds with $\mu(N_{C/X})\le 3$, thus $T_X|_C\cong N_{C/X}\oplus \cO(2)$ with $N_{C/X}$ balanced. Since we have $\mu(N_{C/X})\ge 1$ if and only if $n\ge 2d-2$, it follows that $T_X|_C$ is balanced if and only if $n\ge 2d-2$.
    \end{enumerate}
\end{proof}

For $n\ge 4$, Ran \cite[Theorem 40]{Interpolation_of_curves_on_Fano_hypersurfaces} shows that a general degree $n$ Fano hypersurface in $\P^n$ contains a degree $e$ rational curve $C$ with balanced normal bundle for every $e\ge n-1$. We use this curve to produce hypersurfaces $X$ with balanced $T_X|_C$ for $d\le e\le 2d-2$.

\begin{theorem}\label{theorem_e_between_d_and_2d-2}
    Let $n\ge d\ge 3$, $n\ge 4$ and let $X\subset \P^n$ be a general degree $d$ hypersurface. Then $X$ contains a degree $e$ rational curve with balanced restricted tangent bundle for every $d \le e\le 2d-2$.
\end{theorem}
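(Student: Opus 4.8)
For $d=3$ the range $d\le e\le 2d-2$ is just $e\in\{3,4\}=\{d,d+1\}$, so the statement is already contained in Theorem \ref{theorem_e_less_than_or_equal_to_d+1}; I therefore assume $d\ge 4$ and argue uniformly for $d\le e\le 2d-2$. The plan is to produce one good pair and spread it by semicontinuity. By Lemma \ref{lemma_balanced_is_open_condition}, balancedness of $T_X|_C$ is an open condition on the incidence variety $\cI=\{(C,X):C\subset X\}$ of degree $e$ rational curves in degree $d$ hypersurfaces, and the projection $\cI\to\{\text{hypersurfaces}\}$ is dominant since a general Fano hypersurface contains degree $e$ rational curves. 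As the curve $C_0$ I will produce is free (its restricted tangent bundle is balanced of slope $>1$, so every summand is positive), the pair $(C_0,X_0)$ lies in a component of $\cI$ dominating the hypersurface space; combined with openness, exhibiting one such pair in each $\P^n$ with $n\ge d$ suffices. I would build the example in the smallest ambient space $\P^d$ and then climb to $\P^n$ through Proposition \ref{Ext_Proposition}.

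For the base case $n=d$, apply Ran's theorem in $\P^d$ (legitimate as $d\ge 4$): a general degree $d$ Fano hypersurface $Y_0\subset\P^d$ contains a degree $e$ rational curve $C$ with balanced normal bundle for every $e\ge d-1$, covering our range. The decisive feature is that the slope is small: with $\deg N_{C/Y_0}=e-2$ and $\rk N_{C/Y_0}=d-2$,
\[ \mu(N_{C/Y_0})=\frac{e-2}{d-2}\in[1,2]\qquad\text{for } d\le e\le 2d-2. \]
Hence the balanced bundle $N_{C/Y_0}$ has all summands in $\{\cO(1),\cO(2)\}$, in particular of degree $<4$, so Proposition \ref{split_if_normal_less_than_4} gives $T_{Y_0}|_C\cong N_{C/Y_0}\oplus\cO(2)$; the extra $\cO(2)$ leaves every summand in $\{\cO(1),\cO(2)\}$, so $T_{Y_0}|_C$ is balanced.

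To pass from $\P^{n-1}$ to $\P^n$ I would apply Proposition \ref{Ext_Proposition}: given $C\subset Y\subset\P^{n-1}$ with $T_Y|_C$ balanced, a general extension $0\to T_Y|_C\to E\to\cO(e)\to 0$ is again balanced and is realized as $T_X|_C$ for a degree $d$ hypersurface $X\subset\P^n$. Iterating from $n=d$ upward yields the example in every $\P^n$ with $n\ge d$, and the semicontinuity reduction completes the proof.

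The main obstacle is that Proposition \ref{Ext_Proposition} is stated for rational normal curves, while for $e>d$ the Ran curve $C$ is nondegenerate but not normal in $\P^d$ (a rational normal curve in $\P^d$ has degree exactly $d$). So I must confirm that the proof of Proposition \ref{Ext_Proposition} survives for such $C$. The essential input is the vanishing $\Ext^1(\cO(e),T_{\P^{n-1}}|_C)=H^1(T_{\P^{n-1}}|_C(-e))=0$, which forces the connecting map $\Hom(\cO(e),\cO(de))\to\Ext^1(\cO(e),T_Y|_C)$ to be surjective; this holds for any rational curve spanning a linear $\P^d\subseteq\P^{n-1}$, since twisting the restricted Euler sequence gives $0\to\cO(-e)\to\cO^{\,n}\to T_{\P^{n-1}}|_C(-e)\to 0$ and the cohomology sequence on $\P^1$ yields the vanishing from $H^1(\cO^{\,n})=H^2(\cO(-e))=0$. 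One must also know each extension class comes from an honest hypersurface: writing $X=V(f+G_nx_n)$ with $Y=V(f)$, this needs every $g\in\Hom(\cO(e),\cO(de))=H^0(\cO_{\P^1}((d-1)e))$ to arise as $G_n|_C$, i.e.\ the surjectivity of $H^0(\cO_{\P^n}(d-1))\to H^0(\cO_{\P^1}((d-1)e))$. For rational normal curves this is projective normality (Lemma \ref{projectivelynormal}); for the general Ran curve of degree $e>d$ I would either verify this normality directly or, more cheaply, check that the realizable classes already span $\Ext^1(\cO(e),T_Y|_C)$, which is all the connecting map requires. This realizability is the step I expect to demand the most care.
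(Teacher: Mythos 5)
Your proposal follows essentially the same route as the paper's proof: the paper also starts the induction at $n=d$, invokes Ran's theorem \cite[Theorem 40]{Interpolation_of_curves_on_Fano_hypersurfaces} to produce a degree $e$ curve with balanced normal bundle, computes that the slope of $N_{C/X}$ lies in $(1,3]$ so that Corollary \ref{slope_inequality_splits} splits off the $\cO(2)$ and gives balancedness, and then climbs in $n$ by ``repeating the proof of Proposition \ref{Ext_Proposition}'' — exactly your extension argument. Two points of difference, both to your credit: (i) your Euler-sequence proof of $\Ext^1(\cO(e),T_{\P^{n}}|_C)=0$ is cleaner and more general than the paper's, which instead invokes Ramella's theorem \cite{Ramella_la_stratification} that $T_{\P^d}|_C$ is balanced for a general rational curve in order to see that $T_{\P^n}|_C$ has summands only among $\cO(e),\cO(e+1),\cO(e+2)$; (ii) you correctly quarantine $d=3$, where the induction base $n=d=3$ falls outside the range of Ran's theorem (which needs $n\ge 4$) — the paper deals with this via the hypothesis $n\ge 4$ and by routing $n=3$ through Theorem \ref{theorem_cubics}, while your reduction to Theorem \ref{theorem_e_less_than_or_equal_to_d+1} for $e\in\{3,4\}$ works just as well.

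The step you flag as open — realizability — is genuine, and it is precisely the step the paper compresses into the unproved sentence ``since $C\subset \P^d$ \dots\ we get all maps $g\in\Hom(\cO(e),\cO(de))$.'' Since every term of $F$ involving $x_{n+1}$ restricts on $C$ through the variables $x_0,\dots,x_d$ only, what is needed is surjectivity of the restriction $H^0(\cO_{\P^d}(d-1))\to H^0(\cO_{\P^1}(e(d-1)))$ for a nondegenerate rational curve of degree $e>d$, which is not projectively normal, so Lemma \ref{projectivelynormal} indeed does not apply. The standard fix is the maximal-rank property of general rational curves (Ballico--Ellia): in degree $d-1$ the source has dimension $\binom{2d-1}{d}$, which exceeds $e(d-1)+1\le 2(d-1)^2+1$ throughout your range, so maximal rank forces surjectivity, and then the surjective connecting map hits every class in $\Ext^1(\cO(e),T_Y|_C)$, making your cheaper alternative unnecessary. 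One must also check that a single curve $C$ can be taken to satisfy all the required conditions simultaneously (maximal rank, and lying with balanced normal bundle in some hypersurface); since these are open conditions on the irreducible space of degree $e$ rational maps to $\P^d$, this is harmless. With that citation supplied your argument is complete; as written, it stops one referenced lemma short of a proof, at the same spot where the paper itself is terse.
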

\begin{proof}
    We will work by induction on $n$ starting at $n=d$. So first, let $n=d<e\le 2d-2$. By \cite[Theorem 40]{Interpolation_of_curves_on_Fano_hypersurfaces}, there exists a degree $e$ rational curve $C\subset X$ with balanced normal bundle $N_{C/X}$. Notice that, for our degree range, $1 < \mu(N_{C/X})\le 3$. Then, by Corollary \ref{slope_inequality_splits}, $T_X|_C\cong N_{C/X}\oplus \cO(2)$ and $T_X|_C$ is balanced.
    
    Now, we repeat the proof of Proposition \ref{Ext_Proposition} to apply induction on $n$. Notice that, for $d < e\le 2d-2$, we have $e+1 < \mu(T_{\P^d}|_C)\le e+2$, and since $T_{\P^d}|_C$ is balanced for a general rational curve in $\P^d$ \cite[Theorem 2]{Ramella_la_stratification}, $T_{\P^d}|_C$ is a direct sum of terms $\cO(e+1)$ and $\cO(e+2)$. By induction hypothesis, suppose that for some $n\ge d$ and the curve $C$ above, there exists a degree $d$ hypersurface $Y\subset \P^{n}$ with balanced $T_Y|_C$. For the step $n+1$, we have the diagram:
    \[\begin{tikzcd}
	& 0 & 0 \\
	0 & {T_Y|_C} & {T_{\mathbb{P}^n}|_C} & {\mathcal{O}(de)} & 0 \\
	0 & E & {T_{\mathbb{P}^n}|_C\oplus \mathcal{O}(e)} & {\mathcal{O}(de)} & 0 \\
	& {\mathcal{O}(e)} & {\mathcal{O}(e)} \\
	& 0 & 0
	\arrow[from=1-2, to=2-2]
	\arrow[from=1-3, to=2-3]
	\arrow[from=2-1, to=2-2]
	\arrow[from=2-2, to=2-3]
	\arrow[from=2-2, to=3-2]
	\arrow["\delta", from=2-3, to=2-4]
	\arrow[from=2-3, to=3-3]
	\arrow[from=2-4, to=2-5]
	\arrow[equals, from=2-4, to=3-4]
	\arrow[from=3-1, to=3-2]
	\arrow[from=3-2, to=3-3]
	\arrow[from=3-2, to=4-2]
	\arrow["{(\delta;g)}", from=3-3, to=3-4]
	\arrow[from=3-3, to=4-3]
	\arrow[from=3-4, to=3-5]
	\arrow[equals, from=4-2, to=4-3]
	\arrow[from=4-2, to=5-2]
	\arrow[from=4-3, to=5-3]
    \end{tikzcd}\]

Notice that, since $C\subset \P^d$, we have $T_{\P^{n+1}}|_C\cong T_{\P^n}|_C\oplus \cO(e)$ and we get all maps $g\in \Hom(\cO(e),\cO(de))$ above. And again, applying the functor $\Hom(\cO(e),-)$ to the first row, we have
\[ \Hom(\cO(e),\cO(de))\longrightarrow \Ext^1(\cO(e),T_Y|_C)\longrightarrow \Ext^1(\cO(e), T_{\P^n}|_C) = 0, \]
where $\Ext^1(\cO(e), T_{\P^n}|_C) = 0$ since $T_{\P^n}|_C$ is a sum of terms $\cO(e),\cO(e+1)$ and $\cO(e+2)$. Thus, the map $\Hom(\cO(e),\cO(de))\to \Ext^1(\cO(e),T_Y|_C)$ is surjective, hence we get all extensions $E$ of $\cO(e)$ by $T_Y|_C$. In particular, there exists a degree $d$ hypersurface $X\subset \P^{n+1}$ with $T_X|_C\cong E$ balanced. Therefore, for all $n\ge d$, there exists a degree $d$ hypersurface $X\subset \P^n$ with $T_X|_C$ balanced.
\end{proof}

\begin{corollary}\label{Corollary_Section_3}
    Let $X$ be a general degree $d\ge 3$ Fano hypersurface in $\P^n$. Then $X$ contains rational curves of degree $e$ with balanced restricted tangent bundle for all $\frac{n-1}{n+1-d} < e\le 2d-2$.
\end{corollary}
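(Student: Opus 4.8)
The plan is to deduce the corollary by assembling the three main results of this section according to the size of $e$, using throughout that $X$ being Fano forces $d \le n$, hence $n \ge d$. Write $e_0 = \frac{n-1}{n+1-d}$ for the lower threshold. First I would dispose of the boundary value $e = 2$: it lies in the range $e_0 < e \le 2d-2$ exactly when $e_0 < 2$, and since $n+1-d > 0$ this inequality rearranges to $n-1 < 2(n+1-d)$, i.e.\ $n \ge 2d-2$. But $n \ge 2d-2$ is precisely the condition under which Theorem \ref{theorem_cases_e=1_and_e=2}(2) guarantees that a general $X$ carries a smooth conic with balanced $T_X|_C$, so this case is immediate. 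The value $e = 1$ never enters the range at all, since $e_0 < 1$ would force $n-1 < n+1-d$, i.e.\ $d < 2$, contrary to $d \ge 3$.

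For the remaining integers $3 \le e \le 2d-2$ I would cover the interval by the two overlapping pieces $[3,\,d+1]$ and $[d,\,2d-2]$. The key numerical fact is that, because $d \ge 3$, we have $d+1 \le 2d-2$, so these intervals overlap on $[d,\,d+1]$ and their union is exactly $[3,\,2d-2]$ with no gap. On the first piece, provided $e \le n$, the curve $C$ can be taken to be the degree $e$ rational normal curve, and Theorem \ref{theorem_e_less_than_or_equal_to_d+1}(2) applies directly: its hypotheses $n \ge d$ (Fano) and $e > e_0$ (our range) are in force. On the second piece, Theorem \ref{theorem_e_between_d_and_2d-2} supplies a degree $e$ rational curve with balanced $T_X|_C$ directly, without requiring $C$ to be a rational normal curve, which is what lets us handle degrees $e > n$.

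The main obstacle I expect is not new geometry but the bookkeeping of how the constraint $e \le n$ in Theorem \ref{theorem_e_less_than_or_equal_to_d+1} interacts with the constraint $n \ge 4$ in Theorem \ref{theorem_e_between_d_and_2d-2}: I must check that every integer in $(e_0,\,2d-2]$ is caught by at least one theorem whose hypotheses hold simultaneously. The delicate configuration is when $e \le d+1$ but $e > n$; since $n \ge d$, this can only occur with $e = d+1$ and $n = d$, so the rational-normal-curve route is unavailable, but then $e = d+1$ still lies in $[d,\,2d-2]$ (again using $d \ge 3$) and Theorem \ref{theorem_e_between_d_and_2d-2} supplies the curve instead. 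I would then route each borderline case through the overlap $d \le e \le d+1$ to whichever theorem has admissible hypotheses, paying particular attention to the low-dimensional corner $n = 3$ (forcing $d = 3$), where only $e \in \{3,4\}$ occur and the two degrees must be matched by hand to the appropriate theorem. Once every integer $e$ in the stated range has been assigned a theorem with satisfied hypotheses, the conclusion that $T_X|_C$ is balanced follows in each case and the corollary is proved.
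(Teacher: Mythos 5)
Your overall strategy coincides with the paper's: the paper proves this corollary in exactly the way you propose, splitting the range and covering it by Theorem \ref{theorem_cases_e=1_and_e=2} (for $e\le 2$), Theorem \ref{theorem_e_less_than_or_equal_to_d+1} (for $3\le e\le d$, where $e\le d\le n$ guarantees the rational normal curve exists), and Theorem \ref{theorem_e_between_d_and_2d-2} (for $d\le e\le 2d-2$), using the overlap $d+1\le 2d-2$ valid for $d\ge 3$. Your bookkeeping for $e=1$ (vacuous, since $\frac{n-1}{n+1-d}>1$ when $d\ge 3$) and for $e=2$ (in range iff $n\ge 2d-2$, which is exactly the balancedness condition of Theorem \ref{theorem_cases_e=1_and_e=2}(2)) is correct and indeed more explicit than the paper's.

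The genuine gap is the corner you flagged but did not close: the triple $(n,d,e)=(3,3,4)$, which lies in the corollary's range since $\frac{n-1}{n+1-d}=2<4=2d-2$. Your resolution of the delicate configuration $e=d+1$, $n=d$ --- routing it through Theorem \ref{theorem_e_between_d_and_2d-2} --- works only for $d\ge 4$, because that theorem carries the hypothesis $n\ge 4$; at $d=3$ the configuration forces $n=3$, and the theorem is inapplicable for precisely the reason you note earlier in the same paragraph. For this triple, Theorem \ref{theorem_e_less_than_or_equal_to_d+1} is also unavailable ($e=4>n=3$ violates $e\le n$), so among the three results you assemble there is no ``appropriate theorem'' to which $e=4$ can be matched, and ``matched by hand'' is left without content. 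The paper is aware of exactly this hole and plugs it with a forward reference: its proof ends with ``Theorem \ref{theorem_e_between_d_and_2d-2} does not include the case $n=3$, which we prove in Theorem \ref{theorem_cubics}'' --- that is, the $n=3$ cases are delegated to the explicit computations for cubic hypersurfaces in the later section, a resource outside the toolkit you allowed yourself. So your proposal is correct for every $(n,d,e)$ in range except $(3,3,4)$; to complete it you must either invoke Theorem \ref{theorem_cubics} as the paper does, or supply a separate construction of a degree $4$ rational curve with balanced restricted tangent bundle on a general cubic surface.
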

\begin{proof}
    For Fano hypersurfaces, $n\ge d$. The corollary follows for $e=1$ and $2$ by Theorem \ref{theorem_cases_e=1_and_e=2}, for $3\le e\le d$ by Theorem \ref{theorem_e_less_than_or_equal_to_d+1} and for $d\le e\le 2d-2$ by Theorem \ref{theorem_e_between_d_and_2d-2}. Theorem \ref{theorem_e_between_d_and_2d-2} does not include the case $n=3$, which we prove in Theorem \ref{theorem_cubics}.
\end{proof}

\section{Quadrics}

In this section, we consider the case $d=2$, that is, when $X$ is a quadric hypersurface. As we will see in the next sections, the restricted tangent bundle becomes balanced for a curve of sufficiently large degree when $d\ge 3$. Quadrics are a special case, where odd-degree curves will never have a balanced restricted tangent bundle. In other words, we can interpolate fewer than expected points by deforming odd-degree curves on quadric hypersurfaces.

\begin{example}
    A smooth quadric surface $X\subset \P^3$ is isomorphic to $\P^1\times \P^1$, and a degree $e$ rational curve $C$ in $X$ corresponds to a bi-degree $(e_1,e_2)$ curve in $\P^1\times \P^1$, with $e_1+e_2=e$. Let $\pi_1,\pi_2:\P^1\times \P^1\to \P^1$ be the two natural projections. Then
    \[ T_{\P^1\times \P^1}|_C\cong (\pi_1^*T_{\P^1}\oplus \pi_2^*T_{\P^1})|_C\cong \cO_{\P^1}(2e_1)\oplus \cO_{\P^1}(2e_2). \]

    Hence, $T_{\P^1\times \P^1}|_C$ will be balanced exactly when $e_1=e_2$. In particular, it can be balanced for an even-degree curve but never for an odd-degree rational curve.
\end{example}

The quadric in $\P^5$ corresponds to another classical example of an unbalanced restricted tangent bundle, which is the case of most rational curves in Grassmannians.

\begin{example}
    A smooth quadric hypersurface in $\P^5$ is isomorphic to the Grassmannian $G(2,4)$ (see \cite[Chapter 6.2]{Griffiths-Harris}). The tangent bundle $T_{G(k,n)}$ of a Grassmannian splits as $T_{G(k,n)}\cong S^*\otimes Q$, where $S$ and $Q$ are the tautological and quotient bundle, respectively. As investigated in \cite[Lemma 33]{Mandal_Gr}, for $T_{G(k,n)}|_C$ to be balanced, both $S^*|_C$ and $Q|_C$ need to be balanced. But, for $G(2,4)$ and $C$ of odd degree $e=2m+1$, we will have $S^*|_C\cong Q|_C\cong \cO(m)\oplus \cO(m+1)$, hence $T_{G(k,n)}|_C\cong \cO(2m)\oplus \cO(2m+1)^2\oplus \cO(2m+2)$ is unbalanced. Notice that it can be balanced if $C$ has even degree.

    More generally, a general deformation of a degree $e$ rational curve in $G(k,n)$ will have a balanced restricted tangent bundle if and only if either $k|e$ or $(n-k)|e$ (see \cite[Example 21]{Interpolation_of_curves_on_Fano_hypersurfaces}).
\end{example}

Let $X$ be a degree $2$ hypersurface in $\P^n$ and $C\subset X$ a rational curve of degree $e$. From the tangent bundle sequence, we see that if $T_X|_C$ is balanced, then it is $\cO(e)^{n-1}$. Hence $C$ interpolates $e+1$ points exactly when $T_X|_C$ is balanced. We will show that an odd-degree curve cannot interpolate the expected number of points in a quadric hypersurface. For that, we will describe a method of constructing rational curves of a given degree via rational scrolls from \cite{Kollar_Quadratic_solutions}. Kollár studies degree $e$ maps $\P^1\to Q^n$ where $Q^n$ is a smooth quadric of dimension $n\ge 3$, and shows the following theorem.

\begin{theorem}\cite[Theorem 1]{Kollar_Quadratic_solutions}\label{Kollar_Theorem_1}
    Let $Q^n$ be a smooth quadric of dimension $n\ge 3$. Then
    \[ \mathrm{Mor}_e(\P^1, Q^n)\overset{bir}{\sim}  \left\{\begin{matrix}
    Q^n\times \P^{ne} \ \text{if } e \text{ is even, and} \\
    \OG(\P^1,Q^n)\times \P^{ne-n+3} \ \text{if } $e$ \text{ is odd,}
    \end{matrix}\right. \]
    where $\overset{bir}{\sim}$ denotes birational equivalence and $\OG(\P^1, Q^n)$ the orthogonal Grassmannian of lines in $Q^n$.
\end{theorem}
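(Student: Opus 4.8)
The plan is to prove the statement by exhibiting a \emph{parity-preserving} birational descent in the degree $e$, bottoming out at parametrized conics when $e$ is even and parametrized lines when $e$ is odd. First I would pin down the common dimension. Writing $Q^n\subset\P^{n+1}$ as the zero locus of a nondegenerate quadratic form $Q$ in $n+2$ variables, a degree $e$ morphism is an $(n+2)$-tuple $f=(f_0:\cdots:f_{n+1})$ of forms of degree $e$ with no common zero, subject to the single polynomial identity $Q(f)\equiv 0$ in $k[s,t]_{2e}$, which is $2e+1$ scalar conditions. Hence $\dim\mathrm{Mor}_e(\P^1,Q^n)=(n+2)(e+1)-1-(2e+1)=n(e+1)$, and one checks directly that $\dim\!\big(Q^n\times\P^{ne}\big)=n+ne$ and $\dim\!\big(\OG(\P^1,Q^n)\times\P^{ne-n+3}\big)=(2n-3)+(ne-n+3)$ both equal $n(e+1)$. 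So all of the content lies in identifying the correct rational factor, and the parity of $e$ is precisely what selects between a point of $Q^n$ and a line of $Q^n$.

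It is conceptually cleaner to phrase a morphism $f$ as an isotropic sub-line-bundle $L\cong\cO(-e)\hookrightarrow(\cO_{\P^1}^{\,n+2},Q)$ which is a sub-bundle (this is the basepoint-free condition). Its perpendicular $L^\perp$ has rank $n+1$, and the quotient $W:=L^\perp/L$ is a rank $n$, degree $0$ quadratic bundle on $\P^1$. I would track the isomorphism class of $W$ as the governing invariant: over $k$ it decomposes into hyperbolic pairs $\cO(a)\oplus\cO(-a)$ together with, when $e$ is odd, a distinguished isotropic summand that forces a canonical line on $Q^n$ (equivalently a point of $\OG(\P^1,Q^n)$), whereas for $e$ even no such leftover occurs and the naturally remembered datum is instead a point of $Q^n$. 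This is the structural origin of the dichotomy and explains why the two cases cannot be uniformized.

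The main engine is a descent $\mathrm{Mor}_e\overset{bir}{\sim}\mathrm{Mor}_{e-2}\times\P^{2n}$ for $e\ge 3$, realized geometrically by the ruled-surface (scroll) construction of \cite{Kollar_Quadratic_solutions}: fixing a scroll $S\subset Q^n$ swept out by a rational family of lines, a degree $e$ section and a degree $e-2$ section of $S$ are residual to one another, so sweeping over the scrolls and the section parameters produces the $\P^{2n}$ worth of rational choices. Because this step changes the degree by exactly $2$, it preserves the parity of $e$ and keeps the even and odd towers separate. The base cases are explicit: for $e=1$ a morphism is a line $L\subset Q^n$ together with an isomorphism $\P^1\xrightarrow{\ \sim\ }L$, giving $\mathrm{Mor}_1\overset{bir}{\sim}\OG(\P^1,Q^n)\times\PGL_2\overset{bir}{\sim}\OG(\P^1,Q^n)\times\P^3$, consistent with $ne-n+3=3$; for $e=2$ a general morphism is a parametrized smooth conic, and evaluating at a fixed point of $\P^1$ exhibits the $Q^n$ factor with a rational $\P^{2n}$ of remaining parameters. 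One then assembles the formulas by induction, repeatedly using $\P^a\times\P^b\overset{bir}{\sim}\P^{a+b}$: the even tower terminates at $e=2$ to give $Q^n\times\P^{ne}$, and the odd tower at $e=1$ to give $\OG(\P^1,Q^n)\times\P^{ne-n+3}$.

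The hard part will be establishing the descent as a genuine birational equivalence rather than a merely dominant or dimension-matching correspondence: one must produce the scroll construction as a rational map defined over $k$, prove generic injectivity by reconstructing the degree $e-2$ section and the auxiliary parameters uniquely from a general degree $e$ curve, and verify that the generic solution stays basepoint-free and smooth so that the map lands back in $\mathrm{Mor}_e$. Two further technical points deserve care: controlling the common-factor behaviour so that the degree drops by \emph{exactly} $2$ on a dense open set, and checking that the invariant $W$ is compatible with the descent so that parity is rigorously preserved; the standing hypothesis $p\nmid e$ should be monitored wherever separability of the parametrization is invoked.
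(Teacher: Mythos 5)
This theorem is quoted, not proved, in the paper: it is \cite[Theorem 1]{Kollar_Quadratic_solutions}, and the only ingredient of its proof that the paper reproduces is the degree-two descent $\mathrm{Mor}_e(\P^1,Q^n)\overset{bir}{\sim}\mathrm{Mor}_{e-2}(\P^1,Q^n)\times\P^{2n}$ (Proposition \ref{Kollar_proposition_26}) together with the remark that $\mathrm{Mor}_1(\P^1,Q^n)\overset{bir}{\sim}\OG(\P^1,Q^n)\times\P^3$. Your proposal reconstructs exactly Kollár's architecture: the parity-preserving scroll descent, the base cases $e=1$ and $e=2$, and the induction via $\P^a\times\P^b\overset{bir}{\sim}\P^{a+b}$. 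Your bookkeeping is correct throughout: $\dim\mathrm{Mor}_e(\P^1,Q^n)=n(e+1)$ agrees with both right-hand sides, and identifying $\mathrm{Mor}_1$ with $\OG(\P^1,Q^n)\times\PGL_2$ gives the exponent $ne-n+3=3$ as it should.

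Two caveats keep this from being a complete independent proof. First, the engine of the argument --- that the residual-curve correspondence on ruled surfaces is a genuine birational equivalence with a $\P^{2n}$ of auxiliary parameters, with the degree dropping by exactly $2$ generically --- is itself cited from \cite{Kollar_Quadratic_solutions} rather than established; you correctly name the verifications required (generic injectivity by reconstructing the scroll data from a general curve, basepoint-freeness and irreducibility of the residual), but those verifications \emph{are} the proof of Proposition \ref{Kollar_proposition_26}, so your writeup is an assembly of quoted pieces --- consistent with how the present paper treats the statement, but circular as a proof of Kollár's theorem from scratch. Second, the even base case $e=2$ is asserted rather than proved: the evaluation map $\mathrm{Mor}_2(\P^1,Q^n)\to Q^n$ has the right dimensions ($3n$ over $n$), but you must still show the general fiber (parametrized conics through a fixed point of $Q^n$) is rational of dimension $2n$ \emph{and} that the fibration is birationally trivial --- rationality of base and fiber alone does not give a product decomposition; here one can use the transitive orthogonal group action or an explicit chart, but some argument is needed. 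The digression on the quadratic bundle $W=L^\perp/L$ is pleasant motivation for the parity dichotomy but carries no logical weight once the descent and base cases are in hand.
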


During the proof of Theorem \ref{Kollar_Theorem_1}, Kollár shows the following proposition, which relates curves of the same parity.

\begin{proposition}\cite[Proposition 26]{Kollar_Quadratic_solutions}\label{Kollar_proposition_26}
    Let $Q^n$ be a smooth quadric of dimension $n\ge 3$. Then
    \[ \mathrm{Mor}_e(\P^1, Q^n)\overset{bir}{\sim} \mathrm{Mor}_{e-2}(\P^1, Q^n)\times \P^{2n} \ \text{ for } e\ge 3. \]
\end{proposition}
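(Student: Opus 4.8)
The plan is to reconstruct Kollár's ruled-surface argument, and I would start with a dimension count to see that the equivalence is numerically forced. Since $Q^n\subset \P^{n+1}$ is a smooth quadric with $-K_{Q^n}=\cO_{Q^n}(n)$, a free degree $e$ map $f\colon \P^1\to Q^n$ has
\[ \dim_{[f]}\mathrm{Mor}_e(\P^1,Q^n)=h^0(f^*T_{Q^n})=\deg f^*T_{Q^n}+\rk f^*T_{Q^n}=ne+n=n(e+1). \]
Hence $\dim \mathrm{Mor}_e-\dim \mathrm{Mor}_{e-2}=n(e+1)-n(e-1)=2n=\dim\P^{2n}$, so the asserted extra factor has exactly the right size, and I should look for a construction whose generic fibres are rational of dimension $2n$.

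The geometric mechanism is a scroll of lines. Writing $Q(x)=B(x,x)$ for the symmetric bilinear form $B$, the line $\overline{ab}$ joining $a,b\in Q^n$ lies on $Q^n$ exactly when $B(a,b)=0$. If a degree $e-2$ curve $g$ and a degree $e$ curve $f$ on $Q^n$ satisfy $B(f(t),g(t))\equiv 0$, the lines $\ell_t=\overline{f(t)g(t)}\subset Q^n$ sweep out a ruled surface. Imposing that this surface be embedded with the rulings as lines ($L\cdot(\text{fibre})=1$) and with sections of degrees $L\cdot C_0=e-2$ and $L\cdot C_\infty=e$ forces the directrix to satisfy $C_0^2=-2$; that is, the scroll is $\mathbb{F}_2$, with $g=h(C_0)$ its negative $(-2)$-section and $f=h(C_\infty)$ its positive section. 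The ``$-2$'' of the statement is precisely this self-intersection, and the requirement $e\ge 3$ is exactly what keeps $g$ an honest curve (degree $\ge 1$) rather than a point.

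Accordingly I would realise both sides as moduli of such $\mathbb{F}_2$-scrolls. Given a general degree $e$ curve $f$, the plan is to show that it is the positive section $C_\infty$ of a unique ruled surface $h\colon \mathbb{F}_2\to Q^n$ with rulings on $Q^n$; the residual negative section $g=h(C_0)$ is then a well-defined degree $e-2$ curve, defining the ``residuation'' map $p\colon f\mapsto g$. Conversely, with $g$ fixed, the degree $e$ completions $f$ --- equivalently the ruling data that turn $g$ into the $(-2)$-section of such a scroll --- should form a rational family of dimension $2n$, which I would identify birationally with $\P^{2n}$. Combining the two directions gives $\mathrm{Mor}_e\overset{bir}{\sim}\mathrm{Mor}_{e-2}\times\P^{2n}$.

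The hard part is the rigidity underlying $p$. The pointwise incidence $B(f(t),g(t))\equiv 0$ is only $2e-1$ linear conditions on $f$, so by itself it joins a general $f$ by lines to a family of curves $g$ of dimension far exceeding $2n$; single-valuedness of the residuation cannot come from it alone. It must instead come from the global demand that the rulings $\overline{f(t)g(t)}$ sweep a surface of scroll type exactly $\mathbb{F}_2$ with $g$ as its unique $(-2)$-directrix. Proving that a general degree $e$ curve lies on exactly one such scroll, and that the fibres over a fixed $g$ are genuinely $\P^{2n}$ --- ideally by producing an explicit inverse, an algebraic ``division'' that peels a degree $2$ factor off the forms defining $f$ and reads the $2n$-dimensional quotient as free data --- is the technical core of the proposition and the step I expect to require the most care.
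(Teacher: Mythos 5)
Your dimension count is correct and your instinct that scrolls of lines drive the equivalence is on target, but the mechanism you propose is not the one behind this proposition, and as set up it does not close. The statement is quoted from Kollár, and the relevant construction is recalled in the paper just before Proposition \ref{interpolate_m_iff_interpolate_m-2}: one takes the degree $e$ map $\phi$ on the quadric together with an auxiliary degree-one map $\psi:\P^1\to\P^{n+1}$ \emph{not} contained in $Q^n$, agreeing with $\phi$ on a length-$\deg Z$ subscheme, and forms the scroll $S(\phi,\psi)$ of degree $\deg\phi+\deg\psi-\deg Z$ swept by the secant lines $\langle \phi(t),\psi(t)\rangle$. This scroll does \emph{not} lie in the quadric; residuation happens by intersecting it with $Q^n$, which meets $S$ in $\mathrm{im}\,\phi$ plus a residual curve of degree $\deg\phi+2\deg\psi-2\deg Z$, so $\deg Z=2$ drops the degree by $2$ and $\deg Z=0$ raises it by $2$, with each ruling meeting $Q^n$ in one point of each curve and thereby parametrizing the residual. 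Crucially, the auxiliary line always exists and is free linear data; that freeness is where the rational $\P^{2n}$ of choices comes from. Your construction instead demands a scroll of lines \emph{contained} in $Q^n$ — an $\mathbb{F}_2\subset Q^n$ with $f$ and $g$ as its two sections — which is a far more rigid object and a genuinely different (and harder) statement.

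The gap is exactly the two claims you defer: that a general degree $e$ map lies on a \emph{unique} such $\mathbb{F}_2$-scroll inside $Q^n$ (so your residuation $p$ is well defined), and that the fibre over a fixed $g$ is birationally $\P^{2n}$. These carry the entire content of the proposition, and the uniqueness claim is not merely unproven but implausible: the lines on $Q^n$ through a point $f(t)$ form an $(n-2)$-dimensional quadric, so line-scrolls in $Q^n$ along $f$ correspond to sections of an $(n-2)$-dimensional quadric bundle over $\P^1$ — a large family with sections of many degrees — and nothing forces the subfamily whose directrix has degree $e-2$ (assuming it is nonempty, which you also do not argue) to be a single point. In addition, your inference that section degrees $e-2$ and $e$ force the surface to be $\mathbb{F}_2$ tacitly assumes the two curves are disjoint sections. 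In Kollár's version none of these difficulties arise, because existence and the fibre structure are immediate from the unconstrained choice of the auxiliary line. So your proposal correctly locates where the work lies but leaves that work undone, and the uniqueness on which your program rests would likely fail as stated.
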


Since degree $1$ maps are lines, we have the rational equivalence $\mathrm{Mor}_1(\P^1, Q^n)\overset{bir}{\sim} \OG(\P^1, Q^n)\times \P^3$. We will obtain the higher-degree curves by intersecting ruled surfaces with the quadric.

Let $C$ be a smooth projective curve and $\phi, \psi: C\to \P^{n}$ be two morphisms. We will consider the ruled surface swept out by the lines $\left <\phi(p), \psi(p)\right >$ for $p\in C$. If $\phi$ and $\psi$ coincide at a zero-dimensional subscheme $Z\subset C$, $\phi|_Z = \psi|_Z$, then we can construct a ruled surface $S(\phi,\psi)\subset \P^{n}$ from $\phi,\psi$ with $\deg S = \deg \phi + \deg \psi - \deg Z$ (see \cite[Section 2]{Kollar_Quadratic_solutions} for more details on the definition of $S$).

Let $X\subset \P^{n}$ be a smooth quadric. Suppose that $\phi:C\to X$ above maps to $X$ and $\psi:C\to \P^{n}$ is a morphism not contained in $X$. We get a ruled surface $S(\phi,\psi)$. The quadric and the ruled surface meet on the image of $\phi$ and on the residual intersection $R$. The degree of $R$ is $2\deg S - \deg \phi = \deg \phi + 2\deg \psi - 2\deg Z$.

The following proposition uses the construction in the proof of Proposition \ref{Kollar_proposition_26} to show that we can interpolate $m$ points in $X$ with rational curves of degree $e$ if and only if we can interpolate $m-2$ points with rational curves of degree $e-2$.

\begin{proposition}\label{interpolate_m_iff_interpolate_m-2}
    Let $n\ge 3$ and $m\le e+1$ be integers. Let $X\subset \P^n$ be a smooth quadric, and $p_1,\ldots, p_m$ be $m$ general points in $\P^1$. Then there exists a degree $e$ morphism $\phi_e:\P^1\to X$ with $\phi_e(p_i)=x_i$ for any general set of $m$ points $x_1,\ldots , x_m\in X$ if and only if there exists a degree $e-2$ morphism $\psi_{e-2}:\P^1\to X$ with $\psi(p_i)=y_i$ for any general set of $m-2$ points $y_1,\ldots , y_{m-2}\in X$.
\end{proposition}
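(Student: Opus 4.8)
The plan is to prove both implications with a single geometric device: the ruled-surface residuation set up above, made reversible by the projection involution of the quadric. For a point $q\in\P^n\setminus X$, projection from $q$ defines a birational involution $\iota_q$ of $X$ sending a point $x$ to the residual point of the secant line $\langle q,x\rangle$ in $X$ (so that $\langle q,x\rangle\cap X=\{x,\iota_q(x)\}$); it is defined wherever this line is a genuine secant, and being birational it carries general points to general points. The whole proof rests on the observation that if $\phi\colon\P^1\to X$ is a curve on $X$ and $\psi\colon\P^1\to M$ parametrizes a line $M\not\subset X$, then the residual curve $R$ of $S(\phi,\psi)\cap X$ meets each ruling once, hence is a section of the scroll and so a rational curve, and over a parameter $p$ with $\psi(p)\notin X$ it is given by $R(p)=\iota_{\psi(p)}\bigl(\phi(p)\bigr)$.

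For the implication that degree $e-2$ interpolation forces degree $e$ interpolation, I would fix general points $x_1,\dots,x_m\in X$ and general $p_1,\dots,p_m\in\P^1$, and take $M=\langle x_{m-1},x_m\rangle$, a secant with $M\cap X=\{x_{m-1},x_m\}$. Choosing a degree $1$ parametrization $\psi_M\colon\P^1\to M$ with $\psi_M(p_{m-1})=x_{m-1}$ and $\psi_M(p_m)=x_m$, the points $q_i:=\psi_M(p_i)$ for $i\le m-2$ lie off $X$, so $y_i:=\iota_{q_i}(x_i)$ are again general. By hypothesis there is a degree $e-2$ curve $\phi$ with $\phi(p_i)=y_i$ for $i\le m-2$, which for general data is disjoint from $M$; then $Z=\emptyset$, $\deg S(\phi,\psi_M)=e-1$, and $\deg R=2(e-1)-(e-2)=e$. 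The displayed formula gives $R(p_i)=\iota_{q_i}(\phi(p_i))=\iota_{q_i}(y_i)=x_i$ for $i\le m-2$, while $R(p_{m-1})=x_{m-1}$ and $R(p_m)=x_m$ since the ruling over $p_{m-1}$ (resp.\ $p_m$) already contains the point $x_{m-1}\in M\cap X$ (resp.\ $x_m$). Thus $\phi_e:=R$ is the required degree $e$ curve.

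The reverse implication uses the same picture read backwards, the involutivity $\iota_{q}\circ\iota_{q}=\mathrm{id}$ supplying the reversibility. Given general $y_1,\dots,y_{m-2}$ and parameters $p_1,\dots,p_m$, I would choose general $x_{m-1},x_m\in X$, set $M=\langle x_{m-1},x_m\rangle$ and $\psi_M$ as before, and define $x_i:=\iota_{q_i}(y_i)$ for $i\le m-2$, which are general. By hypothesis there is a degree $e$ curve $\phi_e$ with $\phi_e(p_i)=x_i$ for all $i$. Now $\phi_e(p_{m-1})=x_{m-1}$ and $\phi_e(p_m)=x_m$ exhibit $M$ as a genuine $2$-secant of $\phi_e$, so $Z=\{p_{m-1},p_m\}$ has degree $2$, $\deg S=e-1$, and $\deg R=e+2-4=e-2$. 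Residuation yields $R(p_i)=\iota_{q_i}(x_i)=\iota_{q_i}(\iota_{q_i}(y_i))=y_i$ for $i\le m-2$, so $\psi_{e-2}:=R$ interpolates the $y_i$.

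The degree bookkeeping and the involution identity make both directions fall out of one construction, so the real work is the general-position input I have used freely: that the interpolating curve can be taken disjoint from $M$ in the first direction and a $2$-secant of $M$ in the second, that $M$ is not tangent to $X$, and that $R$ is a reduced smooth rational curve of the stated degree (so that $p\mapsto\iota_{\psi_M(p)}(\phi(p))$ is a morphism $\P^1\to X$ of that degree). I expect the cleanest way to secure these is to observe that the interpolating curves produced by the hypothesis move in a family dominating $X^{m}$ (resp.\ $X^{m-2}$), so a general member meets all of these open conditions, with the bound $m\le e+1$ keeping us in the range where such families have the expected dimension. As elsewhere, the characteristic hypothesis $p\nmid e$ is what keeps $R\to\P^1$ separable of the correct degree.
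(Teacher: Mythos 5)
Your proposal is correct and follows essentially the same route as the paper: both directions use Kollár's ruled surface $S(\phi,\psi)$ with the line through $x_{m-1},x_m$ as one directrix, the residual intersection with the quadric, and the same degree bookkeeping $\deg R = \deg\phi + 2\deg\psi - 2\deg Z$ (with $Z=\emptyset$ in one direction and $\deg Z=2$ in the other). Your packaging of the pointwise residuation via the projection involution $\iota_q$, and leaving the line's parametrization free where the paper pins it with an auxiliary hyperplane $H$ at $\infty$, are only cosmetic differences.
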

\begin{proof}
    Let $H\subset \P^{n}$ be an auxiliary hyperplane, and fix the points $0,1,\infty\in \P^1$ without loss of generality.

    \begin{figure}[h]
        \centering
        \includegraphics[width=0.6\linewidth]{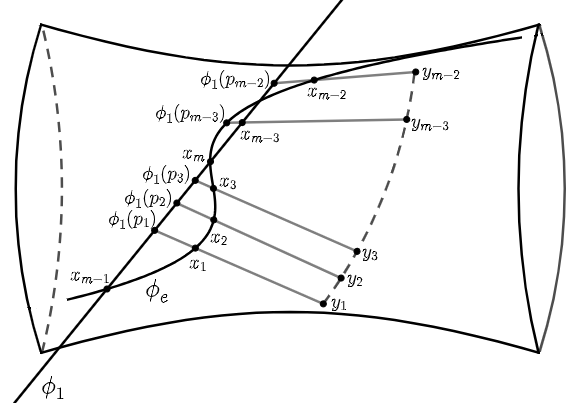}
    \end{figure}
    
    First, suppose that we can interpolate $m$ general points with curves of degree $e$, and let $y_1,\ldots , y_{m-2}$ be a set of $m-2$ general points in $X$. Choose $x_{m-1},x_m$ two general points in $X$. Let $\phi_1:\P^1\to \P^n$ be the line defined by $\phi_1(p_{m-1})=x_{m-1}$, $\phi_1(p_{m}) = x_m$ and $\phi(\infty) = \left <x_{m-1},x_m\right >\cap H$. This also sets the images $\phi_1(p_1), \ldots ,\phi_1(p_{m-2})\in \P^n$. For each $1\le i\le m-2$, the line $\left <\phi_1(p_i), y_i\right >$ meets $X$ at $y_i$ and at another point, which we name as $x_i$. Since $y_1,\ldots ,y_{m-2},x_{m-1},x_m$ were chosen as general points in $X$, then $x_1,\ldots , x_{m-2},x_{m-1},x_m$ are general points in $X$. Then, by hypothesis, there exists a degree $e$ morphism $\phi_e:\P^1\to X$ such that $\phi_1(p_i)=x_i$ for $i=1,\ldots , m$. Then, by \cite[Section 2]{Kollar_Quadratic_solutions}, there is a ruled surface $S(\phi_1,\phi_e)$ such that the residual of its intersection with $X$ is an irreducible curve $\psi_{e-2}:\P^1\to X$ of degree $e-2$ determined by the rulings of $S$. By construction, we have $\psi_{e-2}(p_i) = y_i$ for $1\le i\le m-2$.

    Conversely, suppose we can interpolate $m-2$ general points with rational curves of degree $e-2$. Let $x_1, \ldots , x_m$ be $m$ general points in $X$. Define $\phi_1:\P^1\to \P^n$ the line with $\phi_1(p_{m-1}) = x_{m-1}$, $\phi_1(p_m)=x_m$ and $\phi(\infty) = \left <x_{m-1},x_m\right >\cap H$. Then, for each $1\le i\le m-2$, the line $\left <\phi_1(p_i), x_i\right >$ meets $X$ at a second point $y_i$. By hypothesis, there exists a degree $e-2$ curve $\psi_{e-2}:\P^1\to X$ such that $\psi_{e-2}(p_i)=y_i$ for $i=1,\ldots , m-2$. Thus, $\phi_1$ and $\psi_{e-2}$ define a ruled surface $S(\phi_1,\psi_{e-2})$ whose intersection with $X$ is an irreducible curve $\phi_e:\P^1\to X$ of degree $e$ determined by the rulings of $S$, and such that $\phi_e(p_i) = x_i$ for $i=1,\ldots , m$.
\end{proof}

\begin{theorem}\label{theorem_odd_degree_curves_on_quadrics_are_not_balanced}
    Let $X\subset \P^n$ be a smooth quadric, and let $C\subset X$ be a rational curve of odd degree $e$. Then $T_X|_C$ is not balanced. Equivalently, deformations of $C$ do not interpolate $e+1$ general points of $X$.
\end{theorem}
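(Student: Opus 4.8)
The plan is to convert the statement about balancedness into one about interpolation and then run a parity induction powered by Proposition \ref{interpolate_m_iff_interpolate_m-2}.

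First I would record the relevant dictionary for quadrics. Since $\deg T_X|_C = e(n+1-d) = e(n-1)$ and $\rk T_X|_C = n-1$, the slope of $T_X|_C$ is exactly $e$; hence $T_X|_C$ is balanced if and only if $T_X|_C\cong \cO(e)^{n-1}$, equivalently if and only if the smallest piece $a_1$ of its splitting type equals $e$. By the discussion in Section \ref{Section_modular_interpolation}, a degree $e$ curve interpolates $m$ points exactly when $m\le a_1+1$, so such a curve interpolates the maximal number $e+1$ of points precisely when it is balanced. Moreover, by Theorem \ref{Kollar_Theorem_1} the space $\mathrm{Mor}_e(\P^1,Q^n)$ is birational to an irreducible variety, hence irreducible, so any two degree $e$ rational curves are deformations of one another; and by Lemma \ref{lemma_balanced_is_open_condition} balancedness is an open condition on this family. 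It therefore suffices to prove that the general degree $e$ rational curve on $X$ does not interpolate $e+1$ general points: if it does not, the balanced locus in $\mathrm{Mor}_e(\P^1,Q^n)$ is empty, and then $T_X|_C$ is not balanced for every odd-degree $C$.

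Next I would induct on the odd integer $e$. Applying Proposition \ref{interpolate_m_iff_interpolate_m-2} with $m=e+1$ (the hypothesis $m\le e+1$ holds with equality), interpolating $e+1$ general points with degree $e$ curves is equivalent to interpolating $e-1=(e-2)+1$ general points with degree $e-2$ curves, that is, to interpolating the maximal number of points with odd-degree curves of degree $e-2$. Iterating, the question reduces to the base case $e=1$: can a line contained in $X$ be chosen through two general points of $X$ at prescribed parameters? The unique line of $\P^n$ through two general points $x_1,x_2\in X$ is $\langle x_1,x_2\rangle$, which meets the quadric $X$ in exactly the two points $x_1,x_2$ and so is not contained in $X$. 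Hence no line in $X$ passes through two general points of $X$, and degree $1$ curves do not interpolate $2$ points. (Equivalently, for $e=1$ one has $\mu(T_X|_C)=(n-1)/(n-1)=1\le 1$, so Proposition \ref{tangent_degree_less_than_2} already shows $T_X|_C$ is not balanced.)

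Combining the two steps, the general odd-degree rational curve on $X$ fails to interpolate $e+1$ points, so its restricted tangent bundle is not balanced; by irreducibility of the moduli space and openness of the balanced condition, $T_X|_C$ is not balanced for every odd-degree rational curve $C\subset X$, and equivalently no such curve interpolates $e+1$ general points. The step requiring the most care is not the geometry, which is entirely absorbed into Kollár's residual-curve construction encoded in Proposition \ref{interpolate_m_iff_interpolate_m-2}, but rather the bookkeeping of the equivalence ``balanced $\iff$ interpolates the maximal number of points'' together with the passage from the general curve to an arbitrary curve; this is exactly where irreducibility of $\mathrm{Mor}_e(\P^1,Q^n)$ and openness of balancedness are needed, and no further delicate computation is involved.
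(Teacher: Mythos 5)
Your proposal follows essentially the same route as the paper: reduce balancedness to the interpolation of $e+1$ points via the dictionary of Section~\ref{Section_modular_interpolation}, run the parity descent of Proposition~\ref{interpolate_m_iff_interpolate_m-2} with $m=e+1$, and bottom out at lines, which interpolate only one point. The paper's proof is exactly this induction; its only cosmetic differences are that it settles the base case by quoting the splitting $T_X|_L\cong \cO\oplus\cO(1)^{n-3}\oplus\cO(2)$ rather than your (equally valid) elementary observation that the line through two general points of $X$ is not contained in $X$, and that it handles the passage from ``some curve'' to ``every curve'' by openness of balancedness alone.

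One step you single out as essential is in fact both unnecessary and, in one case, false. Theorem~\ref{Kollar_Theorem_1} concerns smooth quadrics $Q^n$ of \emph{dimension} $n\ge 3$, whereas here $X\subset\P^n$ has dimension $n-1$; for $n=3$ the quadric surface is $\P^1\times\P^1$, the space of degree $e$ maps splits into components indexed by bidegree $(e_1,e_2)$, and two rational curves of the same total degree need not be deformations of one another. So the claim ``any two degree $e$ rational curves are deformations of one another'' fails precisely in the lowest case the theorem covers. Fortunately your argument does not need it: Proposition~\ref{interpolate_m_iff_interpolate_m-2} is an existence statement over \emph{all} degree $e$ morphisms, so your descent shows that no degree $e$ morphism whatsoever sends $e+1$ general parameters to $e+1$ general points; since, by your own dictionary, any curve with balanced $T_X|_C$ would interpolate $e+1$ points, every odd-degree curve is unbalanced, with no appeal to irreducibility of $\mathrm{Mor}_e(\P^1,X)$ or to openness at all. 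Deleting the irreducibility detour (or replacing it, at $n=3$, by the direct bidegree computation $T_{\P^1\times\P^1}|_C\cong\cO(2e_1)\oplus\cO(2e_2)$) leaves a complete proof matching the paper's.
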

\begin{proof}
    Since being balanced is an open condition, if $T_X|_C$ is balanced for some $X$, then it is for a general quadric. Thus, we may assume $X$ is general. Similarly, we can choose $C$ general in its family.

    If $e=1$, $C$ is a line, and we have $T_X|_C\cong \cO\oplus \cO(1)^{n-3}\oplus \cO(2)$, which is not balanced. In particular, lines interpolate up to $1$ point in $X$. Therefore, by Proposition \ref{interpolate_m_iff_interpolate_m-2} and induction on $e$, a curve of odd degree $e$ interpolates up to $e$ points. Hence, its restricted tangent bundle is not balanced.
\end{proof}

\begin{theorem}\label{theorem_quadrics}
    Let $X\subset \P^n$, $n\ge 3$, be a general quadric hypersurface containing a degree $e$, $1\le e\le n$, rational normal curve $C$.
    \begin{enumerate}
        \item If $e$ is even, then $T_X|_C\cong \cO(e)^{n-1}$.
        \item If $e$ is odd, then $T_X|_C\cong \cO(e-1)\oplus \cO(e)^{n-3}\oplus \cO(e+1)$.
    \end{enumerate}
    In addition, for each one of the cases above, we obtain an explicit example of a quadric $X$ with the corresponding $T_X|_C$ and balanced $N_{C/X}$.
\end{theorem}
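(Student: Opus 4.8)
The plan is to produce, for each parity of $e$, an explicit quadric $X$ containing the degree $e$ rational normal curve $C$ whose map $\delta_F$ has a kernel of the asserted splitting type; by openness of balancedness (Lemma \ref{lemma_balanced_is_open_condition}) this transfers to the general quadric. Because the case $d=2$ is excluded from the surjectivity statement of Proposition \ref{surjection_of_phi}, I cannot freely prescribe $\delta$, so I would instead choose $F$ directly as a combination of the quadrics $Q_{i,j}$ and the linear forms $x_k$, then compute $\psi_F$ via the coefficient recipe of Section 2.3 and set $\delta_F = \psi_F\circ\beta$. Note that for a quadric the map $\psi_F=(C_1,\dots,C_{e-1};G_{e+1}|_C,\dots,G_n|_C)$ has target $\cO(2e)$, so each $C_i$ and each $G_k|_C$ is a form of degree $e$; this rigidity is exactly what forces the constraint on the achievable splitting types.

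I would first settle the base case $e=n$, separating the two parities. For $e$ even, the target splitting $\cO(e)^{n-1}$ is perfectly balanced, so after computing $\delta_F=(tC_1,-sC_1+tC_2,\dots,-sC_{e-1};g)$ I need to exhibit, for a well-chosen $F$, exactly $n-1$ independent degree-$e$ column relations whose coefficient matrix $K_F$ has maximal rank at every point of $\P^1$; matching rank and degree then identifies $T_X|_C\cong\cO(e)^{n-1}$ as in Example \ref{d3,e3,n3}. A natural guess is a symmetric choice such as $F=x_0^{\,?}Q_{1,2}+\cdots$ mirroring the $F=x_0Q_{1,2}+x_nQ_{e-1,e}$ pattern, adjusted so the degrees land at $e$. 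For $e$ odd the parity obstruction of Theorem \ref{theorem_odd_degree_curves_on_quadrics_are_not_balanced} already tells me the kernel cannot be $\cO(e)^{n-1}$; the best possible is $\cO(e-1)\oplus\cO(e)^{n-3}\oplus\cO(e+1)$, and I would produce an $F$ realizing it, again by writing down explicit relations. Here Lemma \ref{lemma_balanced_is_open_condition} guarantees that the \emph{general} odd-degree curve is at least this balanced (its kernel specializes from, hence is no more unbalanced than, any example), while Theorem \ref{theorem_odd_degree_curves_on_quadrics_are_not_balanced} forbids it from being strictly better, pinning the splitting type exactly.

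Once $e=n$ is in hand, I would descend to $e<n$ by the inductive machinery of Section 3. Since $T_{\P^n}|_C\cong T_{\P^e}|_C\oplus N_{\P^e/\P^n}$, a quadric $X\subset\P^n$ restricts to a quadric $Y=X\cap\Lambda\subset\P^e$ with the same curve $C$ as its degree-$e$ rational normal curve, and Proposition \ref{Ext_Proposition} lets me build $X$ from $Y$ by appending terms $G_k x_k$ and choosing the extension $0\to T_Y|_C\to E\to\cO(e)\to 0$ so that $E$ has the prescribed splitting type. Concretely, the target $\cO(e)^{n-1}$ (even case) or $\cO(e-1)\oplus\cO(e)^{n-3}\oplus\cO(e+1)$ (odd case) is just the $\P^e$ answer augmented by copies of $\cO(e)$, so I expect to use $J_0$ of Lemma \ref{lemma_J0} at each inductive step, with $N_2=(0,\dots,0,1)$ and $N_1=(K_F\mid 0)$, reading off the new defining polynomial as in the final paragraph of Example \ref{d3e3n>3}. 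The main obstacle I anticipate is the explicit base case: finding an $F$ whose relations yield a matrix $K_F$ provably of full rank at every $(s,t)$, since unlike the $d\ge 3$ situation I cannot invoke Proposition \ref{surjection_of_phi} to realize an arbitrary convenient $\delta$ and must instead verify that a hands-on choice of $F$ produces the sharp splitting type—particularly in the odd case where the off-balanced summands $\cO(e\pm1)$ must appear with the correct multiplicity.
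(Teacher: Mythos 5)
Your plan coincides with the paper's own proof: the paper likewise constructs an explicit quadric by hand (taking the constant-coefficient polynomial $F = Q_{1,2}+Q_{2,3}+\cdots +Q_{e-1,e}$ for both parities, handling all $n\ge e$ at once while explicitly noting that your base-case-$e=n$-plus-$J_0$-induction route is equivalent), computes $\delta_F=\psi_F\circ\beta$, exhibits $n-1$ column relations whose matrix $K_F$ has full rank at every point of $\P^1$, settles $e=1$ via Corollary \ref{slope_inequality_splits}, and your combination of openness (Lemma \ref{lemma_balanced_is_open_condition}) with Theorem \ref{theorem_odd_degree_curves_on_quadrics_are_not_balanced} to pin the odd splitting type for the \emph{general} quadric is exactly the intended logic, stated even more explicitly than in the paper. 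The pieces you defer---the concrete $F$, the rank check, and the balanced-$N_{C/X}$ clause (which the paper obtains by observing the same $F$ works in the proof of \cite[Theorem 4.3]{Mioranci_Normal_Bundle})---are routine within this framework.
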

\begin{proof}
The case $d=2$ is simpler, and we can show all the cases $n\ge e$ at the same time. Equivalently, we could show the case $n=e$ and run the induction with matrices $J$ as in Lemma \ref{lemma_J0} on every step.
    \begin{enumerate}[leftmargin=*]
        \item Suppose $e$ is even.  The tangent bundle sequence is
        \[ 0\longrightarrow T_X|_C\longrightarrow \cO(e+1)^e\oplus \cO(e)^{n-e}\overset{\delta}{\longrightarrow} \cO(2e)\longrightarrow 0. \]

        We choose the quadratic polynomial $F = Q_{1,2} + Q_{2,3} + \cdots + Q_{e-1,e}$, which induces the map $\psi_F: \cO(e+2)^{e-1}\oplus \cO(e)^{n-e}\to \cO(2e)$ given by the matrix
        \[ \psi_F = \left (s^{e-2}, s^{e-3}t, s^{e-4}t^2, \cdots , st^{e-3}, t^{e-2}; 0, 0, \cdots , 0\right ). \]
        
        And this defines the map $\delta_F = \psi_F\circ \beta : \cO(e+1)^e\oplus \cO(e)^{n-e}\to \cO(2e)$,
        \[ \delta_F = \left (ts^{e-2}, -s^{e-1}+s^{e-3}t^2, -s^{e-2}t+s^{e-4}t^3, \cdots , -s^2t^{e-3}+t^{e-1},-st^{e-2}; 0, \cdots ,0\right ). \]

        The columns $C_1, \ldots , C_n$ of $\delta_F$ satisfy the $n-1$ relations:
        \begin{itemize}[after=\vspace{\baselineskip}]
            \item $t\cdot C_i - s\cdot C_{i+1} = 0$ for $2\le i\le e-2$;
            \item $s\cdot C_1 + t\cdot C_2 + t\cdot C_4 + t\cdot C_6 + \cdots + t\cdot C_{e-2} + t\cdot C_e = 0$;
            \item $t\cdot C_1 + t\cdot C_3 + t\cdot C_5 + \cdots + t\cdot C_{e-5} + t\cdot C_{e-3} + s\cdot C_e = 0$;
            \item $1\cdot C_j = 0$ for $e+1\le j\le n$.
        \end{itemize}

        We remark that when $e=2$ the relations $s\cdot C_1 + t\cdot C_2 = 0$ and $1\cdot C_j = 0$, $3\le j\le n$, are satisfied.

        Hence, we define the matrix $K_{F}$ whose columns are the coefficients of the column relations of $\delta_F$:
        \[ K_{F} = \begin{bmatrix}
                0 & 0 & 0 &  & 0 & s & t &  &  &  \\
                t & 0 & 0 &  & 0 & t & 0 &  &  &  \\
                -s & t & 0 & \cdots & 0 & 0 & t &  &  &  \\
                0 & -s & t &  & 0 & t & 0 &  &  &  \\
                0 & 0 & -s &  & 0 & 0 & t &  &  &  \\
                \vdots & \vdots & \vdots & \ddots & \vdots & \vdots & \vdots &  &  &  \\
                0 & 0 & 0 &  & 0 & 0 & t &  &  &  \\
                0 & 0 & 0 &  & t & t & 0 &  &  &  \\
                0 & 0 & 0 & \cdots & -s & 0 & 0 &  &  &  \\
                0 & 0 & 0 &  & 0 & t & s &  &  &  \\
                 &  &  &  &  &  &  & 1 &  &  \\
                 &  &  &  &  &  &  &  & \ddots &  \\
                 &  &  &  &  &  &  &  &  & 1 
                \end{bmatrix}. \]
        
        It defines a map $\cO(e)^{n-1}\overset{K_{F}}{\longrightarrow} \cO(e+1)^e\oplus \cO(e)^{n-e}$ that factors through the kernel $T_X|_C$ of $\delta_F$. Thus, it suffices to show that  $K_{F}$ has maximum rank $n-1$ at every point $(s,t)\in \P^1$. This can be easily checked by dividing into the cases $s=1$ and $t=1$ and applying elementary row and column operations. Therefore, it follows that $K_{F}$ is the kernel of $\delta_F$ and $T_X|_C\cong \cO(e)^{n-1}$.

        \bigskip

        \item By Corollary \ref{slope_inequality_splits}, the claim follows for $e=1$. Assume that $e>1$. When $e$ is odd, we use the same polynomial $F = Q_{1,2}+Q_{2,3}+\cdots +Q_{e-1,e}$, which will induce the same map $\delta_F$. However, $\delta_F$ will not satisfy the column relations from the even case. Instead, we have:
        \begin{itemize}[after=\vspace{\baselineskip}]
            \item $C_1 + C_3 + \cdots + C_{e-3} + C_e = 0$;
            \item $-t\cdot C_i + s\cdot C_{e+1}$ for $2\le i\le e-2$;
            \item $(s^2-t^2)C_1 + (st)\cdot C_2 = 0$;
            \item $1\cdot C_j = 0$ for $e+1\le j\le n$.
        \end{itemize}

        Thus defining the matrix
        \[ K_{F} = \begin{bmatrix}
            1 & 0 & 0 & 0 &  & 0 & 0 & s^2-t^2 &  &  &  \\
            0 & -t & 0 & 0 &  & 0 & 0 & st &  &  &  \\
            1 & s & -t & 0 & \cdots & 0 & 0 & 0 &  &  &  \\
            0 & 0 & s & -t &  & 0 & 0 & 0 &  &  &  \\
            1 & 0 & 0 & s &  & 0 & 0 & 0 &  &  &  \\
            \vdots & \vdots & \vdots & \vdots & \ddots & \vdots & \vdots & \vdots &  &  &  \\
            0 & 0 & 0 & 0 &  & -t & 0 & 0 &  &  &  \\
            1 & 0 & 0 & 0 &  & s & -t & 0 &  &  &  \\
            0 & 0 & 0 & 0 & \cdots & 0 & s & 0 &  &  &  \\
            1 & 0 & 0 & 0 &  & 0 & 0 & 0 &  &  &  \\
             &  &  &  &  &  &  &  & 1 &  &  \\
             &  &  &  &  &  &  &  &  & \ddots &  \\
             &  &  &  &  &  &  &  &  &  & 1 
            \end{bmatrix}. \]
        
        Again, it is not difficult to check that $K_{F}$ has maximum rank at all points $(s,t)\in \P^1$, and thus defines an injection $\cO(e-1)\oplus \cO(e)^{n-3}\oplus \cO(e+1)\overset{K_{F}}{\longrightarrow} \cO(e+1)^e\oplus \cO(e)^{n-e}$. Hence $K_{F}$ gives the kernel of $\delta_F$, that is, $T_X|_C\cong \cO(e-1)\oplus \cO(e)^{n-3}\oplus \cO(e+1).$
    \end{enumerate}
    The proof of \cite[Theorem 4.3]{Mioranci_Normal_Bundle} with the polynomials $F$ above show they also induce a balanced normal bundle.
\end{proof}

\begin{theorem}\label{Corollary_quadrics}
    Let $X$ be a smooth quadric hypersurface in $\P^n$.
    \begin{enumerate}
        \item For every even $e\ge 2$, $X$ contains degree $e$ rational curves with balanced restricted tangent bundle $T_X|_C\cong \cO(e)^{n-1}$.

        \item For every odd $e\ge 1$, $X$ contains degree $e$ rational curves with restricted tangent bundle $T_X|_C\cong \cO(e-1)\oplus \cO(e)^{n-3}\oplus \cO(e+1)$.
    \end{enumerate}
\end{theorem}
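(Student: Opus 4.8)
The statement extends Theorem \ref{theorem_quadrics} in two directions: from a general quadric to an arbitrary smooth one, and from $e\le n$ (the degrees admitting a rational normal curve) to all $e$. I would dispose of the first extension at once: all smooth quadrics in $\P^n$ are projectively equivalent, and the splitting type of $T_X|_C$ is a projective invariant, so the explicit pairs $(X,C)$ produced in Theorem \ref{theorem_quadrics} transport to any given smooth quadric $X$. In particular, on any smooth quadric $X$ one obtains a conic $C_0$ with $T_X|_{C_0}\cong\cO(2)^{n-1}$ and a line $\ell$ with $T_X|_\ell\cong\cO\oplus\cO(1)^{n-3}\oplus\cO(2)$ (the latter computed in the proof of Theorem \ref{theorem_odd_degree_curves_on_quadrics_are_not_balanced}). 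The entire argument for $e>n$ then takes place intrinsically on the fixed $X$, building higher-degree curves by gluing and smoothing these base curves, so no further genericity of $X$ is needed.

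For the even case I would proceed by gluing conics. Writing $e=2m$, I form a chain $C_1\cup\cdots\cup C_m$ of conics on $X$ meeting in nodes, each with $T_X|_{C_j}\cong\cO(2)^{n-1}$. Since chains of free rational curves on a smooth quadric smooth out to irreducible rational curves, and $T_X$ restricts to a flat family in which the nodal restriction is a specialization of the smooth one, Lemma \ref{gluing_balanced_curves} applies: gluing two perfectly balanced bundles of equal rank yields a perfectly balanced bundle, so inductively the degree-$2m$ smoothing $C$ satisfies $T_X|_C\cong\cO(2m)^{n-1}=\cO(e)^{n-1}$. Equivalently, the ``most unbalanced'' possibility allowed by the lemma is already $\cO(e)^{n-1}$, which is perfectly balanced, so the smoothing must equal it.

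For the odd case, the key idea is to glue a single even curve to a single line and then invoke the non-balancedness already established. Given odd $e\ge 3$, take the even curve $C'$ of degree $e-1$ constructed above, with $T_X|_{C'}\cong\cO(e-1)^{n-1}$, and attach a line $\ell$ at one point. Ordering the splitting types on the two components and applying Lemma \ref{gluing_balanced_curves} shows that the smoothing $C$, an irreducible rational curve of degree $e$, has $T_X|_C$ at most as unbalanced as
\[ \bigoplus_i \cO(a_i+b_i)=\cO(e-1)\oplus\cO(e)^{n-3}\oplus\cO(e+1), \]
where $(a_i)=(e-1,\dots,e-1)$ and $(b_i)=(0,1,\dots,1,2)$. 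Among splitting types of rank $n-1$ and slope $e$, the only one strictly more balanced than this is the perfectly balanced $\cO(e)^{n-1}$; but Theorem \ref{theorem_odd_degree_curves_on_quadrics_are_not_balanced} forbids $T_X|_C$ from being balanced for odd $e$. Hence $T_X|_C\cong\cO(e-1)\oplus\cO(e)^{n-3}\oplus\cO(e+1)$, with the base case $e=1$ being the line $\ell$ itself.

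The main obstacle is the smoothing and specialization input underlying each use of Lemma \ref{gluing_balanced_curves}: one must know that the nodal chains deform to irreducible rational curves inside $X$ and that $T_X$ restricts to a flat family in which the nodal restriction specializes the smooth one. This is where I would invoke that smooth quadrics are homogeneous, so every rational curve is free and $H^1(\P^1,f^*T_X)=0$, guaranteeing unobstructed smoothing of chains of free curves. The remaining bookkeeping — ordering the splitting types and ruling out the single balanced competitor in the odd case — is elementary once this input is in place.
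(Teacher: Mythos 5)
Your proposal is correct and follows essentially the same route as the paper: the paper's proof likewise takes the line $L$ with $T_X|_L\cong\cO\oplus\cO(1)^{n-3}\oplus\cO(2)$ and the conic $Q$ with $T_X|_Q\cong\cO(2)^{n-1}$ from Theorem \ref{theorem_quadrics} and glues conics onto $L$ and $Q$ via Lemma \ref{gluing_balanced_curves} to reach all degrees. The only difference is that you spell out two steps the paper leaves implicit --- transporting the explicit pairs $(X,C)$ to an arbitrary smooth quadric by projective equivalence, and invoking Theorem \ref{theorem_odd_degree_curves_on_quadrics_are_not_balanced} to exclude the unique strictly more balanced competitor $\cO(e)^{n-1}$ in the odd case --- both of which are exactly the intended justifications.
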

\begin{proof}
    By Theorem \ref{theorem_quadrics}, $X$ contains lines $L$ with restricted tangent bundle $T_X|_L\cong \cO\oplus \cO(1)^{n-3}\oplus \cO(2)$ and conics $Q$ with perfectly balanced restricted tangent bundle $T_X|_Q\cong \cO(2)^{n-1}$. Then, by Lemma \ref{gluing_balanced_curves}, we can glue conics to $L$ and $Q$ to obtain curves $C$ of any degree $e$ and the desired restricted tangent bundle.
\end{proof}

\section{Cubics}

\begin{theorem}\label{theorem_cubics}
    Let $X\subset \P^n$ be a general cubic hypersurface containing a degree $e$ rational normal curve $C$.
    \begin{enumerate}
        \item If $e=1$, $C$ is a line, and we have the following cases:
         \[T_X|_C\cong  \left\{\begin{matrix*}[l]
                            \cO(-1)\oplus \cO(2), & \text{for } n=3; \\
                             \cO^2\oplus \cO(1)^{n-4}\oplus \cO(2), & \text{for } n\ge 4.
                            \end{matrix*}\right. \]
        \item If $e=2$, $C$ is a conic, and we have the following cases:
        \[T_X|_C\cong  \left\{\begin{matrix*}[l]
                            \cO\oplus \cO(2), & \text{for } n=3; \\
                             \cO(1)^2\oplus \cO(2)^{n-3}, & \text{for } n\ge 4.
                            \end{matrix*}\right. \]
        \item If $3\le e\le n$, we have:
        \[T_X|_C\cong  \left\{\begin{matrix*}[l]
                            \cO(e-2)\oplus \cO(e-1)^{e-2}, & \text{for } n=e;\\
                            \cO(e-1)^{e}\oplus \cO(e)^{n-e-1}, & \text{for } n>e.
                            \end{matrix*}\right. \]
    \end{enumerate}
    In addition, we obtain explicit examples of cubic hypersurfaces $X$ for each splitting type above.
\end{theorem}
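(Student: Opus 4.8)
The plan is to treat the three ranges of $e$ separately, in each case reading the splitting type off the tangent bundle sequence for $d=3$ and exhibiting an explicit cubic.

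For $e=1$ and $e=2$ the sequence splits. Computing slopes, $\mu(N_{C/X})=e-\frac{2}{n-2}\le 3$ for every $n\ge 3$, so Corollary \ref{slope_inequality_splits} gives $T_X|_C\cong N_{C/X}\oplus\cO(2)$ with $N_{C/X}$ the balanced bundle of degree $e(n-2)-2$ and rank $n-2$. Reading off the balanced splitting of $N_{C/X}$ (which is $\cO(-1)$, resp. $\cO$, when $n=3$, and $\cO^2\oplus\cO(1)^{n-4}$, resp. $\cO(1)^2\oplus\cO(2)^{n-4}$, when $n\ge4$) and appending the $\cO(2)$ summand yields exactly the stated bundles; explicit $X$ come from the explicit cubics with balanced $N_{C/X}$ of \cite{Mioranci_Normal_Bundle}.

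For $3\le e\le n$ I would induct on $n$ with base case $n=e$, where the target is $\cO(e-2)\oplus\cO(e-1)^{e-2}$. When $e\le 4$ one checks $\mu(N_{C/X})=e-\frac{2}{e-2}\le 3$, so Corollary \ref{slope_inequality_splits} again splits the sequence and delivers the balanced bundle directly. For $e\ge 5$ the sequence need not split, and I would build an explicit cubic $F$: since $\phi$ is surjective for $d\ge 3$ (Proposition \ref{surjection_of_phi}), it suffices to prescribe the row $\psi_F=(C_1,\dots,C_{e-1})$ of forms of degree $2e-2$ and pass to $\delta_F=\psi_F\circ\beta$. I would then record a syzygy matrix $K_F\colon\cO(e-2)\oplus\cO(e-1)^{e-2}\to\cO(e+1)^e$ whose columns list $e-2$ quadratic and one cubic column relation of $\delta_F$ (generalizing $s^2C_1+stC_2+t^2C_3=0$ and $t^3C_1+s^3C_3=0$ of Example \ref{d3,e3,n3}), and verify that $K_F$ has maximal rank at every $(s,t)\in\P^1$ by the chart-by-chart reduction ($s=1$ and $t=1$, elementary operations) used in Theorem \ref{theorem_quadrics}. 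Since $K_F$ factors through $\ker\delta_F$ and agrees with it in rank and degree, this identifies $T_X|_C$.

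The passage to $n>e$ is then formal, via Proposition \ref{Ext_Proposition}. For the step $n=e\to n=e+1$, whose target $\cO(e-1)^e$ is perfectly balanced, I would take $J=J_1$ of Lemma \ref{lemma_J} with $a=e-2$, $r=1$, $s=e-2$, which injects $\cO(e-2)\oplus\cO(e-1)^{e-2}$ into $\cO(e-1)^e$; the companion $N_1$ with $K_f=N_1\cdot J_1$ is provided by that lemma, and the resulting map $N$ is injective by Lemma \ref{injectivity of N_1N_2}. Each further step $n\to n+1$ with $n\ge e+1$ only appends a summand $\cO(e)$, so $J=J_0$ of Lemma \ref{lemma_J0} suffices. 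At every stage $\delta=\coker N=(\delta_f;g)$ is induced by $F=f+G_nx_n$ through Lemma \ref{projectivelynormal}, so the examples stay explicit once the base cubic is fixed. The crux is precisely the base case $n=e$ with $e\ge 5$: producing a single cubic $F$ whose $\delta_F$ realizes the cubic-plus-$(e-2)$-quadratic syzygy pattern that forces the balanced splitting, and proving the pointwise maximal rank of $K_F$ uniformly in $e$. Everything else is either a slope computation through Corollary \ref{slope_inequality_splits} or a formal application of the Section 3 machinery.
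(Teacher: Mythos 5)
Your architecture coincides with the paper's: Corollary \ref{slope_inequality_splits} for $e=1,2$, an explicit base case at $n=e$, and the passage to $n>e$ via Proposition \ref{Ext_Proposition} implemented constructively with $J_1$ from Lemma \ref{lemma_J} for the first step ($a=e-2$, $r=1$, $s=e-2$ is the correct instantiation) and $J_0$ from Lemma \ref{lemma_J0} thereafter. Your slope shortcut for the base cases $e=3,4$ --- where $\mu(N_{C/X})=e-\tfrac{2}{e-2}\le 3$ forces $T_X|_C\cong N_{C/X}\oplus\cO(2)$ with $N_{C/X}$ balanced, giving $\cO(1)\oplus\cO(2)$ and $\cO(2)\oplus\cO(3)^2$ --- is a legitimate simplification of the paper's explicit $n=4$ computation, provided you import the explicit cubics with balanced normal bundle from \cite{Mioranci_Normal_Bundle} to keep the promised examples explicit.

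The genuine gap is the base case $n=e\ge 5$, which you defer in your final sentence and which is the actual content of the paper's proof. It is not routine, and the difficulty is not only the rank check: since $\delta$ must factor as $\psi\circ\beta$ (the image of $\phi_T$ has codimension $ed-1$), you must exhibit a specific $\psi_F$ whose induced $\delta_F$ carries exactly $e-2$ independent quadratic syzygies and one cubic syzygy, and it is not evident a priori which choice works --- indeed the pattern is not a uniform generalization of Example \ref{d3,e3,n3}, where $\psi=(s^4,t^4)$, nor of the $n=4$ case, where $\psi=(s^6,s^3t^3,t^6)$. The paper's choice for $n\ge 5$ is $F = x_0Q_{1,2}+x_1Q_{2,3}+\cdots+x_{n-4}Q_{n-3,n-2}+x_{n-2}Q_{n-2,n-1}+x_nQ_{n-1,n}$, whose $\psi_F$ has $t$-exponents jumping by $2$ except for the last two jumps of $3$; this asymmetry is what produces, beyond the $n-5$ visible relations $-t^2\cdot C_i+s^2\cdot C_{i+1}=0$ and the cubic relation $(s^3-t^3)\cdot C_1+s^2t\cdot C_2=0$, three further ``alternating'' quadratic relations whose coefficients depend on $n\bmod 3$ and must be written out case by case. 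The pointwise injectivity of the resulting $K_F$ is then established by a Gauss--Jordan reduction to a $5\times 4$ block whose $4\times 4$ minors are checked directly. None of this construction or verification appears in your proposal beyond the correct count of syzygy degrees, so part (3) for $e\ge 5$ is missing its central step; everything else in your plan is sound and matches the paper.
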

\begin{proof}
    Cases (1) $e=1$ and (2) $e=2$ follow from Corollary \ref{slope_inequality_splits}. Examples with balanced normal bundle are found in \cite[Theorem 3.1]{Mioranci_Normal_Bundle}.
    \begin{enumerate}[leftmargin=*]
        \setcounter{enumi}{2}
        \item Let $e\ge 3$. First, suppose that the case $n=e$ holds, and let us show how to run the induction for $n>e$. Recall that, for each step, it suffices to find matrices $J$ and $N_1$ of correct dimension and degree such that $K_{F}=N_1\cdot J$. Thus, for $n=e+1$, it follows from Lemma \ref{lemma_J} with $J$ of the form $J_1$. And for $n\ge e+2$, the result follows from Lemma \ref{lemma_J0} with $J$ of the form $J_0$. The following diagram summarizes the process:
        \[\begin{tikzcd}
        	{\mathcal{O}(e-1)^{e-2}\oplus \mathcal{O}(e-2)} & {\mathcal{O}(e+1)^{e}} & {\mathcal{O}(de)} & {(n=e)} \\
        	{\mathcal{O}(e-1)^{e}} & {\mathcal{O}(e+1)^{e}\oplus \mathcal{O}(e)} & {\mathcal{O}(de)} & {(n=e+1)} \\
        	{\mathcal{O}(e-1)^{e}\oplus \mathcal{O}(e)} & {\mathcal{O}(e+1)^{e}\oplus \mathcal{O}(e)^2} & {\mathcal{O}(de)} & {(n=e+2)} \\
        	\vdots & \vdots & \vdots
        	\arrow["{K_{F_e}}", from=1-1, to=1-2]
        	\arrow["{J_1}"', from=1-1, to=2-1]
        	\arrow["{\delta_{F_e}}", from=1-2, to=1-3]
        	\arrow[from=1-2, to=2-2]
        	\arrow[equals, from=1-3, to=2-3]
        	\arrow["{K_{F_{e+1}}}", from=2-1, to=2-2]
        	\arrow["{J_0}"', from=2-1, to=3-1]
        	\arrow["{\delta_{F_{e+1}}}", from=2-2, to=2-3]
        	\arrow[from=2-2, to=3-2]
        	\arrow[equals, from=2-3, to=3-3]
        	\arrow["{K_{F_{e+2}}}", from=3-1, to=3-2]
        	\arrow["{J_0}"', from=3-1, to=4-1]
        	\arrow["{\delta_{F_{e+2}}}", from=3-2, to=3-3]
        	\arrow[from=3-2, to=4-2]
        	\arrow[equals, from=3-3, to=4-3]
        \end{tikzcd}\]
        
        Therefore, it suffices to show the case $n=e$. We work separately on the cases $n=3, n=4$, and $n\ge 5$. The case $n=3$ was done in the examples \ref{d3,e3,n3} and \ref{d3e3n>3}. For the case $n=4$, consider $F = x_0Q_{1,2} + x_1Q_{2,3} + x_2Q_{3,4}$. It defines the map
        \[ \delta_{F} = \left ( s^6t, -s^7+s^3t^4, -s^4t^3+t^7, -st^6 \right ): \cO(5)^4\longrightarrow \cO(12) \]
        satisfying column relations that induce the matrix 
        \[ K_{F} = \begin{bmatrix}
            t^2 & st & s^3 \\
            0 & t^2 & s^2t \\
            s^2 & 0 & st^2 \\
            st & s^2 & t^3 
            \end{bmatrix} \]
        which has maximum rank for all $(s,t)\in \P^1$. Hence, the restricted tangent bundle is balanced, $T_{X}|_C\cong \cO(2)\oplus \cO(3)^2$.

        Assume now $n\ge 5$. Let 
        \[ F = x_0Q_{1,2} + x_1Q_{2,3} + \cdots + x_{n-4}Q_{n-3,n-2} + x_{n-2}Q_{n-2,n-1} + x_nQ_{n-1,n}. \]
        
        It induces the map on normal bundles $\psi_F: \cO(n+2)^{n-1}\to \cO(3n)$,
        \[ \psi_F = \left ( s^{2n-2}, s^{2n-4}t^2, s^{2n-6}t^4, \cdots , s^6t^{2n-8}, s^3t^{2n-5}, t^{2n-2} \right ). \]

        Observe that the degree in $t$ increases by $2$ in each entry of $\psi_F$, except for the last two entries, when it increases by $3$. We then get $\delta_F: \cO(n+1)^n\to \cO(3n)$ of the form
        \begin{multline*}
            \delta = \psi_F\circ \beta = ( s^{2n-2}t, -s^{2n-1}+s^{2n-4}t^3, -s^{2n-3}t^2+s^{2n-6}t^5, -s^{2n-5}t^4+s^{2n-8}t^7, \cdots ,\\ -s^9t^{2n-10}+s^6t^{2n-7}, -s^7t^{2n-8}+s^3t^{2n-4}, -s^4t^{2n-5}+t^{2n-1}, -st^{2n-2} ).
        \end{multline*}

        It satisfies the following column relations:
        \begin{itemize}[after=\vspace{\baselineskip}]
            \item $-t^2\cdot C_i + s^2\cdot C_{i+1} = 0$ for $2\le i\le n-4$;
            \item $(s^3-t^3)\cdot C_1 + s^2t\cdot C_2 = 0$.
        \end{itemize}

        It satisfies three additional ``alternating relations" that depend on $n\mod 3$. The relations end with different coefficients at the last columns $C_n, C_{n-1}, C_{n-2}, C_{n-3}$, and then keep alternating the coefficients $t^2, st, 0, t^2,st, 0, \ldots$ 

        If $n\equiv 0\mod 3$, they are:
        \begin{itemize}[after=\vspace{\baselineskip}]
            \item $t^2\cdot C_n + st\cdot C_{n-1} + s^2\cdot C_{n-2} + 0\cdot C_{n-3} + st\cdot C_{n-4} + 0 \cdot C_{n-5} + t^2\cdot C_{n-6} + st\cdot C_{n-7} + 0\cdot C_{n-8} + \cdots + t^2\cdot C_3 + st\cdot C_2 + s^2\cdot C_1 = 0$;
            
            \item $st\cdot C_n + s^2\cdot C_{n-1} + 0\cdot C_{n-2} + t^2\cdot C_{n-3} + st\cdot C_{n-4} + 0\cdot C_{n-5} + t^2\cdot C_{n-6} + st\cdot C_{n-7} + \cdots + t^2\cdot C_3 + st\cdot C_2 + s^2\cdot C_1 = 0$;
            
            \item $s^2\cdot C_n + 0\cdot C_{n-1} + t^2\cdot C_{n-2} + st\cdot C_{n-3} + 0\cdot C_{n-4} + t^2\cdot C_{n-5} + st\cdot C_{n-6} + 0\cdot C_{n-7} + \cdots + st\cdot C_3 + 0\cdot C_2 + t^2\cdot C_1 = 0$.
        \end{itemize}

        For $n\equiv 1\mod 3$, the relations are the same, except they differ at the first coefficients due to the alternation. They are:
        \begin{itemize}[after=\vspace{\baselineskip}]
            \item $t^2\cdot C_n + st\cdot C_{n-1} + s^2\cdot C_{n-2} + 0\cdot C_{n-3} + st\cdot C_{n-4} + 0 \cdot C_{n-5} + t^2\cdot C_{n-6} + st\cdot C_{n-7} + 0\cdot C_{n-8} + \cdots + st\cdot C_3 + 0\cdot C_2 + t^2\cdot C_1 = 0$;
            \item $st\cdot C_n + s^2\cdot C_{n-1} + 0\cdot C_{n-2} + t^2\cdot C_{n-3} + st\cdot C_{n-4} + 0\cdot C_{n-5} + t^2\cdot C_{n-6} + st\cdot C_{n-7} + \cdots + st\cdot C_3 + 0\cdot C_2 + t^2\cdot C_1 = 0$;
            \item $s^2\cdot C_n + 0\cdot C_{n-1} + t^2\cdot C_{n-2} + st\cdot C_{n-3} + 0\cdot C_{n-4} + t^2\cdot C_{n-5} + st\cdot C_{n-6} + 0\cdot C_{n-7} + \cdots + 0\cdot C_3 + t^2\cdot C_2 + st\cdot C_1 = 0$.
        \end{itemize}

        And if $n\equiv 2\mod 3$, the relations are:
        \begin{itemize}[after=\vspace{\baselineskip}]
            \item $t^2\cdot C_n + st\cdot C_{n-1} + s^2\cdot C_{n-2} + 0\cdot C_{n-3} + st\cdot C_{n-4} + 0 \cdot C_{n-5} + t^2\cdot C_{n-6} + st\cdot C_{n-7} + 0\cdot C_{n-8} + \cdots + 0\cdot C_3 + t^2\cdot C_2 + st\cdot C_1 = 0$;
            \item $st\cdot C_n + s^2\cdot C_{n-1} + 0\cdot C_{n-2} + t^2\cdot C_{n-3} + st\cdot C_{n-4} + 0\cdot C_{n-5} + t^2\cdot C_{n-6} + st\cdot C_{n-7} + \cdots + 0\cdot C_3 + t^2\cdot C_2 + st\cdot C_1 = 0$;
            \item $s^2\cdot C_n + 0\cdot C_{n-1} + t^2\cdot C_{n-2} + st\cdot C_{n-3} + 0\cdot C_{n-4} + t^2\cdot C_{n-5} + st\cdot C_{n-6} + 0\cdot C_{n-7} + \cdots + t^2\cdot C_3 + st\cdot C_2 + s^2\cdot C_1 = 0$.
        \end{itemize}

        We exhibit here the matrix $K_F$ for $n\equiv 0\mod 3$:
        \[ K_F = \begin{bmatrix}
        0 &  &  &  & s^2 & s^2 & t^2 & s^3-t^3 \\
        -t^2 &  &  &  & st & st & 0 & s^2t \\
        s^2 & -t^2 &  &  & t^2 & t^2 & st &  \\
         & s^2 &  &  & 0 & 0 & t^2 &  \\
         &  &  &  & \vdots & \vdots & \vdots &  \\
         &  & \ddots &  & st & st & 0 &  \\
         &  &  &  & t^2 & t^2 & st &  \\
         &  &  &  & 0 & 0 & t^2 &  \\
         &  &  & -t^2 & st & st & 0 &  \\
         &  &  & s^2 & 0 & t^2 & st &  \\
         &  &  & 0 & s^2 & 0 & t^2 &  \\
         &  &  & 0 & st & s^2 & 0 &  \\
         &  &  & 0 & t^2 & st & s^2 &  
        \end{bmatrix}. \]

        We still need to show $K_F$ is injective to confirm it is the kernel of $\delta_F$. We claim it has maximum rank $n-1$ at all points $(s,t)$ in $\P^1$. Suppose $t=1$; it is similar for $s=1$. We can show it by Gauss-Jordan elimination. Send the first row to the last one, and use the $-t^2=-1$ along the diagonal as pivots to make their rows and columns into zeros. This reduces $K_F$ to
        \[ K_F\sim \begin{bmatrix}
        1 &  &  &  &  &  &  &  \\
         & 1 &  &  &  &  &  &  \\
         &  &  &  &  &  &  &  \\
         &  &  &  &  &  &  &  \\
         &  & \ddots &  &  &  &  &  \\
         &  &  &  &  &  &  &  \\
         &  &  &  &  &  &  &  \\
         &  &  & 1 &  &  &  &  \\
         &  &  &  & s^2P_1 & 1+s^2P_2 & s+s^2P_3 & s^{2(n-4)} \\
         &  &  &  & s^2 & 0 & 1 & 0 \\
         &  &  &  & s & s^2 & 0 & 0 \\
         &  &  &  & 1 & s & s^2 & 0 \\
         &  &  &  & s^2 & s^2 & 1 & s^3-1 
        \end{bmatrix}, \]
        where $P_1,P_2,P_3$ are polynomials in $s$. Thus, it suffices to show that 
        \[ \begin{bmatrix}
        s^2P_1 & 1+s^2P_2 & s+s^2P_3 & s^{2(n-4)} \\
        s^2 & 0 & 1 & 0 \\
        s & s^2 & 0 & 0 \\
        1 & s & s^2 & 0 \\
        s^2 & s^2 & 1 & s^3-1 
        \end{bmatrix} \]
        has rank $4$ for all $s$, which can be verified directly by computing its $4\times 4$ minors. Therefore, $K_F$ is the kernel of $\delta_F$, and we get $T_X|_C\cong \cO(n-1)^{n-2}\oplus \cO(n-2)$.
    \end{enumerate}
\end{proof}

\begin{corollary}
    Let $X\subset \P^n$ be a general cubic hypersurface. If $e=2$ and $n\ge 5$; or $e\ge 3$, then $X$ contains a rational curve of degree $e\le n$ with balanced restricted tangent bundle.
\end{corollary}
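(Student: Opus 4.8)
The plan is to read the statement directly off Theorem~\ref{theorem_cubics}, which already records the splitting type of $T_X|_C$ for a general cubic $X$ containing a degree $e$ rational normal curve $C$, in every range $e\le n$. Because a degree $e$ rational normal curve exists in $\P^n$ precisely when $e\le n$, no gluing or degeneration is needed at this point; I would simply run through the three cases of Theorem~\ref{theorem_cubics} and check for which $(e,n)$ the displayed bundle satisfies the balancedness condition $|a_i-a_j|\le 1$.

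For $e\ge 3$ the check is immediate from case~(3): when $n=e$ the bundle is $\cO(e-2)\oplus\cO(e-1)^{e-2}$, with summand degrees $e-2$ and $e-1$ differing by $1$, and when $n>e$ it is $\cO(e-1)^{e}\oplus\cO(e)^{n-e-1}$, with degrees $e-1$ and $e$ differing by $1$; both are balanced. For $e=2$, case~(2) gives $T_X|_C\cong\cO(1)^2\oplus\cO(2)^{n-3}$ as soon as $n\ge 4$, whose summands differ by at most $1$, so it is balanced for all $n\ge 4$ and in particular for all $n\ge 5$, as claimed. The excluded degree $e=1$ is genuinely unbalanced by case~(1), consistent with the hypotheses.

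The only point that is not pure bookkeeping is the passage from the setup of Theorem~\ref{theorem_cubics}, where the standard rational normal curve $C$ is fixed and $X$ ranges over cubics containing it, to the corollary, which asserts the conclusion for a general cubic $X$ with no containment constraint. To bridge this I would argue on the incidence variety of pairs $(X,C)$: since all degree $e$ rational normal curves are projectively equivalent, this variety fibers over the space of such curves with fibers $\P(H^0(\cI_C(3)))$, and by openness of balancedness (Lemma~\ref{lemma_balanced_is_open_condition}) the pairs with $T_X|_C$ balanced form a dense open subset. Projection to the space of cubics is dominant because a general cubic in $\P^n$ (which is Fano for $n\ge 3$) contains a degree $e$ rational normal curve by the existence results for rational curves on general Fano hypersurfaces \cite{CR}; hence a general cubic contains such a $C$ with $T_X|_C$ balanced. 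No genuine difficulty remains once Theorem~\ref{theorem_cubics} is established.
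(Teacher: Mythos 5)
Your proposal is correct and matches the paper's (implicit) argument: the paper states this corollary without a separate proof, precisely because it is a direct read-off of the splitting types in Theorem~\ref{theorem_cubics}, and your incidence-variety argument for passing from a general cubic \emph{containing} $C$ to a general cubic is exactly the standard bridging step the paper uses tacitly throughout. As a side remark, your observation that $T_X|_C\cong \cO(1)^2\oplus\cO(2)^{n-3}$ is already balanced for $e=2$, $n=4$ is right and consistent with Theorem~\ref{theorem_cases_e=1_and_e=2} (where the threshold is $n\ge 2d-2=4$), so the corollary's hypothesis $n\ge 5$ is merely conservative and your proof covers the stated claim with room to spare.
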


\section{Quartics}

\begin{theorem}\label{theorem_quartics}
    Let $X\subset \P^n$ be a general quartic hypersurface containing a degree $e$ rational normal curve $C$.
    \begin{enumerate}
        \item If $e=1$, we have the following cases:
        \[T_X|_C\cong  \left\{\begin{matrix*}[l]
                            \cO(-2)\oplus \cO(2), & \text{for } n=3; \\
                             \cO(-1)\oplus \cO\oplus \cO(2), & \text{for } n=4; \\
                             \cO^3\oplus \cO(1)^{n-5}\oplus \cO(2), & \text{for } n\ge 5.
                            \end{matrix*}\right. \]

    \item If $e=2$, we have:
    \[T_X|_C\cong  \left\{\begin{matrix*}[l]
                            \cO(-2)\oplus \cO(2), & \text{for } n=3; \\
                             \cO^2\oplus \cO(2), & \text{for } n=4; \\
                             \cO\oplus \cO(1)^{2}\oplus \cO(2), & \text{for } n=5; \\
                             \cO(1)^{4}\oplus \cO(2)^{n-5}, & \text{for } n\ge 6.
                            \end{matrix*}\right. \]

    \item If $e=3$, we have:
    \[T_X|_C\cong  \left\{\begin{matrix*}[l]
                            \cO(-2)\oplus \cO(2), & \text{for } n=3; \\
                             \cO\oplus \cO(1)\oplus \cO(2), & \text{for } n=4; \\
                             \cO(1)^{2}\oplus \cO(2)^2, & \text{for } n=5; \\
                             \cO(1)\oplus \cO(2)^{4}, & \text{for } n=6; \\
                             \cO(2)^{6}\oplus \cO(3)^{n-7}, & \text{for } n\ge 7.
                            \end{matrix*}\right. \]

    \item If $e\ge 4$, we have:
    \[T_X|_C\cong  \left\{\begin{matrix*}[l]
                            \cO(e-3)^{2}\oplus \cO(e-2)^{e-3}, & \text{for } n=e;\\
                            \cO(e-2)^{2e-n+1}\oplus \cO(e-1)^{2(n-e-1)}, & \text{for } 2e+1\ge n>e; \\
                            \cO(e-1)^{2e}\oplus \cO(e)^{n-2e-1}, & \text{for } n > 2e+1.
                            \end{matrix*}\right. \]
    \end{enumerate}
    In addition, we obtain explicit examples of quartic hypersurfaces $X$ for each splitting type above.
\end{theorem}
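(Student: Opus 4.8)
The plan is to follow the strategy of the cubic case in Theorem \ref{theorem_cubics}: reduce every splitting type in item (4) to the diagonal base case $n=e$ via the inductive machinery of Section 3, treat that base case by an explicit quartic, and dispatch items (1)--(3) together with the small base cases by the splitting criterion of Corollary \ref{slope_inequality_splits}.

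First I would clear the cases in which $T_X|_C$ splits off a copy of $\cO(2)$. One computes $\mu(N_{C/X})=\frac{e(n-3)-2}{n-2}$, which is $<3$ for all $n$ when $e\le 3$, so items (1), (2), (3) follow at once: Corollary \ref{slope_inequality_splits} gives $T_X|_C\cong N_{C/X}\oplus\cO(2)$ with $N_{C/X}$ balanced, and the stated types are read off from $\lfloor\mu(N_{C/X})\rfloor$ and $\lceil\mu(N_{C/X})\rceil$, with explicit quartics realizing the balanced normal bundle taken from \cite{Mioranci_Normal_Bundle}. The same criterion settles the base case $n=e$ of item (4) when $e\in\{4,5\}$, since there $\mu(N_{C/X})|_{n=e}=\frac{e^2-3e-2}{e-2}\le 3$; the resulting $\cO(e-3)^2\oplus\cO(e-2)^{e-3}$ matches the claim.

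The heart of the proof is the base case $n=e$ for $e\ge 6$, where $\mu(N_{C/X})|_{n=e}>3$ and the bundle no longer splits. Here I would exhibit a concrete quartic $F=\sum_{i<j}F_{i,j}Q_{i,j}$ with quadratic coefficients $F_{i,j}$, chosen — in analogy with the cubic chain — so that the induced $\psi_F:\cO(e+2)^{e-1}\to\cO(4e)$ has monomial entries whose $t$-degree climbs by a fixed step along the chain, with two jumps inserted near the end to force the two lowest summands $\cO(e-3)^2$. Composing with $\beta$ yields $\delta_F=\psi_F\circ\beta$, and I would record a complete set of $n-1$ column relations among its entries: a generic family $-t^{a}C_i+s^{a}C_{i+1}=0$ for a fixed small exponent $a$, together with a few boundary and alternating relations whose coefficients depend on $e$ modulo a small integer. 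Arranging the coefficient vectors of these relations as the columns of a matrix $K_F:\cO(e-3)^2\oplus\cO(e-2)^{e-3}\to\cO(e+1)^e$ gives $\delta_F\circ K_F=0$, so $K_F$ factors through $\ker\delta_F=T_X|_C$; as the two sides agree in rank and degree, it remains only to show $K_F$ has maximal rank at every $(s,t)\in\P^1$. I expect this rank check to be the main obstacle: as in the cubic proof one sets $t=1$ (symmetrically $s=1$), clears the staircase part of $K_F$ by Gauss--Jordan using its leading pivots, and reduces to a constant-size block whose maximality is verified by its maximal minors — but the quadratic coefficients lengthen the alternation pattern, so the bookkeeping must be organised by the residue class of $e$ and the reduced block shown to have full rank uniformly.

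Finally, with the base case $n=e$ settled, I would propagate to all $n>e$ by induction, invoking Proposition \ref{Ext_Proposition} and Lemma \ref{injectivity of N_1N_2} at each step, the transition matrix being dictated by the change in splitting type. The single step $n=e\to e+1$ deletes both $\cO(e-3)$ summands and inserts $\cO(e-2)$'s, which is exactly $J_2$ from Lemma \ref{lemma_J2} with $a=e-3$, $r=2$, $s=e-3$; each step from $n=e+1$ up to $n=2e+1$ trades one $\cO(e-2)$ for two $\cO(e-1)$, realised by $J_1$ from Lemma \ref{lemma_J}; and every step with $n>2e+1$ merely appends a summand $\cO(e)$, handled by $J_0$ from Lemma \ref{lemma_J0}. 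In each case the matching $N_1$ comes from the recipe in the proof of the relevant lemma, the cokernel $N_2$ is the one recorded there (valued in $\cO(e)$), and the induced $\delta=(\delta_f;g)$ is realised by $F=f+G_n x_n$ through projective normality (Lemma \ref{projectivelynormal}), so the hypersurfaces $X$ stay explicit at every stage.
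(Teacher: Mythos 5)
Your proposal is correct and follows essentially the same route as the paper: items (1)--(3) and the diagonal base cases $n=e\in\{4,5\}$ via Corollary \ref{slope_inequality_splits} (with explicit examples from \cite{Mioranci_Normal_Bundle}), an explicit monomial $\psi_F$ with mixed step sizes plus column relations, a kernel matrix $K_F$, and a Gauss--Jordan rank check for the base case $n=e$ in higher degree, and then exactly the paper's $J_2$/$J_1$/$J_0$ induction, with your parameters $a=e-3$, $r=2$, $s=e-3$ for the step $n=e\to e+1$ matching the paper's transitions precisely. One arithmetic detail to fix when you carry out the base case: since the monomial entries of $\psi_F$ run from $s^{3n-2}$ to $t^{3n-2}$ in steps of size $3$ or $4$, the number of size-$4$ jumps is forced by the degree count to be $2d-4=4$, not two (the paper places them in the last four entries, and treats $n=6$ with a slightly modified polynomial before the uniform family at $n\ge 7$) --- this corrects a bookkeeping slip but changes nothing in your structure.
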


\begin{proof}
    The cases $e=1, 2, 3$ follow from Corollary \ref{slope_inequality_splits}. Examples with balanced normal bundle are shown in \cite[Theorem 3.1]{Mioranci_Normal_Bundle}.
    \begin{enumerate}[leftmargin=*]
    \setcounter{enumi}{3}
        \item Suppose first that we have proved the case $n=e$, and let us show how to apply the induction on $n$ to obtain the cases $n > e$. For each step, we have $K_F$ obtained from the previous step. Then, it suffices to find matrices $J$ and $N_1$ such that $K_F = N_1\cdot J$. For $n=e+1$, it follows from Lemma \ref{lemma_J2} with $J$ of the form $J_2$. For $2e+1\ge n > e$, it follows from Lemma \ref{lemma_J} with $J$ of the form $J_1$. And for $n>2e+1$, it follows from Lemma \ref{lemma_J0} with $J$ of the form $J_0$. 

        Therefore, it suffices to show the case $n=e$. We work the cases $n=4,5,6$ and $n\ge 7$ separately.
        
        Assume $n=4$. We choose $F = x_0^2Q_{1,2} + x_2^2Q_{2,3} + x_4^2Q_{3,4}$, which induces the map
        \[ \delta_F = \left (s^{10}t, -s^{11}+s^5t^6, -s^6t^5+t^{11}, -st^{10} \right ): \cO(5)^4\longrightarrow \cO(16). \]

        The column relations of $\delta_F$ define the kernel matrix
        \[ K_F = \begin{bmatrix}
            s^3 & t^4 & st^3 \\
            s^2t & 0 & t^4 \\
            st^2 & s^4 & 0 \\
            t^3 & s^3t & s^4 
            \end{bmatrix}. \]

        Since $K_F$ has maximum rank $3$, we have $T_X|_C\cong \cO(1)^2\oplus \cO(2)$. 
        
        Now, let $n=6$. We choose a slightly different polynomial in this case: $F = x_0^2Q_{1,2} + x_0x_3Q_{2,3} + x_3^2Q_{3,4} + x_3x_6Q_{4,5} + x_6^2Q_{5,6} + x_3^2Q_{3,6}$ induces $\delta_F : \cO(7)^6\to \cO(24)$ given by
        \[ \delta_F = \left ( s^{16}t, -s^{17}+s^{12}t^5, -s^{13}t^4+s^8t^9+s^6t^{11}, -s^9t^8+s^4t^{13}, -s^5t^{12}+t^{17}, -s^9t^8-st^{16} \right ). \]

        This map has kernel
        \[ K_F = \begin{bmatrix}
            t^3 & s^2t & s^3 & -s^2t^2-st^3 & s^2t^2-st^3 \\
            0 & st^2 & s^2t & -st^3-t^4 & st^3-t^4 \\
            s^3 & t^3 & st^2 & 0 & 0 \\
            s^2t & -s^2t & -s^3+t^3 & -s^4+s^2t^2-t^4 & -s^3t+st^3 \\
            st^2 & s^3 & 0 & -s^3t+st^3 & -s^2t^2 \\
            t^3 & s^2t & s^3 & -s^3t-s^2t^2+t^4 & s^4-st^3 
            \end{bmatrix}. \]

        Hence $T_X|_C\cong \cO(3)^2\oplus \cO(4)^3$. 

        Now, we work on the more general case $n\ge 7$. We consider polynomials $F$ of the form
        \begin{multline*}
             F = x_0^2Q_{1,2} + x_1^2Q_{2,3} + \cdots + x_{n-6}^2Q_{n-5,n-4}\\ + x_{n-5}x_{n-4}Q_{n-4,n-3} + x_{n-3}^2Q_{n-3,n-2} + x_{n-2,n-1}^2Q_{n-2,n-1} + x_n^2Q_{n-1,n}.
        \end{multline*}
        
        They induce a map $\psi_F: \cO(n+2)^{n-1}\to \cO(4n)$ on normal bundles,
        \[ \psi_F = \left ( s^{3n-2}, s^{3n-5}t^3, s^{3n-8}t^6, \cdots , s^{16}t^{3n-18}, s^{12}t^{3n-14}, s^8t^{3n-10}, s^4t^{3n-6}, t^{3n-2} \right ). \]

        The map $\psi_F$ starts with $s^{3n-2}$, and then the powers of $t$ increase by $3$ for each entry, except the last four entries, when it increases by $4$. It gives the map $\delta_F: \cO(n+1)^n\to \cO(4n)$:
        \begin{multline*}
            \delta_F = ( s^{3n-2}t, s^{3n-1}+s^{3n-5}t^4, -s^{3n-4}t^3+s^{3n-8}t^7, s^{3n-7}t^6+s^{3n-11}t^{10}, \cdots \\
            -s^{20}t^{3n-21}+s^{16}t^{3n-17}, -s^{17}t^{3n-18}+s^{12}t^{3n-13}, -s^{13}t^{3n-14}+s^8t^{3n-9},\\ -s^9t^{3n-10}+s^4t^{3n-5}, -s^5t^{3n-6}+t^{3n-1}, -st^{3n-2} ).
        \end{multline*}

        We look for the column relations of $\delta_F$ to define our kernel matrix $K_F$. We have $(n-6)$ ``simple relations":
        \begin{itemize}[after=\vspace{\baselineskip}]
            \item $-t^3\cdot C_i + s^3\cdot C_{i+1} = 0$ for $2\le i\le n-6$;
            \item $(s^4-t^4)\cdot C_1 + (s^3t)\cdot C_2 = 0$
        \end{itemize}

        Additionally, there are $5$ ``alternating relations" whose first coefficients depend on $n\mod 4$. We will display them for $n\equiv 0 \mod 4$. The other cases are very similar.
        
        The first $4$ relations end differently at $C_n, \ldots , C_{n-4}$ but then alternate the coeffiecients $t^3, st^2, s^2t, 0; t^3, st^2, s^2t, 0; ...$ then at $C_1$ they might break the sequence. They are:
        \begin{itemize}[after=\vspace{\baselineskip}]
            \item The one ending with $t^3$:\\
            $t^3\cdot C_n + st^2\cdot C_{n-1} + s^2t\cdot C_{n-2} + s^3\cdot C_{n-3} + 0\cdot C_{n-4} + t^3\cdot C_{n-5} + st^2\cdot C_{n-6} + s^2t\cdot C_{n-7} + 0\cdot C_{n-8} + t^3\cdot C_{n-9} + st^2\cdot C_{n-10} + s^2t\cdot C_{n-11} + 0\cdot C_{n-12} + \cdots + t^3\cdot C_3 + st^2\cdot C_2 + s^2t\cdot C_1 = 0;$
            \item The one ending with $st^2$:\\
            $st^2\cdot C_n + s^2t\cdot C_{n-1} + s^3\cdot C_{n-2} + 0\cdot C_{n-3} + t^3\cdot C_{n-4} + st^2\cdot C_{n-5} + s^2t\cdot C_{n-6} + 0\cdot C_{n-7} + t^3\cdot C_{n-8} + st^2\cdot C_{n-9} + s^2t\cdot C_{n-10} + 0\cdot C_{n-11} + t^3\cdot C_{n-12} + \cdots + st^2\cdot C_3 + s^2t\cdot C_2 + s^3\cdot C_1 = 0;$
            \item The one ending with $s^2t$:\\
            $s^2t\cdot C_n + s^3\cdot C_{n-1} + 0\cdot C_{n-2} + t^3\cdot C_{n-3} + st^2\cdot C_{n-4} + s^2t\cdot C_{n-5} + 0\cdot C_{n-6} + t^3\cdot C_{n-7} + st^2\cdot C_{n-8} + s^2t\cdot C_{n-9} + 0\cdot C_{n-10} + t^3\cdot C_{n-11} + st^2\cdot C_{n-12} + \cdots + s^2t\cdot C_3 + 0\cdot C_2 + t^3\cdot C_1 = 0;$
            \item The one ending with $s^3$:\\
            $s^3\cdot C_n + 0\cdot C_{n-1} + t^3\cdot C_{n-2} + st^2\cdot C_{n-3} + s^2t\cdot C_{n-4} + 0\cdot C_{n-5} + t^3\cdot C_{n-6} + st^2\cdot C_{n-7} + s^2t\cdot C_{n-8} + 0\cdot C_{n-9} + t^3\cdot C_{n-10} + st^2\cdot C_{n-11} + s^2t\cdot C_{n-12} + \cdots + 0\cdot C_3 + t^3\cdot C_2 + st^2\cdot C_1 = 0.$
        \end{itemize}

        The last relation ends at $C_{n-4}$ with coefficients $s^4, -t^4, s^2t^2-st^3, -s^2t^2$, and then repeats the sequence $t^4, st^3-t^4, s^2t^2-st^3, -s^2t^2$, except at the coefficient of $C_1$. It is:
        \begin{itemize}[after=\vspace{\baselineskip}]
            \item $s^4\cdot C_{n-4} - t^4\cdot C_{n-5} + (s^2t^2-st^3)\cdot C_{n-6} - s^2t^2\cdot C_{n-7} + t^4\cdot C_{n-8} + (st^3-t^4)\cdot C_{n-9} + (s^2t^2-st^3)\cdot C_{n-10} - s^2t^2\cdot C_{n-11} + t^4\cdot C_{n-12} + (st^3-t^4)\cdot C_{n-13} + (s^2t^2-st^3)\cdot C_{n-14} - s^2t^2\cdot C_{n-15} + \cdots + t^4\cdot C_4 + (st^3-t^4)\cdot C_3 + (s^2t^2-st^3)\cdot C_2 + (s^3t-s^2t^2)C_1 = 0$.
        \end{itemize}
        
        We display here the matrix $K_F$ when $n\equiv 0\mod 4$:
        \[ K_F = \begin{bmatrix}
            0 &  &  &  &  & s^2t & s^3 & t^3 & st^2 & s^3t-s^2t^2 & s^4-t^4 \\
            -t^3 &  &  &  &  & st^2 & s^2t & 0 & t^3 & s^2t^2-st^3 & s^3t \\
            s^3 & -t^3 &  &  &  & t^3 & st^2 & s^2t & 0 & st^3-t^4 &  \\
             & s^3 &  &  &  & 0 & t^3 & st^2 & s^2t & t^4 &  \\
             &  &  &  &  & s^2t & s^2t & 0 & t^3 & -s^2t^2 &  \\
             &  &  &  &  & \vdots & \vdots & \vdots & \vdots & \vdots &  \\
             &  &  &  &  & 0 & t^3 & st^2 & s^2t & t^4 &  \\
             &  & \ddots &  &  & s^2t & 0 & t^3 & st^2 & -s^2t^2 &  \\
             &  &  &  &  & st^2 & s^2t & 0 & t^3 & s^2t^2-st^3 &  \\
             &  &  &  &  & t^3 & st^2 & s^2t & 0 & st^3-t^4 &  \\
             &  &  &  &  & 0 & t^3 & st^2 & s^2t & t^4 &  \\
             &  &  & -t^3 &  & s^2t & 0 & t^3 & st^2 & -s^2t^2 &  \\
             &  &  & s^3 & -t^3 & st^2 & s^2t & 0 & t^3 & s^2t^2-st^3 &  \\
             &  &  &  & s^3 & t^3 & st^2 & s^2t & 0 & -t^4 &  \\
             &  &  &  & 0 & 0 & t^3 & st^2 & s^2t & s^4 &  \\
             &  &  &  & 0 & s^3 & 0 & t^3 & st^2 & 0 &  \\
             &  &  &  & 0 & s^2t & s^3 & 0 & t^3 & 0 &  \\
             &  &  &  & 0 & st^2 & s^2t & s^3 & 0 & 0 &  \\
             &  &  &  & 0 & t^3 & st^2 & s^2t & s^3 & 0 &  
            \end{bmatrix}. \]
        
        We can check $K_F$ is injective by showing it has rank $n-1$ at all points $(s,t)$ in $\P^1$. This can be done by Gauss-Jordan elimination. Consider $t=1$; the case $s=1$ is similar. Send the first row to the last position, and use the $-t^3=-1$ along the diagonal as pivots to make their rows and columns zero. This process reduces $K_F$ to
        \[ K_F\sim \begin{bmatrix}
            1 &  &  &  &  &  &  &  &  &  &  \\
             & 1 &  &  &  &  &  &  &  &  &  \\
             &  &  &  &  &  &  &  &  &  &  \\
             &  &  &  &  &  &  &  &  &  &  \\
             &  &  &  &  &  &  &  &  &  &  \\
             &  &  &  &  &  &  &  &  &  &  \\
             &  & \ddots &  &  &  &  &  &  &  &  \\
             &  &  &  &  &  &  &  &  &  &  \\
             &  &  &  &  &  &  &  &  &  &  \\
             &  &  &  &  &  &  &  &  &  &  \\
             &  &  & 1 &  &  &  &  &  &  &  \\
             &  &  &  & 1 &  &  &  &  &  &  \\
             &  &  &  & 0 & 1+s^3P_1 & s+s^3P_2 & s^2+s^3P_3 & s^3P_4 & -1+s^3P_5 & s^{3(n-6)} \\
             &  &  &  & 0 & 0 & 1 & s & s^2 & s^4 & 0 \\
             &  &  &  & 0 & s^3 & 0 & 1 & s & 0 & 0 \\
             &  &  &  & 0 & s^2 & s^3 & 0 & 1 & 0 & 0 \\
             &  &  &  & 0 & s & s^2 & s^3 & 0 & 0 & 0 \\
             &  &  &  & 0 & 1 & s & s^2 & s^3 & 0 & 0 \\
             &  &  &  & 0 & s^2 & s^3 & 1 & s & s^3-s^2 & s^4-1 
            \end{bmatrix}, \]
            where $P_1, \ldots , P_5$ are polynomials in $s$. Thus, it suffices to show that
            \[ \begin{bmatrix}
            1+s^3P_1 & s+s^3P_2 & s^2+s^3P_3 & s^3P_4 & -1+s^3P_5 & s^{3(n-6)} \\
            0 & 1 & s & s^2 & s^4 & 0 \\
            s^3 & 0 & 1 & s & 0 & 0 \\
            s^2 & s^3 & 0 & 1 & 0 & 0 \\
            s & s^2 & s^3 & 0 & 0 & 0 \\
            1 & s & s^2 & s^3 & 0 & 0 \\
            s^2 & s^3 & 1 & s & s^3-s^2 & s^4-1 
            \end{bmatrix} \]
            has rank $6$ for all $s$, which can be done directly by computing its $6\times 6$ minors. Therefore, $T_X|_C\cong \cO(n-3)^2\oplus \cO(n-2)^{n-3}$.
    \end{enumerate}
\end{proof}

\begin{corollary}
    Let $X\subset \P^n$ be a general quartic hypersurface. If $e=2$ and $n\ge 6$; or $e=3$ and $n\ge 5$; or $e\ge 4$, then $X$ contains a degree $e\le n$ rational curve with balanced restricted tangent bundle.
\end{corollary}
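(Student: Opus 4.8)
The plan is to obtain the corollary by reading off the balanced splitting types directly from Theorem \ref{theorem_quartics}, using the criterion recorded in Section \ref{section_vector_bundles} that a bundle $\bigoplus_i \cO(a_i)$ on $\P^1$ is balanced precisely when $\max_i a_i - \min_i a_i \le 1$. In each asserted range the witnessing curve is the degree $e$ rational normal curve $C$ of the theorem, so it is enough to verify in each listed case that $T_X|_C$ has all of its summand degrees supported on two consecutive integers.

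I would first treat $e\ge 4$, which is uniform in $n$. In part (4) the summands are drawn from $\{e-3,e-2\}$ when $n=e$, from $\{e-2,e-1\}$ when $e<n\le 2e+1$, and from $\{e-1,e\}$ when $n>2e+1$; in all three subranges the two degrees differ by exactly one, so $T_X|_C$ is balanced for every $n\ge e$. The only incidental checks are that the exponents $2e-n+1$ and $2(n-e-1)$ are nonnegative on $e<n\le 2e+1$ and that the ranks sum to $n-1$, both immediate. Next, for $e=3$ I would inspect part (3): the types $\cO(1)^2\oplus\cO(2)^2$, $\cO(1)\oplus\cO(2)^4$ and $\cO(2)^6\oplus\cO(3)^{n-7}$ are balanced for $n=5$, $n=6$ and $n\ge 7$ respectively, whereas the $n=3,4$ types carry a gap of at least two, giving the threshold $n\ge 5$. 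Finally, for $e=2$ part (2) shows that only the last type $\cO(1)^4\oplus\cO(2)^{n-5}$, which occurs for $n\ge 6$, is balanced, since the $n=3,4,5$ types each have a gap $\ge 2$; this yields the threshold $n\ge 6$. The $e=1$ types of part (1) always contain such a gap, which is why $e=1$ is correctly absent from the statement.

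Since Theorem \ref{theorem_quartics} already produces exactly these splitting types for the general quartic containing the degree $e$ rational normal curve, the existence of a degree $e$ rational curve with balanced $T_X|_C$ on a general quartic follows in each listed case. I expect no genuine obstacle here: all of the analytic content lives in Theorem \ref{theorem_quartics}, and this corollary is essentially bookkeeping of which splitting types fail to be balanced. The one point meriting a moment's attention is the implicit genericity transfer, namely that a general quartic in $\P^n$ actually contains such a rational normal curve in the stated ranges of $(e,n)$ — equivalently, that the family of pairs $(X,C)$ dominates the space of quartics — which holds throughout the relevant range and is already built into the hypotheses of the theorem.
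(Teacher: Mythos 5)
Your proposal is correct and coincides with the paper's intended argument: the paper states this corollary without proof as immediate bookkeeping from Theorem \ref{theorem_quartics}, and your case-by-case check that the listed splitting types have summand degrees on two consecutive integers exactly for $e=2,\,n\ge 6$; $e=3,\,n\ge 5$; and $e\ge 4$ (with the gap $\ge 2$ in all excluded cases) is precisely that bookkeeping. Your closing remark on the genericity transfer from pairs $(X,C)$ to the general quartic is also the right point to flag, and it holds here since the balanced (hence free) curves in question deform with the hypersurface.
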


\section{Higher-degree curves}

\begin{theorem}\label{theorem_higher_degree}
    Let $X\subset \P^n$ be a general degree $d\ge 4$ hypersurface containing a degree $e\le n$ rational normal curve $C$. If $e \ge 2d-2$, then the restricted tangent bundle $T_X|_C$ is balanced.
\end{theorem}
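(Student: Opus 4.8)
\emph{Strategy.} I would run the same induction on $n\ge e$ that drove the cubic and quartic cases and reduce everything to the diagonal case $n=e$. Suppose we have already produced a general degree $d$ hypersurface $Y\subset\P^{n-1}$ containing the degree $e$ rational normal curve $C$ with $T_Y|_C$ balanced. Since $\mu(T_X|_C)=\tfrac{e(n+1-d)}{n-1}\le e$ for every $d\ge 2$, the quotient $\cO(e)$ appearing in Proposition \ref{Ext_Proposition} has degree at least the slope of $E$, so a general extension of $\cO(e)$ by the balanced bundle $T_Y|_C$ is again balanced; Proposition \ref{Ext_Proposition} then yields a general degree $d$ hypersurface $X\subset\P^n$ with $T_X|_C$ balanced. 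Thus it suffices to treat the base case $n=e$.

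\emph{Base case $n=e$.} Here $C$ is the degree $n$ rational normal curve in $\P^n$, $T_{\P^n}|_C\cong\cO(n+1)^n$, and the tangent bundle sequence reads
\[ 0\longrightarrow T_X|_C\longrightarrow \cO(n+1)^n\overset{\delta_F}{\longrightarrow}\cO(dn)\longrightarrow 0. \]
A rank/degree count identifies the balanced bundle of the correct invariants as $E_0=\cO(n+1-d)^{d-2}\oplus\cO(n+2-d)^{n+1-d}$, two consecutive degrees because $n(n+1-d)=(n-1)(n+1-d)+(n+1-d)$. By the openness of balancedness (Lemma \ref{lemma_balanced_is_open_condition}) it is enough to exhibit one hypersurface $X$ with $T_X|_C\cong E_0$, after which balancedness propagates to the general member of the family. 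Since $\phi$ is surjective for $d\ge 3$ (Proposition \ref{surjection_of_phi}), I am free to prescribe the normal bundle map $\psi$, hence $\delta_F=\psi\circ\beta$, so the remaining task is purely computational.

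\emph{The explicit construction.} First I would write down a candidate $F=\sum_{i=1}^{n-1}x_{\ast}^{\,d-1}Q_{i,i+1}+(\text{end corrections})$, generalizing the polynomials used in Theorems \ref{theorem_cubics} and \ref{theorem_quartics}; its induced map $\psi_F$ has $t$-degree increasing by $d-1$ along consecutive columns (by $d$ in the last few columns, which the end corrections arrange). Then $\delta_F=\psi_F\circ\beta$ satisfies the $(n-d)$ simple column relations $-t^{\,d-1}C_i+s^{\,d-1}C_{i+1}=0$ together with $d$ alternating relations whose leading coefficients depend on $n\bmod d$. Assembling the coefficient vectors of these relations into a matrix $K_F$ gives a map $E_0\to\cO(n+1)^n$ factoring through $\ker\delta_F$; as $E_0$ and $T_X|_C$ agree in rank and degree, it remains to check that $K_F$ has maximal rank $n-1$ at every point $(s,t)\in\P^1$. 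I would verify this exactly as in the quartic case: split into the charts $s=1$ and $t=1$, clear the diagonal $-t^{\,d-1}$ (resp.\ $-s^{\,d-1}$) entries by row operations, and reduce to a fixed-size minor independent of $n$, whose nonvanishing for all $(s,t)$ is checked directly.

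\emph{Main obstacle.} The hard part is precisely this base case for general $d$: pinning down the end corrections in $F$ so that the $d$ alternating relations close up consistently for every residue of $n\bmod d$, and confirming that the reduced fixed-size minor of $K_F$ never vanishes. Once the diagonal case is secured, the passage to $n>e$ is the soft extension argument of Proposition \ref{Ext_Proposition} (equivalently, iterating the maps $J_0,J_1,J_2$ of Lemmas \ref{lemma_J0}, \ref{lemma_J}, \ref{lemma_J2} and their higher analogues), and the hypothesis $e\ge 2d-2$ is what guarantees the slope $\mu(T_X|_C)>1$ that keeps each such extension balanced and places us outside the range already handled in Section 3.
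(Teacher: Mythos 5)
Your architecture is exactly the paper's: reduce to the diagonal case $n=e$ by Proposition \ref{Ext_Proposition} and induction on $n$, identify the balanced target $E_0\cong\cO(n+1-d)^{d-2}\oplus\cO(n+2-d)^{n+1-d}$, use the surjectivity of $\phi$ (Proposition \ref{surjection_of_phi}) to prescribe $\psi_F$ with $t$-degrees increasing by $d-1$ and then by $d$, pass to $\delta_F=\psi_F\circ\beta$, assemble column relations into a matrix $K_F$, and verify maximal rank at every $(s,t)$ by Gauss--Jordan reduction to a bounded-size minor. The soft steps are sound, and your slope-based justification of the inductive step is at the same level of detail as the paper's own.

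The genuine gap is the base case, which you explicitly defer, and your sketch of it is already quantitatively inconsistent with your own target $E_0$. You posit $n-d$ simple relations $-t^{d-1}C_i+s^{d-1}C_{i+1}=0$ plus $d$ alternating relations: that is $n$ relations, one more than the rank $n-1$ of the kernel; worse, if all of them had coefficient degree $d-1$, the kernel would be built entirely from summands $\cO(n+2-d)$, which has the wrong degree. Matching $E_0$ forces exactly $d-2$ relations of coefficient degree $d$ (each contributing a summand $\cO(n+1-d)$) and $n+1-d$ relations of degree $d-1$ (each contributing $\cO(n+2-d)$). The paper's actual inventory is: $n-2d+1$ simple relations $-t^{d-1}C_i+s^{d-1}C_{i+1}=0$ for $2\le i\le n-2d+2$; $d-4$ simple relations $-t^{d}C_i+s^{d}C_{i+1}=0$ for $n-2d+4\le i\le n-d-1$; $d$ alternating relations of degree $d-1$; the special relation $(s^d-t^d)C_1+s^{d-1}tC_2=0$; and one further alternating relation of degree $d$ --- totalling $n-1$ with the correct degree profile. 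Producing this list for every residue of $n\bmod d$ and then verifying that the reduced $(d+1)$-column minor of $K_F$ has full rank for all $s$ is the actual content of the theorem; without it your text is a plan rather than a proof. Note also that the hypothesis $e\ge 2d-2$ enters not through $\mu(T_X|_C)>1$ but through \cite[Corollary 3.8]{CR}: it guarantees the balanced normal bundle has the shape $\cO(n+2-d)^{2d-4}\oplus\cO(n+3-d)^{n-2d+2}$, i.e., $n-2d+2\ge 0$, which is precisely what the staggered $\psi_F$ and the relation counts above require.
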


\begin{proof}
    By Proposition \ref{Ext_Proposition} and induction on $n$, it suffices to prove the theorem for $e=n$.

    By \cite[Corollary 3.8]{CR}, the normal bundle $N_{C/X}$ is balanced, and for $n\ge 2d-2$ it has the form
    \[ N_{C/X}\cong \cO(n+2-d)^{2d-4}\oplus \cO(n+3-d)^{n-2d+2}. \]

    It is induced by a map $\psi_F: \cO(n+2)^{n-1}\to \cO(dn)$ having $2d-4$ column relations with degree $d$ and $n-2d+2$ columns relations with degree $d-1$. To obtain such a $\psi_F$, we start with the entry $s^{dn-n-2}$ and increase the powers of $t$ by $d-1$ for the first $n-2d+2$ entries, and then increase it by $d$ for the remaining ones. That is, we use the following $\psi_F$:
    \begin{multline*}
        \psi_F = ( s^{dn-n-2}, s^{(dn-n-2)-(d-1)}t^{d-1}, s^{(dn-n-2)-2(d-1)}t^{2(d-1)}, \cdots,\\
        s^{(dn-n-2)-(n-2d+1)(d-1)}t^{(n-2d+1)(d-1)}, s^{(2d-4)d}t^{(dn-n-2)-(2d-4)d}, s^{(2d-3)d}t^{(dn-n-2)-(2d-3)d} , \cdots , \\
        s^{2d}t^{(dn-n-2)-2d}, s^{d}t^{(dn-n-2)-d}, t^{dn-n-2} ).
    \end{multline*}

    We know this $\psi_F$ is indeed induced by a degree $d$ polynomial $F$ by Proposition \ref{surjection_of_phi}. It is not difficult to obtain examples of $F$ for a given $\psi_F$. Hence, this same polynomial induces the map on tangent bundles $\delta: \cO(n+1)^n\to \cO(dn)$:
    \begin{multline*}
        \delta_F = \psi_F\circ \beta = (s^{dn-n-2}t, -s^{dn-n-1}+s^{(dn-n-2)-(d-1)}t^{d},\\
        -s^{(dn-n-2)-(d-1)+1}t^{d-1}+s^{(dn-n-2)-2(d-1)}t^{2(d-1)+1}, \cdots ,\\
        -s^{(dn-n-2)-(n-2d+1)(d-1)+1}t^{(n-2d+1)(d-1)}+s^{(2d-4)d}t^{(dn-n-2)-(2d-4)d+1},\\
        -s^{(2d-4)d+1}t^{(dn-n-2)-(2d-4)d}+s^{(2d-3)d}t^{(dn-n-2)-(2d-3)d+1},\cdots ,\\
        -s^{2d+1}t^{(dn-n-2)-2d}+s^dt^{(dn-n-2)-d+1},-s^{d+1}t^{(dn-n-2)-d}+t^{dn-n-1},-st^{dn-n-2}).
    \end{multline*}

    Call the $n$ entries of $\delta_F$ by $C_1, \ldots , C_n$. We will compute the kernel of $\delta_F$ by finding $n-1$ independent relations between these entries. There are $n-2d+1$ relations of degree $d-1$ of the form
    \begin{itemize}
        \item $-t^{d-1}C_i + s^{d-1}C_{i+1} = 0$ for $2\le i\le n-2d+2$;
    \end{itemize}
    and $d-4$ relations of degree $d$ given by
    \begin{itemize}
        \item $-t^dC_i + s^dC_{i+1} = 0$ for $n-2d+4\le i\le n-d-1$.
    \end{itemize}

    We also have $d$ ``alternating relations" of degree $d-1$. The first four end with 
    \begin{itemize}
        \item $s^it^{d-1-i}C_n + s^{i+1}t^{d-2-i}C_{n-1}+\cdots + s^{d-2}tC_{n-(d-2-i)} + s^{d-1}C_{n-(d-1-i)}+\cdots $
    \end{itemize}
    for $0\le i\le 3$, and repeat the sequence of coefficients $0, t^{d-1}, st^{d-2}, \ldots , s^{d-2}t$ for the remaining entries. We repeat this sequence as it is until $C_2$, whose coefficient will depend on $n \mod d$. The coefficient of $C_1$ might differ from the sequence: if the next term in the sequence is $0$, then use $s^{d-1}$ instead; otherwise, use the expected coefficient. For example, if $n\equiv 0 \mod d$, then the relation ending with $t^{d-1}$ is:
    \begin{multline*}
        (t^{d-1}C_n + st^{d-2}C_{n-1} + \cdots + s^{d-1}C_{n-d+1}) \\
        + (0\cdot C_{n-d} + t^{d-1}C_{n-d-1} + st^{d-2}C_{n-d-2} + \cdots + s^{d-2}tC_{n-2d+1}) \\
        + (0\cdot C_{n-2d} + t^{d-1}C_{n-2d-1}+\cdots +s^{d-2}tC_{n-3d+1}) + \cdots\\
        + (0\cdot C_{d} + t^{d-1}C_{d-1}+\cdots +s^{d-2}tC_{1}) = 0. 
    \end{multline*}

    The next relation, ending with $s^4t^{d-5}C_n$, ends with
    \begin{itemize}
        \item $(s^4t^{d-5}C_n + s^{5}t^{d-6}C_{n-1}+\cdots + s^{d-2}tC_{n-(d-6)} + s^{d-1}C_{n-(d-5)})+ 0\cdot C_{n-d+4} +(t^{d-1}C_{n-d+3} + st^{d-2}C_{n-d+2}+\cdots + s^{d-1}C_{n-2d+4}) + 0\cdot C_{n-2d+3} + (st^{d-2}C_{n-2d+2}+\cdots $
    \end{itemize}
    and then they start repeating the sequence $0,t^{d-1},\ldots , s^{d-2}t$ as for the four ones above. Notice it skips the coefficient $t^{d-1}$ that would be in $C_{n-2d+2}$.

    The remaining $d-5$ alternating relations end with $s^it^{d-1-i}C_n$ for $5\le i\le d-1$. They are similar to the relation above, but they end with
    \begin{itemize}
        \item $(s^it^{d-1-i}C_n + s^{i+1}t^{d-2-i}C_{n-1}+\cdots + s^{d-2}tC_{n-(d-2-i)} + s^{d-1}C_{n-(d-1-i)})+ 0\cdot C_{n-d+i} +(t^{d-1}C_{n-d+i-1} + st^{d-2}C_{n-d+i-2}+\cdots + s^{d-1}C_{n-2d+i}) +\cdots $
    \end{itemize}
    for $5\le i\le d-1$ and then they start repeating the sequence $0,t^{d-1},\ldots , s^{d-2}t$ as for the five ones above. The reason we divide them into these three groups is due to the $(2d-4)$ column relations of degree $d$ followed by the $n-2d+2$ relations of degree $d-1$ of $\psi_F$, which divide $\psi_F$ into two parts.

    We also have the degree $d$ relation
    \begin{itemize}
        \item $(s^d-t^d)C_1 + s^{d-1}tC_2 = 0$.
    \end{itemize}

    And finally, an additional alternating relation of degree $d$. It ends at $C_{n-2d+4}$ with the sequence of coefficients:
    \begin{itemize}
        \item $s^dC_{n-2d+4} - t^dC_{n-2d+3} + (s^2t^{d-2}-st^{d-1})C_{n-2d+2} + (s^3t^{d-3}-s^2t^{d-2})C_{n-2d+1} + \cdots + (s^{d-2}t^2-s^{d-3}t^3)C_{n-3d+6} + (-s^{d-2}t^2)C_{n-3d+5} + \cdots$
    \end{itemize}
    then, for the remaining entries, we repeat the sequence of coefficients $t^d, st^{d-1}-t^{d}, s^2t^{d-2}-st^{d-1},\ldots , s^{d-2}t^{2}-s^{d-3}t^3, -s^{d-2}t^2$. As with the other alternating relations, the sequence has $d$ terms, then the coefficient of $C_1$ will depend on $n \mod d$.
    
    These give us all the relations we need. They form the columns of the matrix $K_F$, which we show here for the case $n\equiv 0 \mod d$:
    
    \[ \begin{bsmallmatrix}
0 &  &   & s^{d-2}t & s^{d-1} & t^{d-1} & st^{d-2} & s^2t^{d-3} & s^3t^{d-4} & s^4t^{d-5} &  & s^{d-5}t^4 & s^{d-4}t^3 & s^d-t^d &  s^3t^{d-3}-s^2t^{d-2}  &  &  &  \\
-t^{d-1} &  &  & s^{d-3}t^2 & s^{d-2}t & 0 & t^{d-1} & st^{d-2} & st^{d-2} & s^3t^{d-4} &  & s^{d-6}t^5 & s^{d-5}t^4 & s^{d-1}t &  s^2t^{d-2}-st^{d-1} &  &  &  \\
s^{d-1} &  &  & \vdots & s^{d-3}t^2 & s^{d-2}t & 0 & t^{d-1} & t^{d-1} & st^{d-2} & \cdots & s^{d-7}t^6 & s^{d-6}t^5 & 0 & st^{d-1}-t^d &  &  &  \\
 &  &  & \vdots & \vdots & s^{d-3}t^2 & s^{d-2}t & 0 & 0 & t^{d-1} &  & s^{d-8}t^7 & s^{d-7}t^6 & 0 & t^{d} &  &  &  \\
 &  &  & t^{d-1} & \vdots & \vdots & s^{d-3}t^2 & s^{d-2}t & s^{d-2}t & 0 &  & s^{d-9}t^8 & s^{d-8}t^7 & 0 & \vdots &  &  &  \\
  &  &   & 0 & t^{d-1} & \vdots & \vdots & s^{d-3}t^2 & s^{d-3}t^2 & s^{d-2}t & \ddots & s^{d-10}t^9 & s^{d-9}t^8 & 0 & \vdots &  &  &  \\
 &  &  & s^{d-2}t & 0 & t^{d-1} & \vdots & \vdots & \vdots & s^{d-3}t^2 &  & s^{d-11}t^{10} & s^{d-10}t^9 & \vdots & -s^{d-2}t^2 &  &  &  \\
 & \ddots &  & \vdots & s^{d-2}t & 0 & t^{d-1} & \vdots & \vdots & \vdots &  & \vdots & \vdots & \vdots & \vdots &  &  &  \\
 &   &   & st^{d-2} & \vdots & s^{d-2}t & 0 & t^{d-1} & t^{d-1} & \vdots & \ddots & \vdots & \vdots &  & s^{d-1}t-t^d &  &  &  \\
 &  &   & t^{d-1} & st^{d-2} & \vdots & s^{d-2}t & 0 & 0 & t^{d-1} &   & s^8t^{d-9} & s^9t^{d-10} &  & t^d &  &  &  \\
 &  &  & 0 & t^{d-1} & st^{d-2} & \vdots & s^{d-2}t & s^{d-2}t & 0 &  & s^7t^{d-8} & s^8t^{d-9} &  & -s^{d-2}t^2 &  &  &  \\
 &   &  & s^{d-2}t & 0 & t^{d-1} & st^{d-2} & \vdots & \vdots & s^{d-2}t & \ddots & s^6t^{d-7} & s^7t^{d-8} &  & \vdots &  &  &  \\
 &  & -t^{d-1} & s^{d-3}t^2 & s^{d-2}t & 0 & t^{d-1} & st^{d-2} & st^{d-2} & \vdots &  & \vdots & s^6t^{d-7} &  & s^2t^{d-2}-st^{d-1} &  &  &  \\
 &  & s^{d-1} & s^{d-4}t^3 & s^{d-3}t^2 & s^{d-2}t & 0 & 0 & t^{d-1} & st^{d-2} &  & s^{d-7}t^6 & \vdots &  & -t^d &  &  &  \\
 &  &  & s^{d-5}t^4 & s^{d-4}t^3 & s^{d-3}t^2 & s^{d-2}t & s^{d-1} & 0 & t^{d-1} &  & s^{d-8}t^7 & s^{d-7}t^6 &  & s^d & -t^d &  &  \\
 &  &  & s^{d-6}t^5 & s^{d-5}t^4 & s^{d-4}t^3 & s^{d-3}t^2 & s^{d-2}t & s^{d-1} & 0 &  & s^{d-9}t^8 & s^{d-8}t^7 &  & 0 & s^d &   &  \\
 &  &  & \vdots & \vdots & \vdots & \vdots & \vdots & \vdots & \vdots & \ddots & \vdots & \vdots &  &  &  & \ddots &  \\
 &  &  & t^{d-1} & st^{d-2} & s^2t^{d-3} & s^3t^{d-4} & s^4t^{d-5} & s^5t^{d-6} & s^6t^{d-7} &  & s^{d-2}t & s^{d-1} &  & 0 &  &  & -t^d \\
 &  &  & 0 & t^{d-1} & st^{d-2} & s^2t^{d-3} & s^3t^{d-4} & s^4t^{d-5} & s^5t^{d-6} &  & s^{d-3}t^2 & s^{d-2}t &  & 0 &  &  & s^d \\
 &  &  & s^{d-1} & 0 & t^{d-1} & st^{d-2} & s^2t^{d-3} & s^3t^{d-4} & s^4t^{d-5} &  & s^{d-4}t^3 & s^{d-3}t^2 &  & 0 &  &  & 0 \\
 &  &  & \vdots & \vdots & \vdots & \vdots & \vdots & \vdots & \vdots & \cdots & \vdots & \vdots &  & \vdots &  &  & \vdots \\
 &  &  & s^2t^{d-3} & s^3t^{d-4} & s^4t^{d-5} & s^5t^{d-6} & s^6t^{d-7} & s^7t^{d-8} & s^8t^{d-9} &  & 0 & t^{d-1} &  & 0 &  &  & 0 \\
 &  &  & st^{d-2} & s^2t^{d-3} & s^3t^{d-4} & s^4t^{d-5} & s^5t^{d-6} & s^6t^{d-7} & s^7t^{d-8} &  & s^{d-1} & 0 &  & 0 &  &  & 0 \\
 &  &  & t^{d-1} & st^{d-2} & s^2t^{d-3} & s^3t^{d-4} & s^4t^{d-5} & s^5t^{d-6} & s^6t^{d-7} & \cdots & s^{d-2}t & s^{d-1} &  & 0 &  &  & 0
\end{bsmallmatrix}. \]

Now, we are left with showing that $K_F: \cO(n+2-d)^{n-d+1}\oplus \cO(n+1-d)^{d-2}\to \cO(n+1)^n$ defines an injective map. We will show $K_F$ has maximum rank $n-1$ at all points $(s,t)\in \P^1$. Let $t=1$; the case $s=1$ is similar. We do it by Gauss-Jordan elimination. Move the first row to the last position, and use the $-t^{d-1}$ and the $-t^d$ along the diagonals to make their rows into zero. This shows that $K_F$ is equivalent to the matrix

\[ \begin{bsmallmatrix}
1 &   &  &   &   &   &   &   &   &   &   &   &   &  &   &  &  \\
 & \ddots &   &   &   &   &   &   &   &   &  &   &   &  &   &  &  \\
 &  & 1 &   &   &   &   &   &   &   &  &   &   &  &   &  &  \\
 &  &  &   &   &   &   &   &   &   &  &   &   &  &  1 &   &  \\
 &  &  &   &   &   &   &   &   &   &   &   &   &  &  & \ddots &  \\
 &  &  &   &   &   &   &   &   &   &  &   &   &  &   &  & 1 \\
 &  &  & s^{d-1}P_1 & 1+s^{d-1}P_2 & s+s^{d-1}P_3 & s^2+s^{d-1}P_4 & s^3+s^{d-1}P_5 & s^4+s^{d-1}P_6 & s^5+s^{d-1}P_7 & \cdots & s^{d-3}+s^{d-1}P_{d-1} & s^{d-2}+s^{d-1}P_{d} & s^{(d-1)M} &   &  & \\
 &  &  & s^{d-1} & 0 & 1  & s & s^2 & s^3 & s^4 &  & s^{d-4} & s^{d-3} & 0 &   &  & \\
 &  &  & \vdots & \vdots & \vdots & \vdots & \vdots & \vdots & \vdots & \cdots & \vdots & \vdots & \vdots &  &  & \\
 &  &  & s^2 & s^3 & s^4 & s^5 & s^6 & s^7 & s^8 &  & 0 &   & 0 &   &  & \\
 &  &  & s & s^2 & s^3 & s^4 & s^5 & s^6 & s^7 &  & s^{d-1} & 0 & 0 &   &  & \\
 &  &  & 1 & s & s^2 & s^3 & s^4 & s^5 & s^6 & \cdots & s^{d-2} & s^{d-1} & 0 &   &  &\\
 &  &   & s^{d-2} & s^{d-1} & 1 & s & s^2 & s^3 & s^4 &  & s^{d-5} & s^{d-4} & s^d-1 &   &  & 
\end{bsmallmatrix}. \]

Thus, we only need that the matrix
\[ \begin{bsmallmatrix}
s^{d-1}P_1 & 1+s^{d-1}P_2 & s+s^{d-1}P_3 & s^2+s^{d-1}P_4 & s^3+s^{d-1}P_5 & s^4+s^{d-1}P_6 & s^5+s^{d-1}P_7 & \cdots & s^{d-3}+s^{d-1}P_{d-1} & s^{d-2}+s^{d-1}P_{d} & s^{(d-1)M} \\
s^{d-1} & 0 &  1 & s & s^2 & s^3 & s^4 &  & s^{d-4} & s^{d-3} & 0 \\
\vdots & \vdots & \vdots & \vdots & \vdots & \vdots & \vdots & \cdots & \vdots & \vdots & \vdots \\
s^2 & s^3 & s^4 & s^5 & s^6 & s^7 & s^8 &  & 0 &   & 0 \\
s & s^2 & s^3 & s^4 & s^5 & s^6 & s^7 &  & s^{d-1} & 0 & 0 \\
1 & s & s^2 & s^3 & s^4 & s^5 & s^6 & \cdots & s^{d-2} & s^{d-1} & 0\\
s^{d-2} & s^{d-1} & 1 & s & s^2 & s^3 & s^4 &  & s^{d-5} & s^{d-4} & s^d-1
\end{bsmallmatrix} \]
has rank $d+1$. This can be shown by using the diagonal of $1$'s and induction. Therefore, we get $T_X|_C\cong \cO(n+2-d)^{n-d+1}\oplus \cO(n+1-d)^{d-2}$.
\end{proof}

By Corollary \ref{Corollary_Section_3} and Theorem \ref{theorem_higher_degree}, we have shown so far that a general Fano hypersurface $X\subset \P^n$ of degree $d\ge 3$ contains rational curves of degree $e$ with balanced restricted tangent bundle for every $\frac{n-1}{n+1-d} < e\le \max \{2d-2,n\}$. By Lemma \ref{gluing_balanced_curves}, we can glue a rational curve of degree $e_1$ with balanced restricted tangent bundle to a curve of degree $e_2$ with perfectly balanced restricted tangent bundle to obtain a degree $e_1+e_2$ rational curve with balanced restricted tangent bundle. This allows us to extend our result for all degrees $e$.

\begin{theorem}\label{theorem_higher_degree_curves}
    Let $X\subset \P^n$ be a general degree $d\ge 3$ Fano hypersurface. Then $X$ contains degree $e$ rational curves with balanced restricted tangent bundle for every degree $e > \frac{n-1}{n+1-d}$.
\end{theorem}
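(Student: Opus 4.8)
The plan is to reduce to the range already settled and then propagate balancedness upward by gluing. By Corollary \ref{Corollary_Section_3} together with Theorem \ref{theorem_higher_degree}, a general $X$ already carries degree $e$ rational curves with balanced $T_X|_C$ for every $e$ in the base interval $\frac{n-1}{n+1-d} < e \le U$, where $U = \max\{2d-2,\,n\}$. So it remains to produce such curves for all $e > U$, and for this I would invoke Lemma \ref{gluing_balanced_curves}: gluing a balanced curve of degree $e_1$ to a \emph{perfectly} balanced curve of degree $e_2$ and smoothing yields a balanced curve of degree $e_1+e_2$. The whole argument therefore hinges on having a good stock of perfectly balanced building blocks.

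First I would exhibit one such block. Since $T_X|_C$ has rank $n-1$ and degree $e(n+1-d)$, a balanced curve is perfectly balanced exactly when $(n-1)\mid e(n+1-d)$, in which case $T_X|_C\cong \cO(k)^{n-1}$ with $k=\frac{e(n+1-d)}{n-1}$; note $k\ge 2$ is forced by Proposition \ref{tangent_degree_less_than_2}. Let $e_0$ be the smallest degree in the base interval with $(n-1)\mid e_0(n+1-d)$. Being in the base interval it is balanced, hence perfectly balanced, $T_X|_{C_0}\cong \cO(k_0)^{n-1}$, and self-gluing $C_0$ (again by Lemma \ref{gluing_balanced_curves}) gives perfectly balanced curves of every positive multiple of $e_0$.

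The bootstrapping step is then combinatorial. Writing $L=\lfloor\frac{n-1}{n+1-d}\rfloor+1$, the base interval supplies balanced curves for all integers in $[L,U]$. Given $e>U$, I would subtract copies of $e_0$ until landing at some $e_1\in[L,U]$ and then glue back the corresponding multiple of $C_0$; by Lemma \ref{gluing_balanced_curves} the result is balanced of degree $e$. This succeeds for every $e>U$ provided the base interval contains $e_0$ consecutive integers, i.e. provided the width inequality $U-L+1\ge e_0$ holds. When $n\ge 2d-2$ this is clean: then $U=n$, the degree $n-1$ curve has integer slope $n+1-d\ge 2$ so one may take $e_0=n-1$, and the estimate $\frac{n-1}{n+1-d}<2$ (equivalent to $n\ge 2d-2$) gives $L=2$ and hence $U-L+1=n-1=e_0$, closing the argument.

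The genuinely delicate regime, which I expect to be the main obstacle, is small Fano index $n+1-d$ — that is, $d$ close to $n$, and especially $d=n$ — where the smallest admissible $e_0$ is large relative to the base interval and the single progression $e_0\mathbb{Z}$ fails to meet every residue class below it. Concretely, for $d=n$ one has $e_0=2n-2$ while the base interval $[n,2n-2]$ has width only $n-1$, so degrees in the ``missed'' residue classes just above $U$ are not reached by a single building block. There I would have to enlarge the supply of perfectly balanced curves beyond the multiples of one $e_0$: either by feeding in the explicit perfectly balanced examples coming from Theorems \ref{theorem_quadrics}, \ref{theorem_cubics}, \ref{theorem_quartics} and \ref{theorem_higher_degree}, or by a direct smoothing analysis of a specific glued curve whose general deformation is checked to be balanced by hand (via the cohomology vanishing underlying Lemma \ref{gluing_balanced_curves}) rather than through its generic bound. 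Securing enough blocks to cover the finitely many exceptional residue classes in each low-index case is the crux that makes the statement hold for \emph{every} $e>\frac{n-1}{n+1-d}$.
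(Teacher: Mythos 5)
Your overall route is the paper's: the base range $\frac{n-1}{n+1-d} < e \le \max\{2d-2,n\}$ comes from Corollary \ref{Corollary_Section_3} and Theorem \ref{theorem_higher_degree}, and higher degrees are reached by gluing a perfectly balanced block and smoothing, via Lemma \ref{gluing_balanced_curves}. Where you diverge is in the analysis of the block, and there you both over- and under-shoot. The paper takes the block of degree $e_0 = n-1$ uniformly: since $\deg T_X|_{C_1} = (n-1)(n+1-d)$, the slope is the integer $n+1-d$, so balanced automatically means perfectly balanced and no divisibility search is needed; and your width criterion $U-L+1 \ge e_0$ with $e_0 = n-1$ is exactly the paper's displayed inequality $\max\{2d-2,n\} \ge \left\lfloor\frac{n-1}{n+1-d}\right\rfloor + (n-1)$, which it verifies in the two regimes $d < \frac{n+3}{2}$ (where the max is $n$ --- your ``clean'' case) and $\frac{n+3}{2}\le d\le n$ (where the max is $2d-2$). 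Consequently most of the regime you flag as delicate is not delicate at all: for every $3\le d\le n-1$ the degree $n-1$ curve lies in the base range (since $n-1 > \frac{n-1}{n+1-d}$ precisely when $n+1-d\ge 2$), its slope $n+1-d\ge 2$ is compatible with the subsheaf $\cO(2)\cong T_{\P^1}$, and the width inequality holds, so your subtraction argument closes verbatim with this one block. Your restriction to $n\ge 2d-2$ came only from insisting that $L=2$, which is unnecessary.

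The genuine gap in your write-up is the single value $d=n$, and there your proposal stops at suggestions rather than a proof: as you correctly compute, perfect balancedness forces $(n-1)\mid e$ and slope $\ge 2$, so the smallest block has degree $2n-2$, while the base $[n,2n-2]$ contains only $n-1$ consecutive integers; hence degrees congruent to $1,\ldots,n-1$ modulo $2n-2$ beyond the base (e.g.\ $e=2n-1$) are not reached, and the remedies you list do not obviously work --- the explicit examples of Theorems \ref{theorem_quadrics}, \ref{theorem_cubics}, \ref{theorem_quartics} and \ref{theorem_higher_degree} all concern rational normal curves with $e\le n$, so they add nothing beyond the base, and the ``direct smoothing analysis'' is left unexecuted. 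You should know, however, that your diagnosis is sharper here than the paper's own treatment: the paper invokes the degree $n-1$ perfectly balanced block for all $3\le d\le n$, but at $d=n$ that block would have $T_X|_{C_1}\cong \cO(1)^{n-1}$, which is impossible by the paper's own Proposition \ref{tangent_degree_less_than_2} (equivalently, $e=n-1$ equals the excluded bound $\frac{n-1}{n+1-d}$). So once you drop the superfluous hypothesis $n\ge 2d-2$, your proposal reproduces the paper's proof for all $3\le d\le n-1$; at $d=n$ both your proposal and the paper's argument as written leave those residue classes unproven, and closing them requires a new ingredient (additional perfectly balanced blocks in other degrees, or a finer analysis of the gluing data in Lemma \ref{gluing_balanced_curves} for a balanced-plus-nearly-balanced pair), which neither text supplies.
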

\begin{proof}
    First, notice that:
    \begin{itemize}[itemsep = 2pt]
        \item $2d-2\ge \left \lfloor\frac{n-1}{n+1-d} \right \rfloor + (n-1)$ for $\frac{n+3}{2}\le d\le n$, and
        \item $n\ge \left \lfloor\frac{n-1}{n+1-d} \right \rfloor + (n-1)$ for $d < \frac{n+3}{2}$.
    \end{itemize}
    Then, $\max \{2d-2,n\}\ge \left \lfloor \frac{n-1}{n+1-d} \right \rfloor + (n-1)$. Hence, $X$ contains rational curves with balanced restricted tangent bundle for every degree $\left \lfloor \frac{n-1}{n+1-d} \right \rfloor < e\le \left \lfloor \frac{n-1}{n+1-d}\right \rfloor + (n-1)$.
    
    Now, let $C_1$ be a rational curve in $X$ of degree $n-1$ with perfectly balanced restricted tangent bundle $T_X|_{C_1}\cong \cO(n+1-d)^{n-1}$ and $C_2$ be a rational curve of degree $e$ with balanced restricted tangent bundle $T_X|_{C_2}$. Since they are balanced, they are both free, then $C_1\cup C_2$ smooths into a degree $e+(n-1)$ rational curve $C$. By Lemma \ref{gluing_balanced_curves}, the general deformation of $C$ has balanced restricted tangent bundle. By gluing $m$ curves $C_1$, we get curves $C$ of degrees $e+m(n-1)$ for every integer $m\ge 0$. Since we have every $\frac{n-1}{n+1-d} < e\le \frac{n-1}{n+1-d} + (n-1)$, this gives us all degrees $e > \frac{n-1}{n+1-d}$.
\end{proof}

\printbibliography[title={References}]

\end{document}